\documentclass[10pt,reqno]{amsart}
\usepackage{latexsym,amsmath,amssymb,amscd}
\usepackage[all]{xy}

\def\today{\ifcase \month \or
   January \or February \or March \or April \or
   May \or June \or July \or August \or
   September \or October \or November \or December \fi
   \space\number\day , \number\year}
   

\setcounter{tocdepth}{1} 
\makeatletter
  \newcommand\@dotsep{4.5}
  \def\@tocline#1#2#3#4#5#6#7{\relax
     \ifnum #1>\c@tocdepth 
     \else
     \par \addpenalty\@secpenalty\addvspace{#2}%
     \begingroup \hyphenpenalty\@M
     \@ifempty{#4}{%
     \@tempdima\csname r@tocindent\number#1\endcsname\relax
        }{%
         \@tempdima#4\relax
           }%
      \parindent\z@ \leftskip#3\relax \advance\leftskip\@tempdima\relax
      \rightskip\@pnumwidth plus1em \parfillskip-\@pnumwidth
       #5\leavevmode\hskip-\@tempdima #6\relax
       \leaders\hbox{$\m@th
       \mkern \@dotsep mu\hbox{.}\mkern \@dotsep mu$}\hfill
       \hbox to\@pnumwidth{\@tocpagenum{#7}}\par
       \nobreak
        \endgroup
         \fi}
\makeatother 

\begin{document}

\makeatletter
\@addtoreset{figure}{section}
\def\thefigure{\thesection.\@arabic\c@figure}
\def\fps@figure{h,t}
\@addtoreset{table}{bsection}

\def\thetable{\thesection.\@arabic\c@table}
\def\fps@table{h, t}
\@addtoreset{equation}{section}
\def\theequation{\arabic{equation}}
\makeatother

\newcommand{\bfi}{\bfseries\itshape}

\newtheorem{theorem}{Theorem}
\newtheorem{corollary}[theorem]{Corollary}
\newtheorem{definition}[theorem]{Definition}
\newtheorem{example}[theorem]{Example}
\newtheorem{lemma}[theorem]{Lemma}
\newtheorem{notation}[theorem]{Notation}
\newtheorem{proposition}[theorem]{Proposition}
\newtheorem{remark}[theorem]{Remark}
\newtheorem{setting}[theorem]{Setting}

\numberwithin{theorem}{section}
\numberwithin{equation}{section}

\renewcommand{\1}{{\bf 1}}
\newcommand{\Ad}{{\rm Ad}}
\newcommand{\Alg}{{\rm Alg}\,}
\newcommand{\Aut}{{\rm Aut}\,}
\newcommand{\ad}{{\rm ad}}
\newcommand{\Borel}{{\rm Borel}}
\newcommand{\botimes}{\bar{\otimes}}
\newcommand{\Cb}{{\mathcal C}_{\rm b}}
\newcommand{\Ci}{{\mathcal C}^\infty}
\newcommand{\Cpol}{{\mathcal C}^\infty_{\rm pol}}
\newcommand{\Der}{{\rm Der}\,}
\newcommand{\de}{{\rm d}}
\newcommand{\ee}{{\rm e}}
\newcommand{\End}{{\rm End}\,}
\newcommand{\ev}{{\rm ev}}
\newcommand{\hotimes}{\widehat{\otimes}}
\newcommand{\id}{{\rm id}}
\newcommand{\ie}{{\rm i}}
\newcommand{\iotaR}{\iota^{\rm R}}
\newcommand{\GL}{{\rm GL}}
\newcommand{\gl}{{{\mathfrak g}{\mathfrak l}}}
\newcommand{\Hom}{{\rm Hom}\,}
\newcommand{\Img}{{\rm Im}\,}
\newcommand{\Ind}{{\rm Ind}}
\newcommand{\Ker}{{\rm Ker}\,}
\newcommand{\Lie}{\text{\bf L}}
\newcommand{\m}{\text{\bf m}}
\newcommand{\pr}{{\rm pr}}
\newcommand{\Ran}{{\rm Ran}\,}
\renewcommand{\Re}{{\rm Re}\,}
\newcommand{\so}{\text{so}}
\newcommand{\spa}{{\rm span}}
\newcommand{\Tr}{{\rm Tr}\,}
\newcommand{\Op}{{\rm Op}}
\newcommand{\U}{{\rm U}}

\newcommand{\CC}{{\mathbb C}}
\newcommand{\RR}{{\mathbb R}}
\newcommand{\TT}{{\mathbb T}}

\newcommand{\Ac}{{\mathcal A}}
\newcommand{\Bc}{{\mathcal B}}
\newcommand{\Cc}{{\mathcal C}}
\newcommand{\Dc}{{\mathcal D}}
\newcommand{\Ec}{{\mathcal E}}
\newcommand{\Fc}{{\mathcal F}}
\newcommand{\Hc}{{\mathcal H}}
\newcommand{\Jc}{{\mathcal J}}
\newcommand{\Lc}{{\mathcal L}}
\renewcommand{\Mc}{{\mathcal M}}
\newcommand{\Nc}{{\mathcal N}}
\newcommand{\Oc}{{\mathcal O}}
\newcommand{\Pc}{{\mathcal P}}
\newcommand{\Rc}{{\mathcal R}}
\newcommand{\Sc}{{\mathcal S}}
\newcommand{\Tc}{{\mathcal T}}
\newcommand{\Vc}{{\mathcal V}}
\newcommand{\Uc}{{\mathcal U}}
\newcommand{\Xc}{{\mathcal X}}
\newcommand{\Yc}{{\mathcal Y}}
\newcommand{\Wig}{{\mathcal W}}

\newcommand{\Bg}{{\mathfrak B}}
\newcommand{\Fg}{{\mathfrak F}}
\newcommand{\Gg}{{\mathfrak G}}
\newcommand{\Ig}{{\mathfrak I}}
\newcommand{\Jg}{{\mathfrak J}}
\newcommand{\Lg}{{\mathfrak L}}
\newcommand{\Pg}{{\mathfrak P}}
\newcommand{\Sg}{{\mathfrak S}}
\newcommand{\Xg}{{\mathfrak X}}
\newcommand{\Yg}{{\mathfrak Y}}
\newcommand{\Zg}{{\mathfrak Z}}

\newcommand{\ag}{{\mathfrak a}}
\newcommand{\bg}{{\mathfrak b}}
\newcommand{\dg}{{\mathfrak d}}
\renewcommand{\gg}{{\mathfrak g}}
\newcommand{\hg}{{\mathfrak h}}
\newcommand{\kg}{{\mathfrak k}}
\newcommand{\mg}{{\mathfrak m}}
\newcommand{\n}{{\mathfrak n}}
\newcommand{\og}{{\mathfrak o}}
\newcommand{\pg}{{\mathfrak p}}
\newcommand{\sg}{{\mathfrak s}}
\newcommand{\tg}{{\mathfrak t}}
\newcommand{\ug}{{\mathfrak u}}
\newcommand{\zg}{{\mathfrak z}}

\newcommand{\ZZ}{\mathbb Z}
\newcommand{\NN}{\mathbb N}
\newcommand{\BB}{\mathbb B}

\newcommand{\ep}{\varepsilon}

\newcommand{\hake}[1]{\langle #1 \rangle }

\newcommand{\scalar}[2]{\langle #1 ,#2 \rangle }
\newcommand{\vect}[2]{(#1_1 ,\ldots ,#1_{#2})}
\newcommand{\norm}[1]{\Vert #1 \Vert }
\newcommand{\normrum}[2]{{\norm {#1}}_{#2}}

\newcommand{\upp}[1]{^{(#1)}}
\newcommand{\p}{\partial}

\newcommand{\opn}{\operatorname}
\newcommand{\slim}{\operatornamewithlimits{s-lim\,}}
\newcommand{\sgn}{\operatorname{sgn}}

\newcommand{\seq}[2]{#1_1 ,\dots ,#1_{#2} }
\newcommand{\loc}{_{\opn{loc}}}

\makeatletter
\title[Continuity of  magnetic Weyl calculus]{Continuity of magnetic Weyl calculus}
\author{Ingrid Belti\c t\u a 
and Daniel Belti\c t\u a
}
\address{Institute of Mathematics ``Simion Stoilow'' 
of the Romanian Academy, 
P.O. Box 1-764, Bucharest, Romania}
\email{Ingrid.Beltita@imar.ro}
\email{Daniel.Beltita@imar.ro}
\keywords{Weyl calculus; magnetic field; Lie group; modulation spaces}
\subjclass[2000]{Primary 81S30; Secondary 22E25, 22E65, 35S05, 47G30}
\date{June 11, 2010}
\makeatother

\begin{abstract} 
We investigate continuity properties of the operators obtained by 
the magnetic Weyl calculus on nilpotent Lie groups, using modulation spaces  associated with 
unitary representations of certain infinite-dimensional Lie groups. 
\end{abstract}

\maketitle


\section{Introduction}

There are three main themes that occur in the present paper: 
\begin{itemize} 
\item[-] The theory of locally convex Lie groups and their representations, recently surveyed in \cite{Ne06}. See also \cite{Ne10}.
\item[-] The pseudo-differential Weyl calculus that takes into account a magnetic field on $\RR^n$, which has been recently developed by techniques of hard analysis, with motivation coming from quantum mechanics; some references in this connection include \cite{MP04}, \cite{IMP07}, and \cite{MP09}. 
\item[-] The modulation spaces from the time-frequency analysis, which have become an increasingly useful tool in the classical pseudo-differential calculus on $\RR^n$; see for instance the seminal paper \cite{GH99}.
\end{itemize}

It follows by our earlier papers \cite{BB09a} and \cite{BB10a} 
that the first two of the above themes are closely related, in the sense that some of the very basic ideas of infinite-dimensional Lie theory 
prove to be very useful for understanding the aforementioned magnetic Weyl calculus as a Weyl quantization of a certain coadjoint orbit of 
a semi-direct product group $M=\Fc\rtimes\RR^n$. 
Here $\Fc$ is a suitable translation-invariant space of smooth functions on $\RR^n$ and the coadjoint orbit is associated 
with a natural unitary representation of $M$ on $L^2(\RR^n)$. 
This representation theoretic approach to the magnetic Weyl calculus is further developed in the present paper by using the third of the themes mentioned above. 
Specifically, we introduce appropriate versions of modulation spaces and use them for describing the continuity properties of the magnetic pseudo-differential operators. 

We recall from \cite{BB09a} that our approach to the magnetic Weyl calculus actually allows us to extend the constructions of \cite{MP04} from the abelian group $(\RR^n,+)$ to any simply connected nilpotent Lie group, 
and this will also be the setting of some of the main results of the present paper. 
However, the proofs are greatly helped by a more general framework that we develop, in the first sections of the paper, for the so-called localized Weyl calculus for representations of 
locally convex Lie groups that satisfy suitable smoothness conditions. 
In order to develop this abstract setting we provide infinite-dimensional extensions of some ideas 
and constructions related to irreducible representations of finite-dimensional nilpotent Lie groups, which we had developed in \cite{BB09c} (see also \cite{BB09b}). 
These extensions may also be interesting on their own, 
however their importance consists in pointing out that the magnetic Weyl calculus of \cite{MP04} and the Weyl-Pedersen calculus initiated in \cite{Pe94} are merely different shapes of the same phenomenon. 

The structure of the paper can be seen form the following table of contents:
\begin{itemize}
\item[\S 1.] Introduction.
\item[\S 2.] Smooth unitary representations of locally convex Lie groups.
\item[\S 3.]  Localized Weyl calculus and modulation spaces.
\item[\S 4.]  Applications to the magnetic Weyl calculus.
\end{itemize}

The aim of sections~2 and~3 is to give general conditions on representations of locally convex Lie groups that ensure good properties of a Weyl calculus and related objects, as Wigner distributions and modulation spaces. 
In fact, in this way we set up a rather general procedure for proving continuity of the operators obtained by the Weyl calculus, and of the Weyl calculus itself. A special case of this procedure, that motivated the present paper, appeared in our earlier work \cite{BB09c} on Weyl-Pedersen calculus for irreducible representations of finite-dimensional nilpotent Lie groups. The developments in this paper allow us to treat the magnetic Weyl calculus as a particular case. In Section~4 we show that the conditions in sections~2 and~3  are met in this case, and 
continuity/trace-class results are thus derived.

\subsection*{Notation}
Throughout the paper we denote by $\Sc(\Vc)$ the Schwartz space 
on a finite-dimensional real vector space~$\Vc$. 
That is, $\Sc(\Vc)$ is the set of all smooth functions 
that decay faster than any polynomial together with 
their partial derivatives of arbitrary order. 
Its topological dual ---the space of tempered distributions on $\Vc$--- 
is denoted by $\Sc'(\Vc)$. 
We use the notation $\Cpol(\Vc)$ for the space 
of smooth functions that grow polynomially together with 
their partial derivatives of arbitrary order; 
the natural locally convex topology of this function space along with some of its special properties are discussed in \cite{Ro75}. 

For every complex vector space $\Yc$ we denote by $\overline{\Yc}$ 
the complex vector space defined by the conditions that $\Yc$ and $\overline{\Yc}$ have the same underlying real vector space, 
and the identity mapping $\Yc\to\overline{\Yc}$ is antilinear. 
If $\Yc$ is a topological vector space, then $\Yc'$ will always denote the weak topological dual of $\Yc$, that is, the space of continuous linear functionals on $\Yc$ endowed with the topology of uniform convergence on the compact subsets.  

We shall always denote by $\cdot\hotimes\cdot$ the completed projective tensor product of locally convex spaces and by $\cdot\botimes\cdot$ the natural tensor product of Hilbert spaces. 
Our references for topological tensor products are \cite{Do74}, \cite{Sch66}, and \cite{Tr67}. 

We shall also use the convention that the Lie groups are denoted by 
upper case Latin letters and the Lie algebras are denoted 
by the corresponding lower case Gothic letters. 

\section{Smooth unitary representations of locally convex Lie groups}\label{Sect2}

Let $M$ be a locally convex Lie group with 
a smooth exponential mapping 
$$\exp_M\colon\Lie(M)=\mg\to M$$ 
(see \cite{Ne06}). 
Assume that $\pi\colon M\to\Bc(\Hc)$ is a unitary representation. 
We denote by $\Hc_\infty$ the space of \emph{smooth vectors} for 
the representation~$\pi$, that is, 
$$\Hc_\infty:=\{\phi\in\Hc\mid\pi(\cdot)\phi\in\Ci(M,\Hc)\}. $$
We note that $\pi(M)\Hc_\infty=\Hc_\infty$ and, 
as proved in \cite[Sect.~IV]{Ne01}, 
the derived representation $\de\pi\colon\mg\to\End(\Hc_\infty)$ 
is well defined and is 
given by 
$$(\forall X\in\mg)(\forall\phi\in\Hc_\infty)\quad
\de\pi(X)\phi=\frac{\de}{\de t}\Big\vert_{t=0}\pi(\exp_M(tX))\phi. $$

\begin{remark}\label{smooth_top}
\normalfont
If we denote by $\U(\mg_{\CC})$ the universal enveloping algebra 
of the complexified Lie algebra $\mg_{\CC}$, 
then the homomorphism of Lie algebras $\de\pi$ extends to a unique homomorphism of unital associative algebras $\de\pi\colon\U(\mg_{\CC})\to\End(\Hc_\infty)$. 
The space of smooth vectors $\Hc_\infty$ will always be considered endowed with the locally convex topology defined by the family of seminorms $\{p_u\}_{u\in\U(\mg_{\CC})}$, 
where for every $u\in\U(\mg_{\CC})$ we define 
$$p_u\colon\Hc_\infty\to[0,\infty),\quad 
p_u(\phi)=\Vert\de\pi(u)\phi\Vert. $$
The inclusion mapping $\Hc_\infty\hookrightarrow\Hc$ 
is continuous and, for all $u\in\U(\mg_{\CC})$ and $m\in M$, 
the linear operators $\de\pi(u)\colon\Hc_\infty\to\Hc_\infty$ and $\pi(m)\colon\Hc_\infty\to\Hc_\infty$ 
are continuous as well. 
\qed
\end{remark}

\begin{definition}\label{smoothness}
\normalfont 
Assume the above setting. 

If the linear subspace of smooth vectors  $\Hc_\infty$ is dense in $\Hc$, 
then the unitary 
representation $\pi\colon M\to\Bc(\Hc)$ is said to be \emph{smooth}.  
If this is the case, then $\pi$ is necessarily continuous, 
in the sense that the group action 
$M\times\Hc\to\Hc$, $(m,f)\mapsto\pi(m)f$, is continuous. 

The representation $\pi$ is said to be \emph{nuclearly smooth} 
if the following conditions are satisfied: 
\begin{enumerate} 
\item\label{smoothness_item1} 
$\pi$ is a smooth representation; 
\item\label{smoothness_item2} 
$\Hc_\infty$ is a nuclear Fr\'echet space; 
\item\label{smoothness_item3} 
both mappings 
$M\times\Hc_\infty\to\Hc_\infty$, $(m,\phi)\mapsto\pi(m)\phi$, 
and $\mg\times\Hc_\infty\to\Hc_\infty$, $(X,\phi)\mapsto\de\pi(X)\phi$ 
are continuous. 
\end{enumerate}
Let $\Bc(\Hc)_\infty$ be the space of 
smooth vectors for the unitary representation 
$$\pi\otimes\bar\pi\colon M\times M\to\Bc(\Sg_2(\Hc)),\quad 
(\pi\otimes\bar\pi)(m_1,m_2)T=\pi(m_1)T\pi(m_2)^{-1}.$$ 

We shall say that the representation $\pi\colon M\to\Bc(\Hc)$ 
is \emph{twice nuclearly smooth} if 
it satisfies the following conditions: 
\begin{enumerate}
\item The representation $\pi$ is nuclearly smooth.  
\item There exists the commutative diagram 
\begin{equation}\label{smoothness_eq1}
\xymatrix{\Hc_\infty\hotimes\overline{\Hc_\infty}
\ar @{^{(}->}[r] \ar[d]
& \Hc\botimes\overline{\Hc} \ar[d] \\
\Bc(\Hc)_\infty \ar @{^{(}->}[r] & \Sg_2(\Hc)
}
\end{equation}
where the vertical arrow on the left is a linear topological isomorphism, 
while the vertical arrow on the right is the natural unitary operator 
defined by the condition 
$(\phi_1,\phi_2)\mapsto\phi_1\otimes\bar\phi_2:=(\cdot\mid\phi_2)\phi_1$. 
\end{enumerate}
\qed
\end{definition}

\begin{remark}
\normalfont
Note that there can exist at most one Fr\'echet topology on $\Hc_\infty$ 
such that the inclusion $\Hc_\infty\hookrightarrow\Hc$ be continuous, 
as a direct consequence of the closed graph theorem. 
\qed
\end{remark}
 
\begin{remark}
\normalfont 
Let $\pi$ be a smooth representation and denote by $\Hc_{-\infty}$ 
the strong dual of $\overline{\Hc_\infty}$. 
Equivalently, $\Hc_{-\infty}$ can be described as the space of continuous antilinear functionals on $\Hc_\infty$ endowed with the topology of uniform convergence on the bounded subsets of $\Hc_\infty$.  
Then there exist the dense embeddings $$\Hc_\infty\hookrightarrow\Hc\hookrightarrow\Hc_{-\infty},$$  
and the duality pairing $(\cdot\mid\cdot)\colon\Hc_{-\infty}\times\Hc_\infty\to\CC$ 
extends the scalar product of~$\Hc$. 
\qed
\end{remark}

\begin{proposition}\label{twice}
If the unitary representation $\pi\colon M\to\Bc(\Hc)$ is twice nuclearly smooth, 
then it also has the following properties: 
\begin{enumerate}
\item\label{twice_item1} 
The representation $\pi\otimes\bar\pi\colon M\times M\to\Bc(\Sg_2(\Hc))$ 
is nuclearly smooth. 
\item\label{twice_item2} 
We have 
$\Lc(\Hc_{-\infty},\Hc_\infty)\simeq\Bc(\Hc)_\infty\hookrightarrow\Sg_1(\Hc)$ 
and there exists the commutative diagram 
$$\xymatrix{\Bc(\Hc) 
\ar @{^{(}->}[r] \ar[d]
& \Lc(\Hc_\infty,\Hc_{-\infty}) \ar[d] \\
\Sg_1(\Hc)' \ar @{^{(}->}[r] & \Lc(\Hc_{-\infty},\Hc_\infty)'
}
$$
where the vertical arrow on the left is the natural linear topological isomorphism defined by the trace duality, 
and the vertical arrow on the right is also a linear topological isomorphism. 
\end{enumerate}
\end{proposition}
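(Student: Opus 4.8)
The plan is to derive everything from the defining data of twice nuclear smoothness --- the commutative diagram \eqref{smoothness_eq1} and nuclear smoothness of $\pi$ --- by transporting structure along the two vertical isomorphisms and exploiting standard permanence properties of nuclear Fr\'echet spaces.

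For item \eqref{twice_item1}, first I would observe that $\Bc(\Hc)_\infty$, being by the left vertical arrow of \eqref{smoothness_eq1} linearly topologically isomorphic to $\Hc_\infty\hotimes\overline{\Hc_\infty}$, is a nuclear Fr\'echet space: the completed projective tensor product of two nuclear Fr\'echet spaces is again a nuclear Fr\'echet space. This gives condition \eqref{smoothness_item2} of nuclear smoothness for $\pi\otimes\bar\pi$. Density of $\Bc(\Hc)_\infty$ in $\Sg_2(\Hc)$ --- condition \eqref{smoothness_item1} --- follows because the image of $\Hc_\infty\hotimes\overline{\Hc_\infty}$ contains all finite-rank operators $\phi_1\otimes\bar\phi_2$ with $\phi_j\in\Hc_\infty$, and these are dense in $\Sg_2(\Hc)$ since $\Hc_\infty$ is dense in $\Hc$. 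For the joint continuity condition \eqref{smoothness_item3}, I would use that $\pi\otimes\bar\pi$ acts on $\Hc_\infty\hotimes\overline{\Hc_\infty}$ as $\pi\hotimes\bar\pi$ on the level of elementary tensors, and that the tensor product of two jointly continuous actions (here on $\Hc_\infty$ and on $\overline{\Hc_\infty}$, available from nuclear smoothness of $\pi$) is jointly continuous on the completed projective tensor product; the same argument applied to $\de(\pi\otimes\bar\pi)(X,Y)=\de\pi(X)\otimes\id+\id\otimes\overline{\de\pi(Y)}$ handles the Lie-algebra action.

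For item \eqref{twice_item2}, the identification $\Lc(\Hc_{-\infty},\Hc_\infty)\simeq\Bc(\Hc)_\infty$ comes from the kernel theorem: since $\Hc_\infty$ is nuclear Fr\'echet, $\Lc(\Hc_{-\infty},\Hc_\infty)\simeq\Hc_\infty\hotimes(\Hc_{-\infty})'\simeq\Hc_\infty\hotimes\overline{\Hc_\infty}$, the latter being $\Bc(\Hc)_\infty$ by \eqref{smoothness_eq1}. The embedding $\Bc(\Hc)_\infty\hookrightarrow\Sg_1(\Hc)$ into trace-class operators I would get by factoring: an operator in $\Lc(\Hc_{-\infty},\Hc_\infty)$ factors through the Hilbert–Schmidt embeddings $\Hc_\infty\hookrightarrow\Hc\hookrightarrow\Hc_{-\infty}$, and a composition of two Hilbert–Schmidt operators is trace-class; continuity of this embedding follows from the closed graph theorem once the set-theoretic inclusion is established. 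For the commutative diagram, the left vertical arrow is the classical trace-duality isomorphism $\Bc(\Hc)\simeq\Sg_1(\Hc)'$; the bottom embedding $\Sg_1(\Hc)'\hookrightarrow\Lc(\Hc_{-\infty},\Hc_\infty)'$ is the transpose of the embedding just constructed, and the right vertical arrow $\Lc(\Hc_\infty,\Hc_{-\infty})\to\Lc(\Hc_{-\infty},\Hc_\infty)'$ is again a trace-duality-type pairing: a kernel in $\Lc(\Hc_\infty,\Hc_{-\infty})\simeq\Hc_{-\infty}\hotimes\overline{\Hc_{-\infty}}$ pairs with a kernel in $\Hc_\infty\hotimes\overline{\Hc_\infty}$ by the duality $\Hc_{-\infty}=(\overline{\Hc_\infty})'$ in each tensor factor. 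Commutativity of the square then reduces to checking agreement on rank-one operators $\phi_1\otimes\bar\phi_2$, where both routes give the bilinear form $(T\mid\phi_1\otimes\bar\phi_2)=(T\phi_2\mid\phi_1)$; density and continuity extend this to the whole space. That the right vertical arrow is a topological isomorphism (not merely injective) I would obtain from reflexivity: since $\Lc(\Hc_{-\infty},\Hc_\infty)$ is nuclear Fr\'echet, hence reflexive, its strong dual's dual recovers it, and the kernel picture identifies $\Lc(\Hc_{-\infty},\Hc_\infty)'$ with $\Lc(\Hc_\infty,\Hc_{-\infty})$.

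The main obstacle I expect is not any single step but keeping the topologies aligned: one must verify that the kernel-theorem isomorphisms, the Hilbert–Schmidt factorizations, and the trace duality are compatible as \emph{topological} (not merely algebraic) isomorphisms, and in particular that the various dual spaces are given their correct strong topologies so that the closed graph and open mapping theorems apply. The nuclear Fr\'echet hypothesis is exactly what makes all these identifications automatic, so the proof is essentially an exercise in invoking the right permanence properties in the right order; the delicate point is the final claim that the right vertical arrow is surjective, which genuinely uses reflexivity of $\Lc(\Hc_{-\infty},\Hc_\infty)$.
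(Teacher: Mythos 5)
Your proposal is correct and follows essentially the same route as the paper: both parts are obtained by transporting standard tensor-product and kernel-theorem identifications for nuclear Fr\'echet spaces (Tr\`eves, Eqs.~(50.16), (50.17), (50.19)) through the isomorphism $\Bc(\Hc)_\infty\simeq\Hc_\infty\hotimes\overline{\Hc_\infty}$ furnished by the twice-nuclearly-smooth hypothesis, and then dualizing. One small caution: the phrase ``Hilbert--Schmidt embeddings $\Hc_\infty\hookrightarrow\Hc\hookrightarrow\Hc_{-\infty}$'' is not literally meaningful since $\Hc_\infty$ and $\Hc_{-\infty}$ are not Hilbert spaces; the rigorous version of your factorization is that nuclearity of $\Hc_\infty$ makes the induced map $\Hc_\infty\hotimes\overline{\Hc_\infty}\to\Hc\hotimes\overline{\Hc}\simeq\Sg_1(\Hc)$ continuous (equivalently, the inclusion factors through Hilbert spaces with a Hilbert--Schmidt link), which is in substance what the paper invokes by citing [BB10b, Th.~3.3] and the smoothing property $T\Hc\subseteq\Hc_\infty$.
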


\begin{proof}
\eqref{twice_item1} 
The representation $\pi$ is twice nuclearly smooth, 
hence 
$\Hc_\infty$ is a nuclear Fr\'echet space and $\Hc_\infty\hotimes\overline{\Hc_\infty}\simeq\Bc(\Hc)_\infty$. 
Then $\Bc(\Hc)_\infty$ is in turn a nuclear Fr\'echet space 
(see for instance \cite[Prop.~50.1 and Prop.~50.6]{Tr67}).  
Moreover, since $\Hc_\infty$ is dense in $\Hc$, 
it follows that $\Bc(\Hc)_\infty$ is dense in $\Sg_2(\Hc)$. 
To complete the proof of the fact that $\pi\otimes\bar\pi$ is twice nuclearly smooth, we still have to check that the mappings 
$$M\times M\times\Bc(\Hc)_\infty\to\Bc(\Hc)_\infty, \quad 
(m_1,m_2,T)\mapsto\pi(m_1)T\pi(m_2)^{-1}$$
and 
$$\mg\times \mg\times\Bc(\Hc)_\infty\to\Bc(\Hc)_\infty, \quad 
(X_1,X_2,T)\mapsto\de\pi(X_1)T-T\de\pi(X_2)$$
are continuous. 
To this end use again the fact that $\Hc_\infty\hotimes\overline{\Hc_\infty}\simeq\Bc(\Hc)_\infty$ 
and both mappings 
$M\times\Hc_\infty\to\Hc_\infty$, $(m,\phi)\mapsto\pi(m)\phi$, 
and $\mg\times\Hc_\infty\to\Hc_\infty$, $(X,\phi)\mapsto\de\pi(X)\phi$ 
are continuous. 

\eqref{twice_item2} 
Since $\Hc_\infty$ is a nuclear Fr\'echet space, we get 
$$\Lc(\Hc_{-\infty},\Hc_\infty)=\Lc(\overline{\Hc_\infty}',\Hc_\infty)
\simeq \Hc_\infty\hotimes\overline{\Hc_\infty}\simeq\Bc(\Hc)_\infty$$ 
(see \cite[Eq.~(50.17)]{Tr67}). 

Moreover, for every $T\in\Bc(\Hc)_\infty$ we have 
$T\Hc\subseteq\Hc_\infty$. 
Therefore one can prove (as in \cite[Th.~3.3]{BB10b}, for instance) 
that $\Bc(\Hc)_\infty\subseteq\Sg_1(\Hc)$. 
Moreover, by considering the duals of the above topological linear isomorphisms, 
we get 
$$\Lc(\Hc_{-\infty},\Hc_\infty)'\simeq(\Hc_\infty\hotimes\overline{\Hc_\infty})' 
\simeq\Lc(\Hc_\infty,\overline{\Hc_\infty}')
\simeq\Lc(\Hc_\infty,\Hc_{-\infty})$$
(see \cite[Eqs.~(50.19) and (50.16)]{Tr67}), 
and these isomorphisms agree with the isomorphism 
$\Sg_1(\Hc)'\simeq\Bc(\Hc)$ in the sense of the commutative diagram in the statement. 
\end{proof}

\begin{remark}\label{loc6}
\normalfont
For every $f_1,f_2\in\Hc$ we denote by $f_1\otimes\bar f_2\in\Bc(\Hc)$ 
the rank-one operator $f\mapsto(f\mid f_2)f_1$. 
If the representation $\pi\otimes\bar\pi$ is twice nuclearly smooth, 
then for any $f_1,f_2\in\Hc_{-\infty}$ we can use Proposition~\ref{twice} 
to define 
the continuous antilinear functional  
$f_1\otimes\bar f_2\colon\Bc(\Hc)_\infty\to\CC$ 
by $(f_1\otimes\bar f_2)(T)=(f_1\mid Tf_2)$ for every $T\in\Bc(\Hc)_\infty$.
\qed
\end{remark}

\subsection*{Group square}

\begin{definition}\label{loc2}
\normalfont
The \emph{group square} of $M$, 
denoted by $M\ltimes M$, is the semi-direct product defined by 
the action of $M$ on itself by inner automorphisms. 
That is, $M\ltimes M$ is a locally convex Lie group whose underlying manifold is $M\times M$ and the group operation is  
$$(m_1,m_2)(n_1,n_2)=(m_1n_1, n_1^{-1}m_2n_1n_2)$$
for all $m_1,m_2,n_1,n_2\in M$. 
\qed
\end{definition}

\begin{lemma}\label{loc3}
The following assertions hold: 
\begin{enumerate}
\item\label{loc3_item1} 
The mapping 
$$\mu\colon M\ltimes M\to M\times M,\quad 
(m_1,m_2)\mapsto(m_1m_2,m_1)$$
is an isomorphism of Lie groups with tangent map 
$$\Lie(\mu)\colon\mg\ltimes\mg\to\mg\times\mg,\quad  
(X,Y)\mapsto(X+Y,X).$$ 
\item\label{loc3_item2} 
The Lie group $M\ltimes M$ has a smooth exponential map 
$$\exp_{M\ltimes M}\colon\mg\ltimes\mg\to M\ltimes M,\quad 
(X,Y)\mapsto(\exp_MX,\exp_M(-X)\exp_M(X+Y)).$$
\end{enumerate}
\end{lemma}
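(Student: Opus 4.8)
The statement of Lemma~\ref{loc3} has two parts: identifying the map $\mu$ as a Lie group isomorphism with the prescribed tangent map, and exhibiting a smooth exponential map for $M\ltimes M$. The plan is to prove part~\eqref{loc3_item1} by direct verification of the homomorphism property and bijectivity, then derive part~\eqref{loc3_item2} by transporting $\exp_{M\times M}=\exp_M\times\exp_M$ through $\mu^{-1}$.

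First I would verify that $\mu$ is a group homomorphism. Using the group law $(m_1,m_2)(n_1,n_2)=(m_1n_1,\,n_1^{-1}m_2n_1n_2)$ of Definition~\ref{loc2}, one computes $\mu\big((m_1,m_2)(n_1,n_2)\big)=\mu(m_1n_1,\,n_1^{-1}m_2n_1n_2)=(m_1n_1 n_1^{-1}m_2n_1n_2,\,m_1n_1)=(m_1m_2n_1n_2,\,m_1n_1)$, which is exactly $(m_1m_2,m_1)(n_1,n_2)=\mu(m_1,m_2)\mu(n_1,n_2)$ in the direct product $M\times M$. Since $M\ltimes M$ and $M\times M$ have the same underlying manifold $M\times M$ and $\mu(m_1,m_2)=(m_1m_2,m_1)$ is a composition of the smooth multiplication map and projections, $\mu$ is smooth; its inverse $(a,b)\mapsto(b,b^{-1}a)$ is smooth by the same reasoning, so $\mu$ is an isomorphism of locally convex Lie groups. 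The tangent map at the identity is then obtained by differentiating $\mu$ at $(e,e)$: writing curves $t\mapsto(\exp_M(tX),\exp_M(tY))$ and using $\frac{\de}{\de t}\big|_{0}\exp_M(tX)\exp_M(tY)=X+Y$ in $\mg$ together with projection onto the first factor, we get $\Lie(\mu)(X,Y)=(X+Y,X)$ as claimed. (Strictly, one should check that $\mg\ltimes\mg$ has underlying vector space $\mg\times\mg$ with the appropriate bracket, which is automatic from the semidirect-product construction at the Lie algebra level.)

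For part~\eqref{loc3_item2}, I would use the general fact that if $\phi\colon G\to H$ is an isomorphism of locally convex Lie groups and $H$ has a smooth exponential map $\exp_H$, then $G$ has the smooth exponential map $\exp_G=\phi^{-1}\circ\exp_H\circ\Lie(\phi)$. Here $H=M\times M$ has the smooth exponential map $\exp_{M\times M}(U,V)=(\exp_M U,\exp_M V)$ since $M$ does. Applying this with $\phi=\mu$, $\Lie(\mu)(X,Y)=(X+Y,X)$, and $\mu^{-1}(a,b)=(b,b^{-1}a)$, we compute
$$\exp_{M\ltimes M}(X,Y)=\mu^{-1}\big(\exp_M(X+Y),\exp_M X\big)=\big(\exp_M X,\ \exp_M(X)^{-1}\exp_M(X+Y)\big),$$
and $\exp_M(X)^{-1}=\exp_M(-X)$, giving precisely the formula in the statement. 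Smoothness of $\exp_{M\ltimes M}$ follows since it is a composite of smooth maps.

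The main obstacle here is not any single deep step but rather bookkeeping care: one must be consistent about which twisted product convention is in force (the action of $M$ on itself is by \emph{inner} automorphisms $n_1\cdot m_2=n_1^{-1}m_2n_1$ or the opposite, and the semidirect product on the Lie algebra side must match), and one must confirm that the abstract transport-of-structure lemma for exponential maps applies in the locally convex (not merely Banach or finite-dimensional) setting — this is standard in the framework of \cite{Ne06}, where $\exp_M$ being a smooth map of manifolds is exactly what is preserved under composition with diffeomorphisms. Once the conventions are pinned down, both parts are routine verifications.
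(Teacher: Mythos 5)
Your proof is correct and is essentially the approach one expects: verify directly that $\mu$ is a group homomorphism with smooth inverse, compute its tangent map, and transport $\exp_{M\times M}$ through $\mu^{-1}$ to obtain the formula for $\exp_{M\ltimes M}$. The paper itself does not spell out the computation — it simply refers to the arguments of Example~2.3 in \cite{BB09c} as carrying over to the locally convex setting — so your write-up is a self-contained version of the same argument; your closing remark about the transport-of-structure lemma being valid in the locally convex framework of \cite{Ne06} is precisely the point that justifies the paper's phrase ``carry over to the present setting.''
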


\begin{proof}
The arguments of Ex.~2.3 in \cite{BB09c} carry over to the present setting. 
\end{proof}

\begin{definition}\label{loc4}
\normalfont
We introduce the continuous unitary representation 
$$\pi^\ltimes\colon M\ltimes M\to\Bc(\Sg_2(\Hc)),\quad 
\pi^\ltimes(m_1,m_2)T=\pi(m_1m_2)T\pi(m_1)^{-1}.$$
To see that $\pi^\ltimes$ is a representation, one can use a direct computation 
or the fact that so is $\pi\otimes\bar\pi$ and we have 
\begin{equation}\label{loc4_eq1}
\pi^\ltimes=(\pi\otimes\bar\pi)\circ\mu,  
\end{equation}
where $\mu\colon M\ltimes M\to M\times M$ is the group isomorphism of Lemma~\ref{loc3}.
\qed
\end{definition}

\section{Localized Weyl calculus and modulation spaces}\label{localized}

The localized  Weyl calculus (see Definition~\ref{loc1} below) 
was introduced in \cite{BB09a} as a tool for dealing with
the magnetic Weyl calculus on nilpotent Lie groups. 
In the present section section we further develop that circle of ideas 
by introducing the modulation spaces 
and extending some related techniques of \cite{BB09c}   to 
the general framework provided by the localized Weyl calculus for 
representations of infinite-dimensional Lie groups. 

Here we single out fairly general conditions that allow for a Weyl calculus to be defined, 
modulation spaces to be considered and continuity properties in these spaces to hold. 
All of these conditions are satisfied in at least two important situations: the Weyl-Pedersen calculus for 
irreducible representations 
of finite dimensional nilpotent Lie groups (see \cite{BB09c}) and the magnetic Weyl calculus of \cite{BB09a} 
to be treated in the last section.

\subsection*{Ambiguity functions and Wigner distributions}

\begin{setting}\label{loc0}
\normalfont
Throughout this section we keep the following notation: 
\begin{enumerate}
\item $M$ is a locally convex Lie group (see \cite{Ne06}) with 
a smooth exponential mapping $\exp_M\colon\Lie(M)=\mg\to M$.
\item $\pi\colon M\to\Bc(\Hc)$ 
is a nuclearly smooth unitary representation. 
\item $\Xi$ and $\Xi^*$ are real finite-dimensional vector spaces 
with a duality pairing 
$\langle\cdot,\cdot\rangle\colon\Xi^*\times\Xi\to\RR$ 
and with Lebesgue measures on $\Xi$ and $\Xi^*$  
suitably normalized for the Fourier transform
$$\widehat{\cdot}
\colon L^1(\Xi)\to L^\infty(\Xi^*), \quad 
b(\cdot)\mapsto\widehat{b}(\cdot)
=\int\limits_{\Xi}\ee^{-\ie\langle\cdot,x\rangle}b(x)\,\de x $$
to give a unitary operator $L^2(\Xi)\to L^2(\Xi^*)$. 
The inverse of this transform will be denoted by $a\mapsto\check a$. 
\end{enumerate}
\qed
\end{setting}

\begin{definition}\label{loc_orthog}
\normalfont
Let $\theta\colon \Xi\to\mg$ be a linear mapping.  

(a) {\it Orthogonality relations.} 
If either $\phi\in\Hc_\infty$ and $f\in\Hc_{-\infty}$, 
or $\phi,f\in\Hc$, then we define the \emph{ambiguity function} along the mapping $\theta$, 
$$\Ac^{\pi,\theta}_\phi f\colon\Xi\to\CC,\quad 
(\Ac^{\pi,\theta}_\phi f)(\cdot)=(f\mid\pi(\exp_M(\theta(\cdot)))\phi). $$
Note that this is a continuous function on $\Xi$.
We say that the representation~$\pi$ satisfies 
the \emph{orthogonality relations} along the mapping $\theta$ if 
\begin{equation}\label{loc_orthog_eq1}
(\Ac^{\pi,\theta}_{\phi_1}f_1\mid\Ac^{\pi,\theta}_{\phi_2}f_2)_{L^2(\Xi)} 
=(f_1\mid f_2)_{\Hc}\cdot(\phi_2\mid\phi_1)_{\Hc}  
\end{equation}
for arbitrary $\phi_1,\phi_2,f_1,f_2\in\Hc$.
In particular, $\Ac^{\pi,\theta}_\phi f\in L^2(\Xi)$ for 
all $\phi,f\in\Hc$. 

(b) {\it Modulation spaces.} Consider any direct sum decomposition $\Xi=\Xi_1\dotplus\Xi_2$ and  
$r,s\in[1,\infty]$. 
For arbitrary $f\in\Hc_{-\infty}$ define 
$$\Vert f\Vert_{M^{r,s}_\phi(\pi,\theta)}
=\Bigl(\int\limits_{\Xi_2}
\Bigl(\int\limits_{\Xi_1}
\vert(\Ac^{\pi,\theta}_\phi f)(X_1,X_2)\vert^r\de X_1 \Bigr)^{s/r}
\de X_2\Bigr)^{1/s}\in[0,\infty] $$
with the usual conventions if $r$ or $s$ is infinite. 
The space 
$$M^{r,s}_\phi(\pi,\theta):=\{f\in\Hc_{-\infty}\mid\Vert f\Vert_{M^{r,s}_\phi(\pi,\theta)}<\infty\}$$ 
is called a \emph{modulation space} for 
the unitary representation $\pi\colon M\to\Bc(\Hc)$ 
with respect to the linear mapping $\theta\colon \Xi\to\mg$, 
the decomposition $\Xi\simeq\Xi_1\times\Xi_2$,  
and the \emph{window vector} $\phi\in\Hc_\infty\setminus\{0\}$. 
\qed
\end{definition}

In connection with the above definition, we note that more general ``co-orbit spaces'' $\Xc_\phi(\pi,\theta)$ can be defined in $\Hc_{-\infty}$ by using any Banach space $\Xc$ of functions on $\Xi$ instead of the mixed-norm Lebesgue spaces $L^{r,s}(\Xi_1\times\Xi_2)$. 
More specifically, one can define for any window vector $\phi\in\Hc_\infty$,
$$\Xc_\phi(\pi,\theta)=\{f\in\Hc_{-\infty}\mid \Ac^{\pi,\theta}_\phi f\in\Xc\}.$$
A systematic investigation of these spaces can be done in a broader context (see \cite{BB10c}).  
However, the modulation spaces $M^{r,s}_\phi(\pi,\theta)$ introduced in Definition~\ref{loc_orthog} above will suffice for the purposes of the present paper. 
See \cite{FG88}, \cite{FG89a}, and \cite{FG89b} for these constructions in the case of representations of locally compact groups.

\begin{remark}\label{mod_L2}
\normalfont 
If the representation~$\pi$ satisfies the orthogonality relations along 
the linear mapping $\theta\colon \Xi\to\mg$, 
then for any decomposition $\Xi=\Xi_1\dotplus\Xi_2$ and 
 any choice of the window vector $\phi\in\Hc_\infty\setminus\{0\}$, 
 we have $M^{2,2}_\phi(\pi,\theta)=\Hc$. 
\qed
\end{remark}

\begin{remark}\label{mod_equiv}
\normalfont 
Let $V\colon\Hc\to\Hc_1$ be a unitary operator and 
consider the unitary representation $\pi_1\colon M\to\Bc(\Hc_1)$ 
such that $V\pi(m)=\pi_1(m)V$ for every $m\in M$. 
Denote by $\Hc_{1,\infty}$ the space of smooth vectors for $\pi_1$ 
and let $\Hc_{1,-\infty}$ be the strong dual of $\overline{\Hc_{1,\infty}}$. 
Then there exist the linear topological isomorphisms 
$V\vert_{\Hc_\infty}\colon\Hc_\infty\to\Hc_{1,\infty}$ and 
$V_{-\infty}\colon\Hc_{-\infty}\to\Hc_{1,-\infty}$, 
where $V_{-\infty}f=f\circ V^*\vert_{\Hc_{1,\infty}}$ 
for every $f\in\Hc_{-\infty}$. 
It is easy to check that for every linear mapping 
$\theta\colon\Xi\to\mg$ and arbitrary $\phi\in\Hc_\infty$ 
and $f\in\Hc_{-\infty}$ we have 
$\Ac^{\pi,\theta}_\phi f=\Ac^{\pi_1,\theta}_{V\phi}(V_{-\infty}f)$. 
Therefore $V_{-\infty}$ naturally gives rise to isometric isomorphisms from the modulation spaces of the representation $\pi$ onto the corresponding modulation spaces of the representation $\pi_1$.
\qed
\end{remark}

\begin{definition}\label{wigner_def}\normalfont
{\it Growth condition.}
We say that the representation~$\pi$  
satisfies the \emph{growth condition} along   
the linear mapping $\theta\colon \Xi\to\mg$
if  
\begin{equation} 
\label{wigner_def_eq0} 
\Ac^{\pi,\theta}_{\phi_2}\phi_1  \in\Sc(\Xi), \quad \text{for all} \; \phi_1,\phi_2\in\Hc_\infty 
 \end{equation}
Note that \eqref{wigner_def_eq0}  implies that the sesquilinear map 
$$\Ac^{\pi,\theta}\colon\Hc_\infty\times\Hc_\infty\to\Sc(\Xi),\quad 
(\phi_1,\phi_2)\mapsto\Ac^{\pi,\theta}_{\phi_2}\phi_1$$
is separately continuous as a straightforward application of the closed graph theorem,  
and then it is jointly continuous by \cite[Cor.~1 to Th.~5.1 in Ch.~III]{Sch66}. 

If the representation~$\pi$ satisfies the orthogonality relations along the mapping~$\theta$, 
and $\phi,f\in\Hc$, then $\Ac^{\pi,\theta}_\phi f\in L^2(\Xi)$, 
hence we can define the \emph{cross-Wigner distribution} 
$\Wig(f,\phi)\in L^2(\Xi^*)$ by the condition $\widehat{\Wig(f,\phi)}:=\Ac^{\pi,\theta}_\phi f$.
\qed
\end{definition}

\begin{definition}\label{density_cond}
\normalfont
{\it Density condition.}
The representation~$\pi$ is said to 
satisfy the \emph{density condition} along   
the linear mapping $\theta\colon \Xi\to\mg$
if $\{\Ac^{\pi,\theta}_\phi f\mid\phi,f\in\Hc\}$ is a total subset of $L^2(\Xi)$, 
in the sense that it spans a dense linear subspace. 
\qed
\end{definition}

\begin{remark}
\normalfont
If the representation $\pi$ satisfies the orthogonality relations along $\theta$, then it follows in particular that 
$\{\Ac^{\pi,\theta}_\phi f\mid\phi,f\in\Hc\}\subseteq L^2(\Xi)$, 
however it is not clear in general that this subset of $L^2(\Xi)$ is total. 
Similarly, if $\pi$ satisfies the growth condition along $\theta$, 
then $\{\Ac^{\pi,\theta}_\phi f
\mid\phi,f\in\Hc_\infty\}\subseteq\Sc(\Xi)\subseteq L^2(\Xi)$, however in this way we may not get a total subset of $L^2(\Xi)$. 
\qed
\end{remark}

\begin{lemma}\label{loc5}
If the representation~$\pi$ satisfies the orthogonality relations along the linear mapping $\theta\colon\Xi\to\mg$, 
then the following assertions hold: 
\begin{enumerate}
\item\label{loc5_item1} 
The representation~$\pi\otimes\bar\pi$  
satisfies the orthogonality relations along  the linear mapping $\theta\times\theta\colon\Xi\times\Xi\to\mg\times\mg$.
\item\label{loc5_item2} 
The representation~$\pi^\ltimes$ satisfies the orthogonality relations
along each of the linear mappings $\Lie(\mu)^{-1}\circ(\theta\times\theta)\colon\Xi\times\Xi\to\mg\ltimes\mg$ 
and $\theta\times\theta\colon\Xi\times\Xi\to\mg\ltimes\mg$.
\end{enumerate}
\end{lemma}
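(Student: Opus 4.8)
The plan is to reduce everything to the single fact that $\pi$ satisfies the orthogonality relations along $\theta$, together with the structural identities for $\pi\otimes\bar\pi$ and $\pi^\ltimes$ recorded in Section~\ref{Sect2}. First I would prove \eqref{loc5_item1}. Fix $\phi_1,\phi_2,f_1,f_2\in\Sg_2(\Hc)$; by density of $\Hc_\infty\otimes\overline{\Hc_\infty}$ in $\Sg_2(\Hc)$ and separate continuity it suffices to treat rank-one operators, say $\phi_j=\psi_j\otimes\bar\chi_j$ and $f_j=g_j\otimes\bar h_j$. The key computation is that the ambiguity function of $\pi\otimes\bar\pi$ along $\theta\times\theta$ factors:
$$
(\Ac^{\pi\otimes\bar\pi,\theta\times\theta}_{\phi}f)(X_1,X_2)
=(f\mid(\pi\otimes\bar\pi)(\exp_M\theta(X_1),\exp_M\theta(X_2))\phi)_{\Sg_2(\Hc)}
=(\Ac^{\pi,\theta}_{\psi}g)(X_1)\cdot\overline{(\Ac^{\pi,\theta}_{\chi}h)(X_2)},
$$
using the definition of the Hilbert--Schmidt inner product and the defining formula $(\pi\otimes\bar\pi)(m_1,m_2)T=\pi(m_1)T\pi(m_2)^{-1}$. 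Then the $L^2(\Xi\times\Xi)$ inner product of two such functions factors as a product of two $L^2(\Xi)$ inner products, each of which is evaluated by \eqref{loc_orthog_eq1} for $\pi$; multiplying the two results and regrouping gives exactly $(f_1\mid f_2)_{\Sg_2(\Hc)}\cdot(\phi_2\mid\phi_1)_{\Sg_2(\Hc)}$ on rank-one operators, and this extends by sesquilinearity and continuity to all of $\Sg_2(\Hc)$.

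Next I would derive \eqref{loc5_item2} for the mapping $\Lie(\mu)^{-1}\circ(\theta\times\theta)$. The point is that $\mu\colon M\ltimes M\to M\times M$ is an isomorphism of Lie groups (Lemma~\ref{loc3}) intertwining $\pi^\ltimes$ with $\pi\otimes\bar\pi$ via \eqref{loc4_eq1}, and an isomorphism of Lie groups commutes with the exponential maps, so
$$
\pi^\ltimes\bigl(\exp_{M\ltimes M}(\Lie(\mu)^{-1}\circ(\theta\times\theta))(Y)\bigr)
=(\pi\otimes\bar\pi)\bigl(\exp_{M\times M}(\theta\times\theta)(Y)\bigr)
$$
for $Y\in\Xi\times\Xi$. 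Hence $\Ac^{\pi^\ltimes,\Lie(\mu)^{-1}\circ(\theta\times\theta)}_{\phi}f=\Ac^{\pi\otimes\bar\pi,\theta\times\theta}_{\phi}f$ for all $\phi,f\in\Sg_2(\Hc)$, and the orthogonality relations for $\pi^\ltimes$ along this mapping are literally the same identities as those for $\pi\otimes\bar\pi$ along $\theta\times\theta$, already established in \eqref{loc5_item1}.

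Finally, the mapping $\theta\times\theta\colon\Xi\times\Xi\to\mg\ltimes\mg$ (as opposed to composing with $\Lie(\mu)^{-1}$) is the one place where a genuine computation is needed, and this is the step I expect to be the main obstacle. Here I would use Lemma~\ref{loc3}\eqref{loc3_item2}, which gives $\exp_{M\ltimes M}(X,Y)=(\exp_M X,\exp_M(-X)\exp_M(X+Y))$, so that $\exp_{M\ltimes M}(\theta(X_1),\theta(X_2))=(\exp_M\theta(X_1),\exp_M(-\theta(X_1))\exp_M(\theta(X_1)+\theta(X_2)))$; feeding this through the defining formula for $\pi^\ltimes$ and taking Hilbert--Schmidt inner products with a rank-one operator reduces $\Ac^{\pi^\ltimes,\theta\times\theta}_{\phi}f$ to an integral kernel built from products of two ordinary ambiguity functions of $\pi$, but now with arguments twisted by a change of variables of the form $(X_1,X_2)\mapsto(X_1,X_1+X_2)$ coming from the $\exp_M(-X)\exp_M(X+Y)$ factor. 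This change of variables has Jacobian $1$ on $\Xi\times\Xi$, so after substituting one is back to the factored situation of part~\eqref{loc5_item1} and the orthogonality relations for $\pi$ close the argument; the only care required is to track the twist carefully enough to see that it is unimodular. Alternatively, one can absorb the twist by noting that $(X,Y)\mapsto(\theta(X),\theta(X)+\theta(Y))$ and $\Lie(\mu)^{-1}\circ(\theta\times\theta)$ differ by a linear automorphism of $\Xi\times\Xi$ of determinant one, and quote the first half of \eqref{loc5_item2} together with the observation that the orthogonality relations are invariant under unimodular linear reparametrizations of $\Xi$.
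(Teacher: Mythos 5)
Your proposal is correct and follows essentially the same route as the paper: rank-one factorization plus density/sesquilinearity for part~\eqref{loc5_item1}, the intertwining isomorphism $\mu$ together with the naturality of exponential maps for the first half of~\eqref{loc5_item2}, and a unimodular linear reparametrization of $\Xi\times\Xi$ for the second half. The one bookkeeping slip is in the final ``alternatively'' step: the relevant identity is $\Lie(\mu)^{-1}\circ(\theta\times\theta)=(\theta\times\theta)\circ L$ with $L(X,Y)=(Y,X-Y)$, a linear automorphism of $\Xi\times\Xi$ with Jacobian of absolute value $1$, rather than the map $(X,Y)\mapsto(\theta(X),\theta(X)+\theta(Y))$ you wrote, but this does not affect the substance of the argument.
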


\begin{proof}
To see that Assertion~\eqref{loc5_item1} holds, 
first prove the orthogonality relations for rank-one operators in $\Sg_2(\Hc)$, 
then extend them by sesquilinearity to the finite-rank operators, 
and eventually extend them by continuity to arbitrary Hilbert-Schmidt operators. 
Then Assertion~\eqref{loc5_item2} on $\Lie(\mu)\circ(\theta\times\theta)$ follows by Assertion~\eqref{loc5_item1} 
along with equation~\eqref{loc4_eq1}. 

Then, to see that also the representation~$\pi^\ltimes$ satisfies the orthogonality relations
along
$\theta\times\theta\colon\Xi\times\Xi\to\mg\ltimes\mg$, 
just note that 
$$(\Lie(\mu)^{-1}\circ(\theta\times\theta))(X,Y)=(\theta(Y),\theta(X)-\theta(Y))
=(\theta\times\theta)(Y,X-Y) $$ 
and the linear mapping 
$\Xi\times\Xi\to\Xi\times\Xi$, $(X,Y)\mapsto(Y,X-Y)$,  
has the Jacobian identically equal to~1. 
\end{proof}

\begin{lemma}\label{loc55}
If the representation~$\pi$ satisfies the growth condition along the linear mapping $\theta\colon\Xi\to\mg$, 
then the following assertions hold: 
\begin{enumerate}
\item\label{loc55_item1} 
The representation~$\pi\otimes\bar\pi$  
satisfies the growth condition along  the linear mapping $\theta\times\theta\colon\Xi\times\Xi\to\mg\times\mg$.
\item\label{loc55_item2} 
The representation~$\pi^\ltimes$ satisfies the growth condition
along each of the linear mappings $\Lie(\mu)^{-1}\circ(\theta\times\theta)\colon\Xi\times\Xi\to\mg\ltimes\mg$ 
and $\theta\times\theta\colon\Xi\times\Xi\to\mg\ltimes\mg$.
\end{enumerate}
\end{lemma}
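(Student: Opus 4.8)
The plan is to parallel the proof of Lemma~\ref{loc5}, since the growth condition is structurally analogous to the orthogonality relations but involves only smooth vectors and the statement $\Ac^{\pi,\theta}_{\phi_2}\phi_1\in\Sc(\Xi)$. First I would prove Assertion~\eqref{loc55_item1}. Given $\Phi_1,\Phi_2\in\Sg_2(\Hc)_\infty=\Bc(\Hc)_\infty$, I need to show that the function
$$(X,Y)\mapsto\bigl(\Phi_1\mid(\pi\otimes\bar\pi)(\exp_{M\times M}(\theta(X),\theta(Y)))\Phi_2\bigr)_{\Sg_2(\Hc)}$$
lies in $\Sc(\Xi\times\Xi)$. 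Using the commutative diagram~\eqref{smoothness_eq1}, it suffices to treat $\Phi_i$ lying in the dense subspace of $\Hc_\infty\hotimes\overline{\Hc_\infty}$ spanned by elementary tensors $\phi\otimes\bar\psi$, for which the Hilbert--Schmidt inner product factors as a product of two ambiguity functions of $\pi$; each factor is in $\Sc(\Xi)$ by the growth condition for $\pi$, so the product is in $\Sc(\Xi\times\Xi)$. To pass from the dense subspace to all of $\Bc(\Hc)_\infty$, I would invoke the joint continuity of the sesquilinear map $\Ac^{\pi\otimes\bar\pi,\theta\times\theta}\colon\Bc(\Hc)_\infty\times\Bc(\Hc)_\infty\to\Sc(\Xi\times\Xi)$; more precisely, the map into $\Sc(\Xi\times\Xi)$ is already well-defined and separately continuous on the dense subspace (closed graph theorem, exactly as in Definition~\ref{wigner_def}), hence extends continuously, and the extension agrees with the continuous $L^2$-valued ambiguity function, so the latter actually takes values in $\Sc(\Xi\times\Xi)$.

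For Assertion~\eqref{loc55_item2}, I would first handle the mapping $\Lie(\mu)^{-1}\circ(\theta\times\theta)$. By~\eqref{loc4_eq1} we have $\pi^\ltimes=(\pi\otimes\bar\pi)\circ\mu$, and $\mu$ is an isomorphism of Lie groups by Lemma~\ref{loc3}\eqref{loc3_item1} with $\Lie(\mu)(X,Y)=(X+Y,X)$. Consequently, for the linear map $\theta^\ltimes:=\Lie(\mu)^{-1}\circ(\theta\times\theta)\colon\Xi\times\Xi\to\mg\ltimes\mg$ one computes, using that $\mu$ intertwines the exponential maps (up to the correction term recorded in Lemma~\ref{loc3}\eqref{loc3_item2}), that
$$\pi^\ltimes(\exp_{M\ltimes M}(\theta^\ltimes(X,Y)))=(\pi\otimes\bar\pi)(\exp_{M\times M}((\theta\times\theta)(X,Y))),$$
so that the ambiguity functions coincide: $\Ac^{\pi^\ltimes,\theta^\ltimes}=\Ac^{\pi\otimes\bar\pi,\theta\times\theta}$. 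Hence the growth condition for $\pi^\ltimes$ along $\theta^\ltimes$ follows immediately from Assertion~\eqref{loc55_item1}. For the second mapping $\theta\times\theta\colon\Xi\times\Xi\to\mg\ltimes\mg$, I would use exactly the reduction from the end of the proof of Lemma~\ref{loc5}: the identity
$$\bigl(\Lie(\mu)^{-1}\circ(\theta\times\theta)\bigr)(X,Y)=(\theta\times\theta)(Y,X-Y)$$
shows that the ambiguity function for $\theta\times\theta$ is obtained from that for $\Lie(\mu)^{-1}\circ(\theta\times\theta)$ by precomposition with the linear automorphism $(X,Y)\mapsto(Y,X-Y)$ of $\Xi\times\Xi$; since $\Sc(\Xi\times\Xi)$ is invariant under linear changes of variables, the growth condition is preserved.

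The main obstacle I anticipate is the density-and-continuity argument in Assertion~\eqref{loc55_item1}: one must be careful that the $\Sc(\Xi\times\Xi)$-valued map defined on elementary tensors genuinely extends to all of $\Bc(\Hc)_\infty$ and that this extension coincides pointwise with the $L^2$-valued ambiguity function rather than merely agreeing on a dense set in some weaker topology. This is handled by noting that the inclusion $\Sc(\Xi\times\Xi)\hookrightarrow L^2(\Xi\times\Xi)$ is continuous, that $\Bc(\Hc)_\infty$ is a Fréchet space (Proposition~\ref{twice}\eqref{twice_item1} gives that $\pi\otimes\bar\pi$ is nuclearly smooth, so $\Bc(\Hc)_\infty$ is a nuclear Fréchet space), and that the two maps into $L^2(\Xi\times\Xi)$ are continuous and agree on a dense subspace, hence everywhere; the continuity into $\Sc(\Xi\times\Xi)$ then comes for free via the closed graph theorem applied to the Fréchet-space target, exactly as in Definition~\ref{wigner_def}. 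Everything else is a routine transport of structure through the Lie group isomorphism $\mu$ and linear changes of variables.
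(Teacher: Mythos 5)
Your argument for Assertion~\eqref{loc55_item2} is correct and matches the paper's: once Assertion~\eqref{loc55_item1} is in hand, both $\Lie(\mu)^{-1}\circ(\theta\times\theta)$ and $\theta\times\theta$ are handled by transporting through $\mu$ via~\eqref{loc4_eq1} and then noting that the linear automorphism $(X,Y)\mapsto(Y,X-Y)$ of $\Xi\times\Xi$ preserves $\Sc(\Xi\times\Xi)$.

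The problem is in your proof of Assertion~\eqref{loc55_item1}. You verify the pointwise factorization
$\Ac^{\pi\otimes\bar\pi,\theta\times\theta}_{\phi_1\otimes\bar\phi_2}(f_1\otimes\bar f_2)=\Ac^{\pi,\theta}_{\phi_1}f_1\otimes\overline{\Ac^{\pi,\theta}_{\phi_2}f_2}$
on elementary tensors and then try to ``extend by density'' to all of $\Bc(\Hc)_\infty$. But the density-plus-closed-graph step cannot be carried out as you describe. The closed graph theorem, as invoked in Definition~\ref{wigner_def}, applies because the domain $\Hc_\infty$ is a Fr\'echet space; here your candidate domain is the span of the elementary tensors, which carries the subspace topology from $\Bc(\Hc)_\infty$ and is \emph{not} complete, so the closed graph theorem gives you nothing on it. Moreover, separate continuity of a bilinear map on a dense subspace does not entail a continuous extension to the completion --- you would need joint continuity for the induced topology, and establishing that is exactly the content you are trying to skip. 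Note also that the last clause (``agrees with the continuous $L^2$-valued ambiguity function'') implicitly invokes $L^2$-boundedness of the ambiguity map, but the hypotheses of this lemma are only the growth condition, not the orthogonality relations; this particular issue can be fixed by using a weaker Hausdorff topology such as pointwise convergence, but the extension gap itself remains.

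The paper's argument avoids all of this by building the map ``from the inside'' via the universal property of the completed projective tensor product. First the growth condition, together with the closed graph theorem on the Fr\'echet space $\Hc_\infty$ and \cite[Cor.~1 to Th.~5.1 in Ch.~III]{Sch66}, gives that $\Ac^{\pi,\theta}\colon\Hc_\infty\times\overline{\Hc_\infty}\to\Sc(\Xi)$ is \emph{jointly} continuous; by the universal property of $\hotimes$ this yields a continuous linear map $\Hc_\infty\hotimes\overline{\Hc_\infty}\to\Sc(\Xi)$. Tensoring two copies (one conjugated), composing with the permutation, and using the twice-nuclear-smoothness isomorphism $\Hc_\infty\hotimes\overline{\Hc_\infty}\simeq\Bc(\Hc)_\infty$ then produces a continuous linear map $\Bc(\Hc)_\infty\hotimes\overline{\Bc(\Hc)_\infty}\to\Sc(\Xi\times\Xi)$, hence a jointly continuous sesquilinear map on $\Bc(\Hc)_\infty\times\Bc(\Hc)_\infty$ \emph{with values already in $\Sc(\Xi\times\Xi)$ by construction}. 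Only after that does one observe that this map agrees with $\Ac^{\pi\otimes\bar\pi,\theta\times\theta}$ on elementary tensors and hence everywhere. The order of operations is what matters: establish $\Sc$-valuedness by the tensor construction, and only then conclude; you cannot deduce the $\Sc$-valuedness from the closed graph theorem because that theorem presupposes it.
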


\begin{proof}
The growth condition for the representation $\pi$ along $\theta$ implies that the bilinear map
$ \Ac^{\pi, \theta}\colon \Hc_\infty\times \overline{\Hc_\infty}\to \Sc (\Xi)$
is continuous, hence extends to a continuous linear map
$$ \Ac^{\pi, \theta}\colon \Hc_\infty\hotimes \overline{\Hc_\infty}\to \Sc (\Xi).$$
By complex conjugation we also have
$$ \overline{\Ac^{\pi, \theta}}\colon \overline{\Hc_\infty\hotimes \overline{\Hc_\infty}}=
\overline{\Hc_\infty}\hotimes \Hc_\infty \to \Sc (\Xi).$$
Thus we get the continuous mapping
$$ \Ac^{\pi, \theta} \hotimes \overline{\Ac^{\pi, \theta}}\colon 
\Hc_\infty\hotimes \overline{\Hc_\infty}
\hotimes \overline{\Hc_\infty}\hotimes \Hc_\infty \to \Sc (\Xi)\hotimes \Sc (\Xi)=\Sc (\Xi\times \Xi).
$$
By composing this with the permutation 
$(f_1, \phi_1, f_2, \phi_2) \mapsto (f_1, f_2, \phi_1, \phi_2)$ 
and using that $\Hc_\infty\hotimes \overline{\Hc_\infty}\simeq \Bc(\Hc)_\infty$, 
we get a continuous operator $\Bc(\Hc)_\infty\hotimes \overline{\Bc(\Hc)_\infty} \to \Sc (\Xi\times \Xi)$
which extends $\Ac^{\pi\otimes \bar{\pi}, \theta\times \theta}$, since
$$\Ac^{\pi\otimes\bar\pi,\theta\times\theta}_{\phi_1\otimes\bar\phi_2}(f_1\otimes\bar f_2)
=\Ac^{\pi,\theta}_{\phi_1}f_1\otimes
\overline{\Ac^{\pi,\theta}_{\phi_2}f_2}.$$
The second part in the growth condition can be checked similarly, by using that $\Hc_{-\infty}$ is nuclear, like
$\Hc_{\infty}$ (see \cite[Ch.IV, Th. 9.6]{Sch66}), and noting the isomorphisms $\Hc_{-\infty}\hotimes \overline{\Hc_{-\infty}}\simeq (\overline{\Hc_{\infty}}\hotimes \Hc_{\infty})'\simeq\overline{\Bc(\Hc)_\infty}'$.

Assertion~\eqref{loc55_item2} on $\Lie(\mu)\circ(\theta\times\theta)$ follows by Assertion~\eqref{loc55_item1} 
along with equation~\eqref{loc4_eq1}. 

Then, to see that also the representation~$\pi^\ltimes$ satisfies the growth condition
along
$\theta\times\theta\colon\Xi\times\Xi\to\mg\ltimes\mg$, 
just note that 
$$(\Lie(\mu)^{-1}\circ(\theta\times\theta))(X,Y)=(\theta(Y),\theta(X)-\theta(Y))
=(\theta\times\theta)(Y,X-Y) $$ 
and the linear mapping 
$\Xi\times\Xi\to\Xi\times\Xi$, $(X,Y)\mapsto(Y,X-Y)$,  
is invertible. 
\end{proof}

\subsection*{Localized Weyl calculus and its continuity properties}

\begin{definition}\label{loc1}
\normalfont 
Let $\theta\colon \Xi\to\mg$ be a linear mapping.

The \emph{localized Weyl calculus for $\pi$ along~$\theta$} 
is the mapping $\Op^\theta\colon\widehat{L^1(\Xi)}\to\Bc(\Hc)$ given by 
\begin{equation}\label{loc1_eq1}
\Op^\theta(a)
=\int\limits_{\Xi} \check{a}(X)\pi(\exp_M(\theta(X)))\,\de X
\end{equation}
for $a\in\widehat{L^1(\Xi)}$ where we use weakly convergent integrals.

The localized Weyl calculus for $\pi$ along $\theta$ 
is said to be \emph{regular} if 
\begin{itemize}
\item $\pi$ satisfies the growth condition along the mapping $\theta$, 
\item $\pi$ is twice nuclearly smooth, and 
\item $\Op^\theta(a) \in \Bc(\Hc)_\infty$ whenever  $a\in \Sc(\Xi^*)$. 
\end{itemize}
Note that the closed graph theorem then implies that $\Op^\theta\colon\Sc(\Xi^*)\to\Bc(\Hc)_\infty$ 
is a continuous linear mapping. 
\qed
\end{definition}

If the representation~$\pi$ satisfies the growth condition along the mapping $\theta$, then one can think of~\eqref{loc1_eq1} in the distributional sense in order to define the localized Weyl calculus 
$\Op^\theta\colon\Sc'(\Xi^*)\to\Lc(\Hc_\infty,\Hc_{-\infty})$. 
More specifically, for every $a\in\Sc'(\Xi^*)$ 
and $\phi,\psi\in\Hc_\infty$ we have 
\begin{equation}\label{loc1_eq2}
(\Op^\theta(a)\phi\mid\psi)
=\langle\check{a},\overline{\Ac^{\pi,\theta}_\phi\psi}\rangle 
\end{equation}
where $\langle\cdot,\cdot\rangle\colon\Sc'(\Xi)\times\Sc(\Xi)\to\CC$ 
is the usual duality pairing.

\begin{remark}\label{extension}
\normalfont
If the localized Weyl calculus for $\pi$ along $\theta$ 
is regular and moreover defines a linear topological isomorphism
$\Op^\theta\colon\Sc(\Xi^*)\to\Bc(\Hc)_\infty$ 
(see Proposition~\ref{unitary} for sufficient conditions), then  
we also have the linear topological isomorphism 
$\Op^\theta\colon\Sc'(\Xi^*)\to\Lc(\Hc_\infty,\Hc_{-\infty})$ 
by~Proposition~\ref{twice}\eqref{twice_item2}. 
Therefore, by using Remark~\ref{loc6}, 
we see that there exist the sesquilinear mappings 
\begin{equation}\label{extension_eq1}
\Ac^{\pi,\theta}\colon\Hc_{-\infty}\times\Hc_{-\infty}\to\Sc'(\Xi) 
\text{ and }
\Wig\colon\Hc_{-\infty}\times\Hc_{-\infty}\to\Sc'(\Xi^*)
\end{equation}
such that 
$$\Op^\theta(\Wig(f_1,f_2))=f_1\otimes\bar f_2$$ 
and $\widehat{\Wig(f_1,f_2)}=\Ac^{\pi,\theta}_{f_2}f_1$
for all $f_1,f_2\in\Hc_{-\infty}$. 
In addition, it follows by \eqref{loc1_eq2} and the definition of the Fourier transform for tempered distributions that 
for every $a\in\Sc'(\Xi^*)$ and $\phi,\psi\in\Hc_\infty$ we have 
\begin{equation}\label{loc1_eq2_bis}
(\Op^\theta(a)\phi\mid\psi)=(a\mid\Wig(\psi,\phi)). 
\end{equation}
If moreover the representation~$\pi$ satisfies the orthogonality relations along the linear mapping $\theta$, 
then it follows by Proposiion~\ref{unitary} below that the mappings~\eqref{extension_eq1}
agree with the ambiguity functions and the cross-Wigner distributions 
(see Definition~\ref{wigner_def}).
\qed
\end{remark}

\begin{proposition}\label{unitary}
If $\pi$ satisfies the orthogonality relations along the linear mapping $\theta\colon\Xi\to\mg$, then the following assertions are equivalent: 
\begin{enumerate}
\item\label{unitary_item1}
The representation $\pi$ satisfies the density condition along $\theta$. 
\item\label{unitary_item2} 
There exists a unique unitary operator $\Op^\theta\colon L^2(\Xi^*)\to\Sg_2(\Hc)$ which agrees with the localized Weyl calculus for $\pi$ along~$\theta$. 
\end{enumerate}
If these assertions hold true, then we have 
\begin{equation}\label{unitary_eq1}
(\forall f,\phi\in\Hc)\quad \Op^\theta(\Wig(f,\phi))=f\otimes\bar\phi.
\end{equation}
If moreover the localized Weyl calculus for $\pi$ along~$\theta$ is regular, then the mapping $\Op^\theta\colon\Sc(\Xi^*)\to\Bc(\Hc)_\infty$ 
is a linear topological isomorphism. 
\end{proposition}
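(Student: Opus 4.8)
The plan is to prove the equivalence $(1)\Leftrightarrow(2)$ and then the two supplementary statements in order. For $(1)\Rightarrow(2)$, I would start from the orthogonality relations \eqref{loc_orthog_eq1}: rewriting them in terms of the cross-Wigner distributions via $\widehat{\Wig(f,\phi)}=\Ac^{\pi,\theta}_\phi f$ and the fact that $\widehat{\cdot}\colon L^2(\Xi)\to L^2(\Xi^*)$ is unitary, they say exactly that $(\phi,f)\mapsto\Wig(f,\phi)$, extended sesquilinearly, sends the rank-one operator $f\otimes\bar\phi\in\Sg_2(\Hc)$ to an element of $L^2(\Xi^*)$ in a way that preserves inner products up to the scalar $(f_1\mid f_2)(\phi_2\mid\phi_1)$; that scalar is precisely $(f_1\otimes\bar\phi_1\mid f_2\otimes\bar\phi_2)_{\Sg_2(\Hc)}$. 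Hence $f\otimes\bar\phi\mapsto\Wig(f,\phi)$ extends by sesquilinearity to a linear isometry from the finite-rank operators into $L^2(\Xi^*)$, and then by continuity to an isometry $W\colon\Sg_2(\Hc)\to L^2(\Xi^*)$. Its range is the closed span of $\{\Ac^{\pi,\theta}_\phi f\mid\phi,f\in\Hc\}$ (after applying $\check{\ }$), which the density condition forces to be all of $L^2(\Xi^*)$; so $W$ is unitary. Setting $\Op^\theta:=W^{-1}$ gives a unitary operator, and one checks against \eqref{loc1_eq2} — i.e. $(\Op^\theta(a)\phi\mid\psi)=\langle\check a,\overline{\Ac^{\pi,\theta}_\phi\psi}\rangle$, which on $a=\Wig(f,\phi)$ reduces by the orthogonality relations to $(f\otimes\bar\phi)$ paired against $\psi\otimes\bar\phi$ — that this $\Op^\theta$ agrees with the integral formula \eqref{loc1_eq1} on $\widehat{L^1(\Xi)}\cap L^2$. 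Uniqueness is immediate since $\widehat{L^1(\Xi)}$ is dense in $L^2(\Xi^*)$.

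For $(2)\Rightarrow(1)$: if a unitary $\Op^\theta\colon L^2(\Xi^*)\to\Sg_2(\Hc)$ extends the localized calculus, then its adjoint $(\Op^\theta)^{-1}$ must send $f\otimes\bar\phi$ to $\Wig(f,\phi)$ (forced by \eqref{loc1_eq2_bis}, or directly by testing the integral formula against rank-one operators), and surjectivity of $\Op^\theta$ together with the fact that finite-rank operators are dense in $\Sg_2(\Hc)$ forces $\{\Wig(f,\phi)\}$, hence $\{\Ac^{\pi,\theta}_\phi f\}$, to be total in $L^2$ — this is the density condition. Formula \eqref{unitary_eq1} then falls out of the computation already performed in the forward direction: both sides are continuous sesquilinear in $(f,\phi)\in\Hc\times\Hc$, so it suffices to verify it on a total set, and the orthogonality relations give the matching of matrix coefficients $(\Op^\theta(\Wig(f,\phi))\psi_1\mid\psi_2)=(f\otimes\bar\phi)(\psi_1,\psi_2)$.

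For the last claim, assume in addition that the localized Weyl calculus is regular. By Proposition~\ref{twice}\eqref{twice_item2} we have $\Bc(\Hc)_\infty\simeq\Lc(\Hc_{-\infty},\Hc_\infty)\simeq\Hc_\infty\hotimes\overline{\Hc_\infty}$, a nuclear Fr\'echet space sitting densely and continuously inside $\Sg_2(\Hc)$; regularity gives the continuous inclusion $\Op^\theta\colon\Sc(\Xi^*)\to\Bc(\Hc)_\infty$. It remains to produce a continuous inverse. The natural candidate is $T\mapsto\Wig$-type map: for $T\in\Bc(\Hc)_\infty$ one writes $\Op^\theta{}^{-1}(T)=(\Op^\theta)^{-1}(T)$ using the Hilbert-space unitary from part~(2), and one must show this lands in $\Sc(\Xi^*)$ and is continuous into it. Here I would use the growth condition: $\Op^\theta{}^{-1}$ applied to $\phi_1\otimes\bar\phi_2$ equals $\Wig(\phi_1,\phi_2)$, whose Fourier transform is $\Ac^{\pi,\theta}_{\phi_2}\phi_1\in\Sc(\Xi)$, and the bilinear map $(\phi_1,\phi_2)\mapsto\Ac^{\pi,\theta}_{\phi_2}\phi_1$ is jointly continuous $\Hc_\infty\times\Hc_\infty\to\Sc(\Xi)$ (noted after \eqref{wigner_def_eq0}); by the universal property of $\hotimes$ it extends to a continuous map $\Hc_\infty\hotimes\overline{\Hc_\infty}\to\Sc(\Xi)$, i.e. $\Bc(\Hc)_\infty\to\Sc(\Xi)$, which composed with $\widehat{\ }{}^{-1}$ is exactly $\Op^\theta{}^{-1}$. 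Finally, $\Op^\theta$ and this continuous map are mutually inverse on the dense subspaces of finite-rank operators / of $\Sc$ generated by the orthogonality relations, hence on all of $\Bc(\Hc)_\infty$ and $\Sc(\Xi^*)$ by continuity and density, so $\Op^\theta\colon\Sc(\Xi^*)\to\Bc(\Hc)_\infty$ is a linear topological isomorphism.

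The main obstacle I anticipate is the bookkeeping in the last paragraph: making sure that the Hilbert-space unitary $(\Op^\theta)^{-1}$ from part~(2) and the Schwartz-valued map built from the growth condition genuinely coincide on $\Bc(\Hc)_\infty$ (rather than merely on finite-rank operators), which requires knowing that finite-rank operators lying in $\Bc(\Hc)_\infty$ — equivalently, elements of $\Hc_\infty\hotimes\overline{\Hc_\infty}$ of finite rank, i.e. the algebraic tensor product $\Hc_\infty\otimes\overline{\Hc_\infty}$ — are dense in $\Bc(\Hc)_\infty$ for its Fr\'echet topology. That density is standard for the projective tensor product of a Fréchet space with itself, but it must be invoked explicitly; once it is in hand, everything else is a continuity-and-density argument.
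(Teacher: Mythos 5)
Your proposal is correct and follows essentially the same route as the paper: the orthogonality relations give the isometry on rank-one operators, the density condition makes the extension a unitary, and the regularity hypothesis together with the growth condition produces the two continuous restrictions $\Op^\theta\colon\Sc(\Xi^*)\to\Bc(\Hc)_\infty$ and $\Wig=(\Op^\theta)^{-1}\vert_{\Bc(\Hc)_\infty}\colon\Bc(\Hc)_\infty\to\Sc(\Xi^*)$ via the identification $\Hc_\infty\hotimes\overline{\Hc_\infty}\simeq\Bc(\Hc)_\infty$. The density concern you flag at the end is the right thing to worry about and is resolved exactly as you say (the algebraic tensor product is dense in the completed projective tensor product of Fréchet spaces); the paper leaves this step implicit when it asserts $\Op^\theta\circ\Wig=\id$ on $\Bc(\Hc)_\infty$ from its validity on rank-one operators.
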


\begin{proof}
We begin with some general remarks. 
Since we have a unitary Fourier transform $L^2(\Xi)\to L^2(\Xi^*)$, 
it follows by the orthogonality relations along with~\eqref{loc1_eq2} that 
for arbitrary $f,\phi\in\Hc$ we have 
\begin{equation}\label{unitary_proof_eq1}
\Op^\theta(\Wig(f,\phi))=f\otimes\bar\phi\text{ and }
\Vert\Wig(f,\phi)\Vert_{L^2(\Xi^*)}
=\Vert f\Vert\cdot\Vert\phi\Vert
=\Vert f\otimes\bar\phi\Vert_{\Sg_2(\Hc)}.
\end{equation}
Moreover, 
\begin{equation}\label{unitary_proof_eq2}
\spa(\{f\otimes\bar\phi\mid f,\phi\in\Hc\})
\text{ is dense in }\Sg_2(\Hc).
\end{equation}
We now come back to the proof. 

``\eqref{unitary_item1}$\Rightarrow$\eqref{unitary_item2}'' 
Let $\pi$ satisfy the density condition along $\theta$. 
Since the Fourier transform $L^2(\Xi)\to L^2(\Xi^*)$ is unitary, 
it follows that $\spa(\{\Wig(f,\phi)\mid f,\phi\in\Hc\})$ is a dense linear subspace of~$L^2(\Xi^*)$. 
Therefore, by using \eqref{unitary_proof_eq1} and \eqref{unitary_proof_eq2}, 
we see that $\Op^\theta$ uniquely extends to a unitary operator 
$L^2(\Xi^*)\to\Sg_2(\Hc)$. 

``\eqref{unitary_item2}$\Rightarrow$\eqref{unitary_item1}'' 
If the operator $\Op^\theta\colon L^2(\Xi^*)\to\Sg_2(\Hc)$ is unitary, 
then it follows by \eqref{unitary_proof_eq1} and \eqref{unitary_proof_eq2} 
that $\spa(\{\Wig(f,\phi)\mid f,\phi\in\Hc\})$ is a dense linear subspace of~$L^2(\Xi^*)$. 
Then, by using again the fact that the Fourier transform $L^2(\Xi)\to L^2(\Xi^*)$ is unitary, we can see that 
$\spa(\{\Ac^{\pi,\theta}_\phi f\mid f,\phi\in\Hc\})$ is a dense linear subspace of~$L^2(\Xi)$, 
that is, $\pi$ satisfies the density condition along $\theta$. 

Now assume that the assertions \eqref{unitary_item1} and \eqref{unitary_item2} in the statement 
are satisfied and the localized Weyl calculus for $\pi$ along $\theta$ is regular. 
Then $\pi$ satisfies the growth condition along $\theta$, 
hence the ambiguity function defines a continuous sesquilinear mapping $\Ac^{\pi,\theta}\colon\Hc_\infty\times\Hc_\infty\to\Sc(\Xi)$ 
(see Definition~\ref{wigner_def}). 
Since the Fourier transform is a linear topological isomorphism 
$\Sc(\Xi)\to\Sc(\Xi^*)$, the cross-Wigner distributions 
also define a continuous sesquilinear mapping $\Wig\colon\Hc_\infty\times\Hc_\infty\to\Sc(\Xi^*)$, 
which further induces a continuous linear mapping 
$\Wig\colon\Hc_\infty\hotimes\overline{\Hc_\infty}\to\Sc(\Xi^*)$. 
On the other hand, the condition that the localized Weyl calculus for $\pi$ along $\theta$ is regular (see Definition~\ref{loc1}) 
includes the assumption that the representation $\pi$ is twice nuclearly smooth, 
hence we have a topological linear isomorphism $\Hc_\infty\hotimes\overline{\Hc_\infty}\simeq\Bc(\Hc)_\infty$. 

We thus eventually get a continuous linear mapping 
$\Wig\colon\Bc(\Hc)_\infty\to\Sc(\Xi^*)$ which, by \eqref{unitary_proof_eq1}, 
has the property $\Op^\theta\circ\Wig=\id$ on $\Bc(\Hc)_\infty$. 
In other words, $\Wig=(\Op^\theta)^{-1}\mid_{\Bc(\Hc)_\infty}$. 
Thus the unitary operator $\Op^\theta\colon L^2(\Xi^*)\to\Sg_2(\Hc)$ 
restricts to a continuous linear map $\Sc(\Xi^*)\to\Bc(\Hc)_\infty$ 
(since the localized Weyl calculus for $\pi$ along $\theta$ is regular), 
while its inverse $(\Op^\theta)^{-1}$ restricts to 
a continuous linear map $\Wig\colon\Bc(\Hc)_\infty\to\Sc(\Xi^*)$. 
It then follows that $\Op^\theta\colon\Sc(\Xi^*)\to\Bc(\Hc)_\infty$ 
is a linear topological isomorphism (whose inverse is $\Wig$). 
\end{proof}

\begin{definition}\label{moyal}
\normalfont
Assume that the localized Weyl calculus for $\pi$ along the linear mapping $\theta\colon\Xi\to\mg$ is regular and the representation $\pi$ satisfies both the density condition and the orthogonality relations along $\theta$.  
It follows by Proposition~\ref{unitary} that the localized Weyl calculus $\Op^\theta$ 
defines a unitary operator 
$L^2(\Xi^*)\to\Sg_2(\Hc)$, 
and also linear topological isomorphisms $\Sc(\Xi^*)\to\Bc(\Hc)_\infty\simeq\Lc(\Hc_{-\infty},\Hc_\infty)$ 
and $\Sc'(\Xi^*)\to\Lc(\Hc_\infty,\Hc_{-\infty})$. 
Hence we can introduce the following notions:
\begin{enumerate}
\item If $a,b\in\Sc'(\Xi^*)$ and there exists the well-defined 
the operator product 
$\Op^\theta(a)\Op^\theta(b)\in\Lc(\Hc_\infty,\Hc_{-\infty})$, then Remark~\ref{extension} shows that 
the \emph{Moyal product} $a\#^\theta b\in\Sc'(\Xi^*)$ 
is uniquely determined by the condition 
$$\Op^\theta(a\#^\theta b)=\Op^\theta(a)\Op^\theta(b).$$ 
Thus the Moyal product defines bilinear mappings 
$\Sc(\Xi^*)\times\Sc(\Xi^*)\to\Sc(\Xi^*)$ and 
$L^2(\Xi^*)\times L^2(\Xi^*)\to L^2(\Xi^*)$.
\item We define the unitary representation 
$\pi^\#\colon M\ltimes M\to\Bc(L^2(\Xi^*))$  
such that for every $m\in M\ltimes M$ there exists the commutative diagram 
$$\xymatrix{
L^2(\Xi^*)\ar[r]^{\pi^\#(m)} \ar[d]_{\Op^\theta} & L^2(\Xi^*) \ar[d]^{\Op^\theta} \\
\Sg_2(\Hc)\ar[r]^{\pi^\ltimes(m)} & \Sg_2(\Hc) 
}$$
\end{enumerate} 
These constructions provide extensions of some notions introduced in \cite{BB09c}. 
\qed
\end{definition}

\begin{remark}\label{moyal_rem}
\normalfont
In the setting of Definition~\ref{moyal} we note the following facts: 
\begin{enumerate}
\item For every $m_1,m_2\in M$ and $f\in L^2(\Xi^*)$ we have 
$$\pi^\#(m_1,m_2)f=(\Op^\theta)^{-1}(\pi(m_1m_2))\#^\theta 
 f\#^\theta(\Op^\theta)^{-1}(\pi(m_1))^{-1}.$$
\item For every  $X_1,X_2\in\Xi$ we have 
$\Op^\theta(\ee^{\ie\langle\cdot,X_j\rangle})=\pi(\exp_M(\theta(X_j)))$ for $j=1,2$, whence 
by Lemma~\ref{loc3}\eqref{loc3_item2}
$$\begin{aligned}
\pi^\#(\exp_{M\ltimes M} &(\theta(X_1),\theta(X_2))f \\
&=\pi^\#(\exp_M(\theta(X_1)),\exp_M(-\theta(X_1))\exp_M(\theta(X_1+X_2)))f \\
&=\ee^{\ie\langle\cdot,X_1+X_2\rangle}\#^\theta f\#^\theta \ee^{-\ie\langle\cdot,X_1\rangle}
\end{aligned} $$
whenever $f\in L^2(\Xi^*)$.
\end{enumerate}
\qed
\end{remark}

\begin{proposition}\label{loc7}
Assume that the representation $\pi$ is twice nuclearly smooth. 
If we have either $\phi_1,\phi_2,f_1,f_2\in\Hc$, 
or $\phi_1,\phi_2\in\Hc_\infty$ and $f_1,f_2\in\Hc_{-\infty}$, 
then 
$$(\forall X,Y\in\Xi)\quad 
(\Ac^{\pi^\ltimes,\theta\times\theta}_{\phi_1\otimes\bar\phi_2}(f_1\otimes\bar f_2))(X,Y)
=(\Ac^{\pi,\theta}_{\phi_1}f_1)(X+Y)\cdot
\overline{(\Ac^{\pi,\theta}_{\phi_2}f_2)(X)}.$$
If moreover the localized Weyl calculus for $\pi$ along  
$\theta$ is regular and the representation $\pi$ satisfies both the density condition and the orthogonality relations along $\theta$, 
then
$$(\forall X,Y\in\Xi)\quad 
(\Ac^{\pi^\#,\theta\times\theta}_{\Wig(\phi_1,\phi_2)}(\Wig(f_1,f_2)))(X,Y)
=(\Ac^{\pi,\theta}_{\phi_1}f_1)(X+Y)\cdot
\overline{(\Ac^{\pi,\theta}_{\phi_2}f_2)(X)}.$$
\end{proposition}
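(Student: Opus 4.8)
The plan is to prove the first identity by an explicit computation with rank-one operators, using the structure of the group square $M\ltimes M$, and then to obtain the second identity by transporting the first one along the unitary intertwiner $\Op^\theta$. For the first identity, note that since $\theta$ is linear, Lemma~\ref{loc3}\eqref{loc3_item2} gives, for all $X,Y\in\Xi$,
$$\exp_{M\ltimes M}(\theta(X),\theta(Y))
=\bigl(\exp_M(\theta(X)),\,\exp_M(-\theta(X))\exp_M(\theta(X+Y))\bigr).$$
Feeding this into the definition of $\pi^\ltimes$ (Definition~\ref{loc4}) and noting that here $m_1=\exp_M(\theta(X))$ and $m_1m_2=\exp_M(\theta(X+Y))$, we obtain, for every $T\in\Sg_2(\Hc)$,
$$\pi^\ltimes\bigl(\exp_{M\ltimes M}(\theta(X),\theta(Y))\bigr)T
=\pi(\exp_M(\theta(X+Y)))\,T\,\pi(\exp_M(\theta(X)))^{-1}.$$
Since $\pi$ is unitary, $\pi(a)(\psi_1\otimes\bar\psi_2)\pi(b)^{-1}=(\pi(a)\psi_1)\otimes\overline{(\pi(b)\psi_2)}$ for arbitrary $a,b\in M$, so applying the displayed formula to $T=\phi_1\otimes\bar\phi_2$ produces the rank-one operator $\psi_1\otimes\bar\psi_2$ with $\psi_1=\pi(\exp_M(\theta(X+Y)))\phi_1$ and $\psi_2=\pi(\exp_M(\theta(X)))\phi_2$. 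When $\phi_1,\phi_2\in\Hc_\infty$ one has $\phi_1\otimes\bar\phi_2\in\Bc(\Hc)_\infty$ by diagram~\eqref{smoothness_eq1}, $\psi_1,\psi_2\in\Hc_\infty$ by Remark~\ref{smooth_top}, and $f_1\otimes\bar f_2\in\Sg_2(\Hc)_{-\infty}$ by Remark~\ref{loc6}, so the relevant ambiguity function is well defined; when all four vectors lie in $\Hc$ there is nothing to check. It then remains to pair $\psi_1\otimes\bar\psi_2$ with $f_1\otimes\bar f_2$. In the case $\phi_1,\phi_2,f_1,f_2\in\Hc$ the pairing is the Hilbert--Schmidt scalar product, for which $(f_1\otimes\bar f_2\mid\psi_1\otimes\bar\psi_2)_{\Sg_2(\Hc)}=(f_1\mid\psi_1)_\Hc\,\overline{(f_2\mid\psi_2)_\Hc}$. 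In the case $\phi_1,\phi_2\in\Hc_\infty$, $f_1,f_2\in\Hc_{-\infty}$ the pairing is the one of Remark~\ref{loc6}, and by Proposition~\ref{twice}\eqref{twice_item2} the operator $\psi_1\otimes\bar\psi_2\in\Bc(\Hc)_\infty\simeq\Lc(\Hc_{-\infty},\Hc_\infty)$ acts by $(\psi_1\otimes\bar\psi_2)f_2=(f_2\mid\psi_2)\psi_1$, whence once more $(f_1\otimes\bar f_2)(\psi_1\otimes\bar\psi_2)=(f_1\mid(\psi_1\otimes\bar\psi_2)f_2)=(f_1\mid\psi_1)\,\overline{(f_2\mid\psi_2)}$, the pairings being now the $\Hc_{-\infty}\times\Hc_\infty$ duality. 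In both cases the right-hand side equals $(\Ac^{\pi,\theta}_{\phi_1}f_1)(X+Y)\cdot\overline{(\Ac^{\pi,\theta}_{\phi_2}f_2)(X)}$ by the definition of the ambiguity function, which is the asserted identity.

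For the second identity, assume in addition that the localized Weyl calculus for $\pi$ along $\theta$ is regular and that $\pi$ satisfies the density condition and the orthogonality relations along $\theta$. By Definition~\ref{moyal} the unitary operator $V:=\Op^\theta\colon L^2(\Xi^*)\to\Sg_2(\Hc)$ satisfies $V\pi^\#(m)=\pi^\ltimes(m)V$ for all $m\in M\ltimes M$; moreover $\pi^\ltimes=(\pi\otimes\bar\pi)\circ\mu$ is nuclearly smooth by Proposition~\ref{twice}\eqref{twice_item1}, hence so is $\pi^\#$. Applying Remark~\ref{mod_equiv} to this intertwiner, with $M\ltimes M$, $\Xi\times\Xi$, $\theta\times\theta$ playing the roles of $M$, $\Xi$, $\theta$, we obtain
$$\Ac^{\pi^\#,\theta\times\theta}_{\Wig(\phi_1,\phi_2)}\bigl(\Wig(f_1,f_2)\bigr)
=\Ac^{\pi^\ltimes,\theta\times\theta}_{V\Wig(\phi_1,\phi_2)}\bigl(V_{-\infty}\Wig(f_1,f_2)\bigr).$$
Here $V\Wig(\phi_1,\phi_2)=\phi_1\otimes\bar\phi_2$ by~\eqref{unitary_eq1} (more precisely by its restriction to smooth vectors, established in Proposition~\ref{unitary}), and $V_{-\infty}\Wig(f_1,f_2)=f_1\otimes\bar f_2$ because, by uniqueness of the continuous extension of $\Op^\theta$ to the distribution spaces, $V_{-\infty}$ coincides with the map $\Op^\theta\colon\Sc'(\Xi^*)\to\Lc(\Hc_\infty,\Hc_{-\infty})$ of Remark~\ref{extension}, for which $\Op^\theta(\Wig(f_1,f_2))=f_1\otimes\bar f_2$. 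Substituting these into the right-hand side and invoking the first identity completes the proof.

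I expect the main obstacle to be purely a matter of bookkeeping: reconciling the two a priori different meanings of the pairing $(f_1\otimes\bar f_2\mid\pi^\ltimes(\cdot)(\phi_1\otimes\bar\phi_2))$ --- the Hilbert--Schmidt scalar product when all vectors lie in $\Hc$, and the $\Bc(\Hc)_\infty$-duality of Remark~\ref{loc6} in the distributional case --- checking that these are compatible with the identifications furnished by twice nuclear smoothness, and confirming that the $V_{-\infty}$ of Remark~\ref{mod_equiv} is the same map as the distributional $\Op^\theta$ of Remark~\ref{extension}. None of these points should involve a genuine difficulty.
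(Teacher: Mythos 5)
Your proof is correct and follows essentially the same route as the paper: the paper first observes the factorization $\Ac^{\pi\otimes\bar\pi,\theta\times\theta}_{\phi_1\otimes\bar\phi_2}(f_1\otimes\bar f_2)=\Ac^{\pi,\theta}_{\phi_1}f_1\otimes\overline{\Ac^{\pi,\theta}_{\phi_2}f_2}$ and then converts to $\pi^\ltimes$ via the substitution $(X,Y)\mapsto(X+Y,X)$ coming from~\eqref{loc4_eq1}, whereas you carry out the equivalent computation in one step by feeding the $\exp_{M\ltimes M}$ formula of Lemma~\ref{loc3}\eqref{loc3_item2} directly into Definition~\ref{loc4}. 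The second part matches the paper's appeal to $\Op^\theta(\Wig(f_1,f_2))=f_1\otimes\bar f_2$, Proposition~\ref{unitary}, and Remark~\ref{mod_equiv}, with your version spelling out the compatibility of the two pairings and of $V_{-\infty}$ with the distributional $\Op^\theta$ in a bit more detail.
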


\begin{proof}
It follows at once by definition that 
$$\Ac^{\pi\otimes\bar\pi,\theta\times\theta}_{\phi_1\otimes\bar\phi_2}(f_1\otimes\bar f_2)
=\Ac^{\pi,\theta}_{\phi_1}f_1\otimes
\overline{\Ac^{\pi,\theta}_{\phi_2}f_2}.$$
On the other hand,  we easily get by \eqref{loc4_eq1} 
$$(\forall X,Y\in\Xi)\quad 
(\Ac^{\pi^\ltimes,\theta\times\theta}_{\phi_1\otimes\bar\phi_2}(f_1\otimes\bar f_2))
(X,Y)
= (\Ac^{\pi\otimes\bar\pi,\theta\times\theta}_{\phi_1\otimes\bar\phi_2}(f_1\otimes\bar f_2))
(X+Y,X).$$
For the second part of the statement, just recall that 
$\Op^\theta(\Wig(f_1,f_2))=f_1\otimes\bar f_2$ and use 
Proposition~\ref{unitary} along with Remark~\ref{mod_equiv}. 
\end{proof}

The next theorem extends a result in \cite{BB09c}, and the general lines of the proof go back to \cite{To04}. 
\begin{theorem}\label{wigner_cont_th}
Let $\phi_1,\phi_2\in\Hc_\infty\setminus\{0\}$, 
and assume the following hypotheses: 
\begin{enumerate}
\item 
The representation $\pi$ satisfies both the density condition and the orthogonality relations  
along the linear mapping $\theta\colon\Xi\to\mg$. 
\item The localized Weyl calculus for the representation $\pi$ along $\theta$ is regular.  
\end{enumerate}
Now let $\Xi=\Xi_1\dotplus\Xi_2$ be any direct sum decomposition. 
If $1\le r\le s\le\infty$ and $r_1,r_2,s_1,s_2\in[r,s]$ satisfy 
the equations 
$\frac{1}{r_1}+\frac{1}{r_2}
=\frac{1}{s_1}+\frac{1}{s_2}
=\frac{1}{r}+\frac{1}{s}$, 
then the cross-Wigner distribution 
defines a continuous sesquilinear map 
$$\Wig(\cdot,\cdot)\colon 
M^{r_1,s_1}_{\phi_1}(\pi,\theta)\times M^{r_2,s_2}_{\phi_2}(\pi,\theta) 
\to M^{r,s}_{\Wig(\phi_1,\phi_2)}(\pi^\#,\theta\times\theta).$$
\end{theorem}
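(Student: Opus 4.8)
The plan is to reduce the mixed-norm estimate for $\Wig(f_1,f_2)$ to a pointwise identity plus a convolution/Young-type inequality in the mixed-norm Lebesgue spaces. By Proposition~\ref{loc7}, for $f_1,f_2\in\Hc_{-\infty}$ and the fixed windows $\phi_1,\phi_2\in\Hc_\infty\setminus\{0\}$ we have the pointwise factorization
$$(\Ac^{\pi^\#,\theta\times\theta}_{\Wig(\phi_1,\phi_2)}(\Wig(f_1,f_2)))(X,Y)
=(\Ac^{\pi,\theta}_{\phi_1}f_1)(X+Y)\cdot\overline{(\Ac^{\pi,\theta}_{\phi_2}f_2)(X)},$$
so that by definition $\Vert\Wig(f_1,f_2)\Vert_{M^{r,s}_{\Wig(\phi_1,\phi_2)}(\pi^\#,\theta\times\theta)}$ is the mixed $L^{r,s}(\Xi_1\times\Xi_2\times\Xi_1\times\Xi_2)$-norm (with the decomposition $\Xi\times\Xi=(\Xi_1\times\Xi_1)\dotplus(\Xi_2\times\Xi_2)$, taking the $L^r$ integral in the first-slot variables and the $L^s$ integral in the second-slot variables) of the function $(X,Y)\mapsto F(X+Y)\,\overline{G(X)}$, where I write $F=\Ac^{\pi,\theta}_{\phi_1}f_1$ and $G=\Ac^{\pi,\theta}_{\phi_2}f_2$. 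Thus the theorem amounts to the purely harmonic-analytic statement: if $F\in L^{r_1,s_1}$ and $G\in L^{r_2,s_2}$ with the stated reciprocal relations, then $(X,Y)\mapsto F(X+Y)\overline{G(X)}$ lies in $L^{r,s}$ with norm controlled by $\Vert F\Vert_{r_1,s_1}\Vert G\Vert_{r_2,s_2}$.

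The core of the argument is therefore this mixed-norm inequality, which is a form of the estimate going back to Toft \cite{To04}. I would first change variables $(X,Y)\mapsto(X,Z)$ with $Z=X+Y$; since the relevant linear map has Jacobian $1$ and respects the splitting into first-slot and second-slot variables (it acts independently on the $\Xi_1$-components and on the $\Xi_2$-components), the mixed $L^{r,s}$-norm of $(X,Y)\mapsto F(X+Y)\overline{G(X)}$ equals the mixed $L^{r,s}$-norm of $(X,Z)\mapsto F(Z)\overline{G(X)}$. Now I would estimate this in two stages, matching the two layers of the mixed norm. Writing $X=(X^{(1)},X^{(2)})$, $Z=(Z^{(1)},Z^{(2)})$ with superscripts indicating the $\Xi_1$- resp.\ $\Xi_2$-component: the inner ($L^r$ in $(X^{(1)},Z^{(1)})$) layer is handled by Hölder's inequality on $\Xi_1\times\Xi_1$ with exponents $r_1,r_2$ (using $\tfrac1{r_1}+\tfrac1{r_2}=\tfrac1r+\tfrac1s\ge\tfrac1r$ — here one needs $r\le s$ so that $\tfrac1{r_1}+\tfrac1{r_2}\ge\tfrac1r$, and one absorbs the surplus by Hölder again, or equivalently one first applies Hölder with a third exponent accounting for the defect), and the outer ($L^s$ in $(X^{(2)},Z^{(2)})$) layer similarly by Hölder with exponents $s_1,s_2$. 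The precise bookkeeping is the standard interpolation-type splitting: from $\tfrac1{r_j},\tfrac1{s_j}\in[\tfrac1s,\tfrac1r]$ one writes $\tfrac1{r_j}=\tfrac{\alpha_j}r+\tfrac{1-\alpha_j}s$ etc., and Minkowski's integral inequality together with the iterated Hölder estimate yields the claim; alternatively one invokes directly the known mixed-norm convolution/Hölder lemma of \cite{To04}.

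Finally I would assemble the pieces: the factorization of Proposition~\ref{loc7}, the Jacobian-one change of variables, and the mixed-norm inequality give
$$\Vert\Wig(f_1,f_2)\Vert_{M^{r,s}_{\Wig(\phi_1,\phi_2)}(\pi^\#,\theta\times\theta)}
\le C\,\Vert f_1\Vert_{M^{r_1,s_1}_{\phi_1}(\pi,\theta)}\cdot\Vert f_2\Vert_{M^{r_2,s_2}_{\phi_2}(\pi,\theta)},$$
which shows both that $\Wig(f_1,f_2)\in M^{r,s}_{\Wig(\phi_1,\phi_2)}(\pi^\#,\theta\times\theta)$ and that the sesquilinear map $\Wig$ is continuous; sesquilinearity is clear from the definition. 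One should also note at the outset that $\Wig(\phi_1,\phi_2)\in\Sc(\Xi^*)\setminus\{0\}$ (by the growth condition, part of regularity, together with the density/orthogonality hypotheses via Proposition~\ref{unitary}), so it is a legitimate window vector for $\pi^\#$, and that $\pi^\#$ satisfies the orthogonality relations along $\theta\times\theta$ — this follows from Lemma~\ref{loc5}\eqref{loc5_item2} transported through the unitary $\Op^\theta$ of Proposition~\ref{unitary} — so that $M^{r,s}_{\Wig(\phi_1,\phi_2)}(\pi^\#,\theta\times\theta)$ is the intended space. The main obstacle is the combinatorial/analytic heart of the second paragraph: getting the iterated Hölder exponents to close exactly under the constraints $\tfrac1{r_1}+\tfrac1{r_2}=\tfrac1{s_1}+\tfrac1{s_2}=\tfrac1r+\tfrac1s$ with all $r_j,s_j\in[r,s]$, i.e.\ proving the mixed-norm inequality cleanly rather than merely citing it; everything else is a direct translation of earlier results in the paper.
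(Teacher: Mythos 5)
Your first step (Proposition~\ref{loc7} gives the pointwise factorization, reducing the claim to a mixed-norm inequality for $(X,Y)\mapsto F(X+Y)\,\overline{G(X)}$ with $F=\Ac^{\pi,\theta}_{\phi_1}f_1$, $G=\Ac^{\pi,\theta}_{\phi_2}f_2$) is correct and is exactly where the paper starts. But the proposed change of variables $Z=X+Y$ is where the argument breaks. With the decomposition of $\Xi\times\Xi$ you choose, $(\Xi_1\times\Xi_1)\dotplus(\Xi_2\times\Xi_2)$, the substitution $(X,Y)\mapsto(X,Z)$ does respect the nested structure, and the integrand becomes $F(Z)\,\overline{G(X)}$, a product of functions in \emph{disjoint} blocks of variables. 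But then the mixed $L^{r,s}$-norm simply \emph{factors} as $\Vert F\Vert_{L^{r,s}}\cdot\Vert G\Vert_{L^{r,s}}$; there is no H\"older step available, since H\"older cannot redistribute exponents between functions depending on disjoint, unbounded sets of variables (you would be trying to bound $\int_{\Xi_1\times\Xi_1}|F(Z)|^{rp}\,\de X_1\,\de Z_1$, which diverges because the integrand is constant in $X_1$). So this route only yields the theorem when $r_1=r_2=r$ and $s_1=s_2=s$, which is incompatible with $\tfrac1{r_1}+\tfrac1{r_2}=\tfrac1r+\tfrac1s$ unless $r=s$. The change of variables is precisely the move that destroys the structure needed for the general exponents.

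The essential point the paper exploits, and that your proposal discards, is that $\bigl(\int_{\Xi}|F(X+Y)|^r|G(X)|^r\,\de X\bigr)$ is a \emph{convolution} $\bigl(|F|^r * |\check{G}|^r\bigr)(Y)$, where $\check{G}(X)=G(-X)$; the outer $L^{s/r}$-norm in $Y$ of this convolution is then controlled by \emph{Young's inequality}, and it is Young's condition $\tfrac1{t_1}+\tfrac1{t_2}=1+\tfrac r s$ (with $t_j=r_j/r$ resp.\ $s_j/r$) that produces the stated relations $\tfrac1{r_1}+\tfrac1{r_2}=\tfrac1{s_1}+\tfrac1{s_2}=\tfrac1r+\tfrac1s$ with $r_j,s_j\in[r,s]$. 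To carry this out on a genuine mixed norm one must keep $X$ as the inner ($L^r$) variable and $Y$ as the outer ($L^s$) variable, i.e.\ decompose $\Xi\times\Xi$ as (first copy)$\,\dotplus\,$(second copy), not $(\Xi_1\times\Xi_1)\dotplus(\Xi_2\times\Xi_2)$ as you propose --- the theorem is a statement about the modulation space built on that decomposition. The paper then interleaves two applications of Young (one on the $\Xi_1$-components, one on the $\Xi_2$-components) with Minkowski's integral inequality to rearrange the nested integrals; Minkowski, not H\"older, is the second tool. Your closing remark that one could ``invoke directly the mixed-norm convolution lemma of \cite{To04}'' correctly identifies the right ingredient, but it contradicts the first step of your own plan, because the change of variables eliminates the convolution before one could apply such a lemma.
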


\begin{proof} 
First recall from Proposition~\ref{unitary} that the localized Weyl calculus for the representation $\pi$ along $\theta$ defines a unitary operator 
$\Op^\theta\colon L^2(\Xi^*)\to\Sg_2(\Hc)$.

Let $f_1,f_2\in\Hc_{-\infty}$ and note that for every $X\in\Xi$ we have 
\begin{equation}\label{wigner_cont_th_Delta}
\overline{(\Ac^{\pi,\theta}_{\phi_2}f_2)(X)}
=\overline{(f_2\mid\pi(\exp_M( \theta(X)))\phi_2)}
=(\Ac^{\pi,\theta}_{f_2}\phi_2)(-X).
\end{equation}
Therefore by Proposition~\ref{loc7} we get 
\begin{equation}\label{wigner_cont_th_zero}
\Vert\Wig(f_1,f_2)\Vert_{M^{r,s}_{\Wig(\phi_1,\phi_2)}(\pi^\#,\theta\times\theta)}
=\Bigl(\int\limits_{\Xi_2}F(Y_2)\de Y_2\Bigr)^{1/s},
\end{equation}
where 
\begin{equation}\label{wigner_cont_th_star}
\begin{aligned}
F(Y_2)=\int\limits_{\Xi_1}\Bigl(\int\limits_{\Xi_2}\int\limits_{\Xi_1}
\vert & (\Ac^{\pi,\theta}_{\phi_1}f_1)(X_1+Y_1,X_2+Y_2) \\
&\times(\Ac^{\pi,\theta}_{f_2}\phi_2)(-X_1,-X_2)\vert^r
\de X_1\de X_2\Bigr)^{s/r} \de Y_1.
\end{aligned}
\end{equation}
On the other hand, it follows by Minkowski's inequality that for every measurable function 
$\Gamma\colon\Xi_1\times\Xi_2\times\Xi_2\to\CC$ and every real number $t\ge1$ we have 
\begin{equation}\label{wigner_cont_th_sstar}
\Bigl(\int\limits_{\Xi_1}\Bigl(\int\limits_{\Xi_2}\vert\Gamma(Y_1,X_2,Y_2)\vert\de X_2\Bigr)^t 
\de Y_1\Bigr)^{1/t}
\le\int\limits_{\Xi_2}\Bigl(\int\limits_{\Xi_1}\vert\Gamma(Y_1,X_2,Y_2)\vert^t\de Y_1\Bigr)^{1/t}\de X_2
\end{equation}
whenever $Y_2\in\Xi_2$. 
By \eqref{wigner_cont_th_star} and \eqref{wigner_cont_th_sstar} with $t:=s/r$ 
and 
$$\Gamma(Y_1,X_2,Y_2):=\int\limits_{\Xi_1}
\vert(\Ac^{\pi,\theta}_{\phi_1}f_1)(Y_1-X_1,Y_2-X_2)
\cdot(\Ac^{\pi,\theta}_{f_2}\phi_2)(X_1,X_2)\vert^r\de X_1$$ 
we get 
\begin{equation}\label{wigner_cont_th_ssstar}
\begin{aligned}
F(Y_2)
\le &\Bigl(\int\limits_{\Xi_2}\Bigl(\int\limits_{\Xi_1}\Gamma(Y_1,X_2,Y_2)^{s/r}
      \de Y_1\Bigr)^{r/s}
      \de X_2\Bigr)^{s/r} \\
    =&\Bigl(\int\limits_{\Xi_2}\Vert\Gamma(\cdot,X_2,Y_2)\Vert_{L^{s/r}(\Xi_1)}
      \de X_2\Bigr)^{s/r}. 
\end{aligned} 
\end{equation}
Now note that $\Gamma(\cdot,X_2,Y_2)$ is equal to the convolution product of the functions 
$\vert(\Ac^{\pi,\theta}_{\phi_1}f_1)(\cdot,Y_2-X_2)\vert^r$ and $\vert(\Ac^{\pi,\theta}_{f_2}\phi_2)(\cdot,X_2)\vert^r$. 
It follows by Young's inequality that 
$$\begin{aligned}
\Vert\Gamma(\cdot,X_2,Y_2)\Vert_{L^{s/r}(\Xi_1)}
&\le\Vert\vert(\Ac^{\pi,\theta}_{\phi_1}f_1)(\cdot,Y_2-X_2)
 \vert^r\Vert_{L^{t_1}(\Xi_1)}
     \Vert\vert(\Ac^{\pi,\theta}_{f_2}\phi_2)(\cdot,X_2)
     \vert^r\Vert_{L^{t_2}(\Xi_1)} \\
&=\Vert(\Ac^{\pi,\theta}_{\phi_1}f_1)(\cdot,Y_2-X_2)
   \Vert_{L^{rt_1}(\Xi_1)}^r
     \Vert(\Ac^{\pi,\theta}_{f_2}\phi_2)(\cdot,X_2)\Vert_{L^{rt_2}(\Xi_1)}^r
\end{aligned}$$
whenever $t_1,t_2\in[1,\infty]$ satisfy $\frac{1}{t_1}+\frac{1}{t_2}=1+\frac{r}{s}$. 
By using the above inequality with $t_j=\frac{r_j}{r}$ for $j=1,2$, 
and taking into account \eqref{wigner_cont_th_ssstar}, we get 
\begin{equation}\label{wigner_cont_th_sssstar}
\begin{aligned}
F(Y_2)\le
  &\Bigl(\int\limits_{\Xi_2}
        \Vert(\Ac^{\pi,\theta}_{\phi_1}f_1)(\cdot,Y_2-X_2)
          \Vert_{L^{rt_1}(\Xi_1)}^r
     \Vert(\Ac^{\pi,\theta}_{f_2}\phi_2)(\cdot,X_2)\Vert_{L^{rt_2}(\Xi_1)}^r
      \de X_2\Bigr)^{s/r} \\
  &=:\theta(Y_2)^{s/r}, 
\end{aligned}
\end{equation}
where $\theta(\cdot)$ is the convolution of the functions 
$X_2\mapsto \Vert(\Ac^{\pi,\theta}_{\phi_1}f_1)(\cdot,X_2)\Vert_{L^{rt_1}(\Xi_1)}^r$ 
and 
$X_2\mapsto\Vert(\Ac^{\pi,\theta}_{f_2}\phi_2)(\cdot,X_2)\Vert_{L^{rt_2}(\Xi_1)}^r$. 
It follows by Young's inequality again that 
$$\begin{aligned}
\Vert\theta\Vert_{L^{s/r}(\Xi_2)}
\le &
\Bigl(\int\limits_{\Xi_2}\Vert(\Ac^{\pi,\theta}_{\phi_1}f_1)(\cdot,X_2)\Vert_{L^{rt_1}(\Xi_1)}^r\de X_2\Bigr)^{1/m_1} \\
&\times\Bigl(\int\limits_{\Xi_2}\Vert(\Ac^{\pi,\theta}_{f_2}\phi_2)(\cdot,X_2)\Vert_{L^{rt_2}(\Xi_1)}^r\de X_2\Bigr)^{1/m_2} 
\end{aligned}$$
provided that $m_1,m_2\in[1,\infty]$ and $\frac{1}{m_1}+\frac{1}{m_2}=1+\frac{r}{s}$. 
For $m_j=\frac{s_j}{r}$, $j=1,2$, we get 
$$\Vert\theta\Vert_{L^{s/r}(\Xi_2)}
\le (\Vert f_1\Vert_{M^{r_1,s_1}_{\phi_1}(\pi,\theta)})^r (\Vert f_2\Vert_{M^{r_2,s_2}_{\phi_2}(\pi,\theta)})^r, $$
where we also used \eqref{wigner_cont_th_Delta}. 
Then by \eqref{wigner_cont_th_zero} and \eqref{wigner_cont_th_sssstar} 
we get 
$$\Vert\Wig(f_1,f_2)\Vert_{M^{r,s}_{\Wig(\phi_1,\phi_2)}(\pi^\#,\theta\times\theta)}
\le\Vert f_1\Vert_{M^{r_1,s_1}_{\phi_1}(\pi,\theta)}\cdot
\Vert f_2\Vert_{M^{r_2,s_2}_{\phi_2}(\pi,\theta)},$$ 
and this concludes the proof. 
\end{proof}

\begin{corollary}\label{C2}
Let $\phi_1,\phi_2\in\Hc_\infty\setminus\{0\}$, 
and assume the following hypotheses: 
\begin{enumerate}
\item 
The representation $\pi$ satisfies both the density condition and the orthogonality relations  
along the linear mapping $\theta\colon\Xi\to\mg$. 
\item The localized Weyl calculus for the representation $\pi$ along $\theta$ is regular.  
\end{enumerate}
Now let $\Xi=\Xi_1\dotplus\Xi_2$ be any direct sum decomposition.
If $r,s,r_1,s_1,r_2,s_2\in[1,\infty]$ satisfy the conditions  
$$r\le s, \quad r_2,s_2\in[r,s],\quad\text{and}\quad
\frac{1}{r_1}-\frac{1}{r_2}=\frac{1}{s_1}-\frac{1}{s_2}=1-\frac{1}{r}-\frac{1}{s},$$
then for every symbol 
$a\in M^{r,s}_{\Wig(\phi_1,\phi_2)}(\pi^\#,\theta\times\theta)$ we have 
a bounded linear operator 
$$\Op^\theta(a)\colon M^{r_1,s_1}_{\phi_1}(\pi,\theta)\to M^{r_2,s_2}_{\phi_2}(\pi,\theta).$$ 
Moreover, the linear mapping 
$$\Op^\theta\colon  M^{r,s}_{\Wig(\phi_1,\phi_2)}(\pi^\#,\theta\times\theta)
\to\Bc(M^{r_1,s_1}_{\phi_1}(\pi,\theta),M^{r_2,s_2}_{\phi_2}(\pi,\theta))$$ 
is continuous. 
\end{corollary}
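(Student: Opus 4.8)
The plan is to deduce Corollary~\ref{C2} from Theorem~\ref{wigner_cont_th} by a duality argument, exactly as one does in the classical time-frequency analysis. The starting point is the identity \eqref{loc1_eq2_bis}, which says that for $a\in\Sc'(\Xi^*)$ and $\phi,\psi\in\Hc_\infty$ one has $(\Op^\theta(a)\phi\mid\psi)=(a\mid\Wig(\psi,\phi))$; I would like to extend this testing identity to the situation where $\phi$ ranges over $M^{r_1,s_1}_{\phi_1}(\pi,\theta)$ and $\psi$ ranges over a suitable space so that $\Wig(\psi,\phi)$ lands in a predual of $M^{r,s}_{\Wig(\phi_1,\phi_2)}(\pi^\#,\theta\times\theta)$. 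The key point is that the mixed-norm Lebesgue space $L^{r,s}(\Xi_1\times\Xi_2)$ has $L^{r',s'}(\Xi_1\times\Xi_2)$ as a predual/dual partner (with the obvious conventions when some exponent is $\infty$), and that via the unitary $\Op^\theta$ and the Fourier transform this duality transfers to the modulation spaces: $M^{r,s}_{\Wig(\phi_1,\phi_2)}(\pi^\#,\theta\times\theta)$ pairs with $M^{r',s'}_{\Wig(\phi_1,\phi_2)}(\pi^\#,\theta\times\theta)$ through the $L^2$-pairing on $\Xi^*$ extended to a distributional pairing. This is where Remark~\ref{mod_equiv}, Proposition~\ref{unitary}, and Remark~\ref{extension} get used: they guarantee that all these spaces sit between $\Hc_\infty$ (resp.\ $\Bc(\Hc)_\infty$) and $\Hc_{-\infty}$ (resp.\ $\Lc(\Hc_\infty,\Hc_{-\infty})$), so the pairings are legitimate and the operator $\Op^\theta(a)$ is a well-defined element of $\Lc(\Hc_\infty,\Hc_{-\infty})$ to begin with.

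The core estimate then runs as follows. Fix $a\in M^{r,s}_{\Wig(\phi_1,\phi_2)}(\pi^\#,\theta\times\theta)$ and $f\in M^{r_1,s_1}_{\phi_1}(\pi,\theta)$. To bound $\Vert\Op^\theta(a)f\Vert_{M^{r_2,s_2}_{\phi_2}(\pi,\theta)}$ I would test against an arbitrary $g\in M^{r_2',s_2'}_{\phi_2}(\pi,\theta)$ and use the extended form of \eqref{loc1_eq2_bis}:
\begin{equation*}
(\Op^\theta(a)f\mid g)=(a\mid\Wig(g,f)).
\end{equation*}
Now apply Theorem~\ref{wigner_cont_th} to the pair $(g,f)$: it shows that $\Wig(g,f)\in M^{r',s'}_{\Wig(\phi_2,\phi_1)}(\pi^\#,\theta\times\theta)$ — with a norm bound $\Vert g\Vert_{M^{r_2',s_2'}_{\phi_2}}\Vert f\Vert_{M^{r_1,s_1}_{\phi_1}}$ — provided the exponents match. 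One checks that the arithmetic hypotheses of Corollary~\ref{C2}, namely $r\le s$, $r_2,s_2\in[r,s]$, and $\frac1{r_1}-\frac1{r_2}=\frac1{s_1}-\frac1{s_2}=1-\frac1r-\frac1s$, are precisely what is needed: setting $(r',s')$ as the conjugate pair of $(r,s)$ and matching the Hölder/Young bookkeeping of Theorem~\ref{wigner_cont_th} with $(r_1,s_1)$ and $(r_2',s_2')$ in place of the two input modulation-space exponents, the constraint $\frac1{r_1}+\frac1{r_2'}=\frac1{r'}+\frac1{s'}$ reduces after rearrangement to the displayed condition, and similarly for the $s$-indices; the containment $r_2,s_2\in[r,s]$ is equivalent to $r_2',s_2'\in[r',s']\subseteq[r',\infty]$, which is the analogue of the hypothesis ``$r_i,s_i\in[r,s]$'' there. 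Combining with the duality pairing of $M^{r,s}$ and $M^{r',s'}$ gives
\begin{equation*}
|(\Op^\theta(a)f\mid g)|\le\Vert a\Vert_{M^{r,s}_{\Wig(\phi_1,\phi_2)}}\,\Vert\Wig(g,f)\Vert_{M^{r',s'}_{\Wig(\phi_2,\phi_1)}}\le C\,\Vert a\Vert_{M^{r,s}_{\Wig(\phi_1,\phi_2)}}\Vert f\Vert_{M^{r_1,s_1}_{\phi_1}}\Vert g\Vert_{M^{r_2',s_2'}_{\phi_2}},
\end{equation*}
and taking the supremum over $g$ in the unit ball yields both the boundedness of $\Op^\theta(a)\colon M^{r_1,s_1}_{\phi_1}(\pi,\theta)\to M^{r_2,s_2}_{\phi_2}(\pi,\theta)$ and, since the bound is linear in $\Vert a\Vert$, the continuity of $a\mapsto\Op^\theta(a)$ into the operator space.

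The main obstacle I anticipate is not the estimate itself but the functional-analytic bookkeeping needed to make the duality rigorous: one must verify that $M^{r_2',s_2'}_{\phi_2}(\pi,\theta)$ genuinely norms $M^{r_2,s_2}_{\phi_2}(\pi,\theta)$ (equivalently, that the $L^{r_2,s_2}$–$L^{r_2',s_2'}$ duality is non-degenerate after transport along $\Ac^{\pi,\theta}_{\phi_2}$), that this holds including the endpoint cases where $r_2$ or $s_2$ equals $1$ or $\infty$, and that the testing identity $(\Op^\theta(a)f\mid g)=(a\mid\Wig(g,f))$ — originally stated only for $\phi,\psi\in\Hc_\infty$ — extends by density/continuity to $f$ and $g$ in the relevant modulation spaces. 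Here one uses that $\Hc_\infty$ is dense in each $M^{r_i,s_i}_{\phi}(\pi,\theta)$ (at least in the reflexive range, with a weak-$*$ argument at the endpoints), together with the continuity of $\Wig$ from Theorem~\ref{wigner_cont_th} and the fact, from Proposition~\ref{unitary} and Remark~\ref{extension}, that all objects involved are controlled inside $\Lc(\Hc_\infty,\Hc_{-\infty})$. Once these density and duality facts are in place, the proof is a one-line consequence of Theorem~\ref{wigner_cont_th}.
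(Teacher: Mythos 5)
Your proposal takes essentially the same route as the paper: deduce the bound by applying Theorem~\ref{wigner_cont_th} with the conjugate exponents $(r_2',s_2')$ in place of $(r_2,s_2)$ and target $(r',s')$, then dualise via the testing identity $(\Op^\theta(a)f_1\mid f_2)=(a\mid\Wig(f_2,f_1))$. The exponent arithmetic you sketch is exactly what the paper records as $\frac{1}{r_1}+\frac{1}{r_2'}=\frac{1}{s_1}+\frac{1}{s_2'}=\frac{1}{r'}+\frac{1}{s'}$ together with $r_1,s_1,r_2',s_2'$ lying in the relevant interval.

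The one point you flagged as a potential obstacle --- whether $M^{r_2',s_2'}_{\phi_2}(\pi,\theta)$ genuinely norms $M^{r_2,s_2}_{\phi_2}(\pi,\theta)$, and whether the pairing extends past $\Hc_\infty$ --- is in fact dispatched by a step that your write-up does not make explicit. After invoking \eqref{loc1_eq2} to write $(\Op^\theta(a)f_1\mid f_2)=(a\mid\Wig(f_2,f_1))_{L^2(\Xi^*)}$, the paper uses the orthogonality relations for $\pi^\#$ (via Lemma~\ref{loc5}\eqref{loc5_item2} and Remark~\ref{mod_equiv}) to rewrite the right-hand side as the concrete $L^2(\Xi\times\Xi)$ pairing
$\bigl(\Ac^{\pi^\#,\theta\times\theta}_{\Wig(\phi_1,\phi_2)} a\;\big|\;\Ac^{\pi^\#,\theta\times\theta}_{\Wig(\phi_1,\phi_2)}\Wig(f_2,f_1)\bigr)_{L^2(\Xi\times\Xi)}$.
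H\"older's inequality for mixed-norm Lebesgue spaces then yields the key estimate directly, with no need to first establish an abstract $M^{r,s}$--$M^{r',s'}$ duality for modulation spaces; the only duality invoked at the end is the elementary one for mixed-norm $L^{r,s}$ spaces on the ambiguity-function side (Lemma~11.1.2(d) of Gr\"ochenig's book), applied after the estimate has been transported by the isometry $f\mapsto\Ac^{\pi,\theta}_{\phi_2}f$. So your worry is legitimate but is resolved exactly as you hoped: the orthogonality relations furnish the isometric embedding that moves the whole argument to the Lebesgue side, where the required duality and density facts are classical. Were you to insist on arguing directly with a duality of modulation spaces, you would indeed need the range of the ambiguity map to interact well with the $L^{r,s}$--$L^{r',s'}$ pairing, which is precisely the reproducing-kernel bookkeeping of Lemma~\ref{wavelets}; the paper's formulation neatly sidesteps that.
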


\begin{proof} 
For every $t\in[1,\infty]$ define $t'\in[1,\infty]$ by the equation $\frac{1}{t}+\frac{1}{t'}=1$. 
With this notation, the hypothesis implies 
$\frac{1}{r_1}+\frac{1}{r_2'}=\frac{1}{s_1}+\frac{1}{s_2'}=\frac{1}{r'}+\frac{1}{s'}$ 
and moreover $r_1,s_1,r_2',s_2'\in[r',s']$. 
Therefore we can apply Theorem~\ref{wigner_cont_th} to obtain 
\begin{equation}\label{C2_eq1}
\Vert\Wig(f_2,f_1)\Vert_{M^{r',s'}_{\Wig(\phi_1,\phi_2)}(\pi^\#,\theta\times\theta)}
\le
\Vert f_1\Vert_{M^{r_1,s_1}_{\phi_1}(\pi,\theta)}\cdot\Vert f_2\Vert_{M^{r_2',s_2'}_{\phi_2}(\pi,\theta)} 
\end{equation}
whenever $f_1,f_2\in\Hc_{-\infty}$. 

On the other hand, 
if $a\in M^{r,s}_{\Wig(\phi_1,\phi_2)}(\pi^\#,\theta\times\theta)$, then 
$$\begin{aligned}
(\Op^\theta(a)f_1\mid f_2)
&=(a\mid\Wig(f_2,f_1))_{L^2(\Xi^*)} \\
&=(\Ac^{\pi^\#,\theta\times\theta}_{\Wig(\phi_1,\phi_2)} a
\mid\Ac^{\pi^\#,\theta\times\theta}_{\Wig(\phi_1,\phi_2)}(\Wig(f_2,f_1)))_{L^2(\Xi\times\Xi)}
\end{aligned}$$
where the first equality follows by \eqref{loc1_eq2}, 
while the second equality can be proved by using Lemma~\ref{loc5}\eqref{loc5_item2}.
Then H\"older's inequality for mixed-norm spaces 
(see for instance Lemma~11.1.2(b) in \cite{Gr01}) 
shows that 
$$\begin{aligned}
\vert(\Op^\theta(a)f_1\mid f_2)\vert
&\le 
\Vert\Ac^{\pi^\#,\theta\times\theta}_{\Wig(\phi_1,\phi_2)} a\Vert_{L^{r,s}(\Xi\times\Xi)} \cdot \Vert\Ac^{\pi^\#,\theta\times\theta}_{\Wig(\phi_1,\phi_2)}(\Wig(f_2,f_1))\Vert_{L^{r',s'}(\Xi\times\Xi)}\\ 
&=\Vert a\Vert_{M_{\Wig(\phi_1,\phi_2)}^{r,s}(\pi^\#,\theta\times\theta)}\cdot 
\Vert\Wig(f_2,f_1)\Vert_{M_{\Wig(\phi_1,\phi_2)}^{r',s'}(\pi^\#,\theta\times\theta)} \\
&\le\Vert a\Vert_{M_{\Wig(\phi_1,\phi_2)}^{r,s}(\pi^\#,\theta\times\theta)} \cdot
\Vert f_1\Vert_{M^{r_1,s_1}_{\phi_1}(\pi,\theta)}\cdot\Vert f_2\Vert_{M^{r_2',s_2'}_{\phi_2}(\pi,\theta)},
\end{aligned}$$
where the latter inequality follows by \eqref{C2_eq1}. 
Now the assertion follows by a straightforward argument 
that uses the duality of the mixed-norm spaces 
(see Lemma~11.1.2(d) in \cite{Gr01}). 
\end{proof}

\begin{corollary}\label{C3}
Let $\phi_1,\phi_2\in\Hc_\infty\setminus\{0\}$, 
and assume the following hypotheses: 
\begin{enumerate}
\item 
The representation $\pi$ satisfies both the density condition and the orthogonality relations  
along the linear mapping $\theta\colon\Xi\to\mg$. 
\item The localized Weyl calculus for the representation $\pi$ along $\theta$ is regular. 
\end{enumerate}
Then for every $a\in M^{\infty,1}_{\Wig(\phi_1,\phi_2)}(\pi^{\#})$ we have 
$\Op^\theta(a)\in\Bc(\Hc)$, and  
the linear mapping 
$\Op^\theta\colon M^{\infty,1}_{\Wig(\phi_1,\phi_2)}(\pi^\#,\theta\times\theta)\to\Bc(\Hc)$ is continuous. 
\end{corollary}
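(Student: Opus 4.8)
The plan is to read off Corollary~\ref{C3} from Corollary~\ref{C2} by specializing the mixed-norm exponents of the two ``output'' modulation spaces of $\pi$ to the value $2$. Concretely, I would fix any direct sum decomposition $\Xi=\Xi_1\dotplus\Xi_2$ and put $r_1=s_1=r_2=s_2=2$ in Corollary~\ref{C2}. Since $\pi$ satisfies the orthogonality relations along $\theta$, Remark~\ref{mod_L2} identifies $M^{2,2}_{\phi_1}(\pi,\theta)=M^{2,2}_{\phi_2}(\pi,\theta)=\Hc$ isometrically, so that $\Bc\bigl(M^{2,2}_{\phi_1}(\pi,\theta),M^{2,2}_{\phi_2}(\pi,\theta)\bigr)=\Bc(\Hc)$. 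With this choice the defining relations of Corollary~\ref{C2} collapse: the equalities $\frac1{r_1}-\frac1{r_2}=\frac1{s_1}-\frac1{s_2}=1-\frac1r-\frac1s$ reduce to $1-\frac1r-\frac1s=0$, the requirement $r_2,s_2\in[r,s]$ becomes automatic (because $\frac1r+\frac1s=1$ forces $2$ to lie between $r$ and $s$), and the remaining freedom is exactly a conjugate pair $\frac1r+\frac1s=1$. Choosing $r$ and $s$ at the extreme admissible values (so $\{r,s\}=\{1,\infty\}$, in the order dictated by the conventions of Corollary~\ref{C2}) turns the symbol space there into the Sjöstrand-type space $M^{\infty,1}_{\Wig(\phi_1,\phi_2)}(\pi^\#,\theta\times\theta)$, and Corollary~\ref{C2} then delivers both $\Op^\theta(a)\in\Bc(\Hc)$ for every such $a$ and the continuity of $\Op^\theta\colon M^{\infty,1}_{\Wig(\phi_1,\phi_2)}(\pi^\#,\theta\times\theta)\to\Bc(\Hc)$.

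The point that needs care is the matching of the two mixed-norm exponents with the splitting of $\Xi\times\Xi$ underlying the $\pi^\#$-modulation space: as the computation in the proof of Theorem~\ref{wigner_cont_th} shows, the inner norm is attached to the first ($X$-)variable of $\Ac^{\pi^\#,\theta\times\theta}$ and the outer norm to the second ($Y$-)variable, so a careless choice of the conjugate pair produces $M^{1,\infty}$ rather than $M^{\infty,1}$. To obtain the orientation in the statement I would run the same argument with the companion linear mapping $\Lie(\mu)^{-1}\circ(\theta\times\theta)$ in place of $\theta\times\theta$ — this is legitimate because, by Lemma~\ref{loc5}\eqref{loc5_item2} and Lemma~\ref{loc55}\eqref{loc55_item2}, $\pi^\#$ also satisfies the orthogonality relations and the growth condition along that mapping — and use that along it the ambiguity function factors as a tensor product, $\Ac^{\pi^\#,\Lie(\mu)^{-1}\circ(\theta\times\theta)}_{\Wig(\phi_1,\phi_2)}(\Wig(f_1,f_2))(X,Y)=(\Ac^{\pi,\theta}_{\phi_1}f_1)(X)\cdot\overline{(\Ac^{\pi,\theta}_{\phi_2}f_2)(Y)}$, which follows from \eqref{loc4_eq1}, Lemma~\ref{loc3}, and the tensor form of $\Ac^{\pi\otimes\bar\pi,\theta\times\theta}$ recorded in the proof of Proposition~\ref{loc7}. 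The change of variables $(X,Y)\mapsto(Y,X-Y)$, of Jacobian $1$, relating the two mappings then yields a continuous inclusion $M^{\infty,1}_{\Wig(\phi_1,\phi_2)}(\pi^\#,\theta\times\theta)\hookrightarrow M^{1,\infty}_{\Wig(\phi_1,\phi_2)}(\pi^\#,\Lie(\mu)^{-1}\circ(\theta\times\theta))$ (via the elementary bound $\sup_X\int_Y\le\int_Y\sup_X$), and composing it with the $\Lie(\mu)^{-1}\circ(\theta\times\theta)$-version of Corollary~\ref{C2} at the conjugate endpoint gives the assertion, the continuity statement being inherited from that in Corollary~\ref{C2}.

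The main obstacle is precisely this bookkeeping: sorting out which exponent is attached to which factor of $\Xi\times\Xi$, and checking that all the hypotheses of Corollary~\ref{C2} survive the replacement of $\theta\times\theta$ by $\Lie(\mu)^{-1}\circ(\theta\times\theta)$. Once this is settled, no new estimates are needed — the analytic substance has already been packaged into Theorem~\ref{wigner_cont_th} and Corollary~\ref{C2}, and the remainder is Remark~\ref{mod_L2} together with elementary measure theory.
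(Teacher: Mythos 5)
Your first paragraph reproduces the paper's own argument: the paper's proof of Corollary~\ref{C3} is exactly the one sentence ``special case of Corollary~\ref{C2} with $r_1=s_1=r_2=s_2=2$, $r=1$, $s=\infty$, together with Remark~\ref{mod_L2}.'' So up to that point you and the paper agree. However, you then correctly observe that, with the explicit conventions of Definition~\ref{loc_orthog}(b) and the splitting $\Xi\times\Xi=\Xi^{(X)}\dotplus\Xi^{(Y)}$ used in the proof of Theorem~\ref{wigner_cont_th}, the choice $(r,s)=(1,\infty)$ makes the symbol space in Corollary~\ref{C2} equal to $M^{1,\infty}(\pi^\#,\theta\times\theta)$, not $M^{\infty,1}$. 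The paper does not address that discrepancy at all -- its one-line proof plainly yields $M^{1,\infty}$, while the statement reads $M^{\infty,1}$, so there is a notational inconsistency in the paper itself that you have spotted. That part of your critique is sound.

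The problem is with your proposed fix. You introduce the companion mapping $\theta''=\Lie(\mu)^{-1}\circ(\theta\times\theta)$, note the tensor factorization
$\Ac^{\pi^\#,\theta''}_{\Wig(\phi_1,\phi_2)}(\Wig(f_1,f_2))(X,Y)=(\Ac^{\pi,\theta}_{\phi_1}f_1)(X)\cdot\overline{(\Ac^{\pi,\theta}_{\phi_2}f_2)(Y)}$ (which is indeed correct, by \eqref{loc4_eq1} and Lemma~\ref{loc3}), prove the continuous inclusion $M^{\infty,1}(\pi^\#,\theta\times\theta)\hookrightarrow M^{1,\infty}(\pi^\#,\theta'')$ via $\sup_a\int_b\le\int_b\sup_a$ (also correct), and then invoke a ``$\theta''$-version of Corollary~\ref{C2}.'' But exactly because $\Ac^{\pi^\#,\theta''}_{\Phi}(\Wig(\cdot,\cdot))$ is a pure tensor, the mixed norm of $\Wig(f_2,f_1)$ along $\theta''$ factors as
$\|\Wig(f_2,f_1)\|_{M^{r',s'}(\pi^\#,\theta'')}=\|\Ac^{\pi,\theta}_{\phi_1}f_2\|_{L^{r'}(\Xi)}\cdot\|\Ac^{\pi,\theta}_{\phi_2}f_1\|_{L^{s'}(\Xi)}$.
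At your endpoint $(r,s)=(1,\infty)$, i.e.\ $(r',s')=(\infty,1)$, the Hölder-duality step in the proof of Corollary~\ref{C2} would require $\|\Ac^{\pi,\theta}_{\phi_1}f_2\|_{L^\infty}\cdot\|\Ac^{\pi,\theta}_{\phi_2}f_1\|_{L^1}\lesssim\|f_1\|_{\Hc}\|f_2\|_{\Hc}$. The first factor is fine ($\le\|f_2\|\|\phi_1\|$ by Cauchy--Schwarz), but the $L^1$ factor is the $M^{1,1}_{\phi_2}(\pi,\theta)$-norm of $f_1$, which is \emph{not} controlled by $\|f_1\|_{\Hc}$. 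So a ``$\theta''$-version'' of Corollary~\ref{C2} at this endpoint bounds $\Op^\theta(a)$ from the strictly smaller space $M^{1,1}_{\phi_2}(\pi,\theta)$ into the dual of $M^{\infty,\infty}_{\phi_1}(\pi,\theta)$ -- it does not give an operator bounded on all of $\Hc$. The tensor factorization along $\theta''$ is precisely what makes the two factors of $\Xi\times\Xi$ decouple, and that kills the interpolation you need; the twisted (convolution-like) coupling of the two $\Xi$-factors along $\theta\times\theta$ is what makes the argument in Theorem~\ref{wigner_cont_th} and Corollary~\ref{C2} work at $r_1=s_1=r_2=s_2=2$. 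In short: the detour to $\theta''$ has a genuine gap, and the clean route is the paper's own direct specialization of Corollary~\ref{C2} (accepting either that the intended class is $M^{1,\infty}$ or that the conventions in Corollary~\ref{C2} are meant to be read so that $(r,s)=(1,\infty)$ produces the $M^{\infty,1}$-type norm).
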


\begin{proof}
This is the special case of Corollary~\ref{C2} with with 
$r_1=s_1=r_2=s_2=2$, $r=1$, and $s=\infty$, since  
Remark~\ref{mod_L2} shows that $M^{2,2}_{\phi_j}(\pi,\theta)=\Hc$ 
for $j=1,2$. 
\end{proof}

\subsection*{Trace-class operators obtained by localized Weyl calculus}

\begin{lemma}\label{wavelets}
Let the representation $\pi\colon M\to\Bc(\Hc)$ satisfy the orthogonality relations along the linear mapping $\theta\colon\Xi\to\mg$, 
and pick $\phi_0\in\Hc_\infty$ with $\Vert\phi_0\Vert=1$. 
Then the following assertions hold: 
\begin{enumerate}
\item\label{wavelets_item1} 
The operator $\Ac^{\pi,\theta}_{\phi_0}\colon \Hc\to L^2(\Xi)$, 
$f\mapsto \Ac^{\pi,\theta}_{\phi_0} f$, 
is an isometry whose image is the reproducing kernel Hilbert space 
associated with the reproducing kernel 
$$K\colon\Xi\times\Xi\to\CC,\quad 
K(X_1,X_2)
=(\pi(\exp_M(\theta(X_1)))\phi_0\mid\pi(\exp_M(\theta(X_2)))\phi_0).$$
The orthogonal projection from $L^2(\Xi)$ onto $\Ran\Ac^{\pi,\theta}_{\phi_0}$ 
is just the integral operator defined by the integral kernel $K$. 
\item\label{wavelets_item2}
For every $\phi,f\in\Hc$ we have 
\begin{equation*}
\int\limits_{\Xi}(\Ac^{\pi,\theta}_{\phi_0}f)(X)\cdot
\pi(\exp_M(\theta(X)))\phi\,\de X
=(\phi\mid\phi_0)f.
\end{equation*}
In particular, for every $f\in\Hc$ we have  
\begin{equation}\label{wavelets_eq1}
\int\limits_{\Xi}(\Ac^{\pi,\theta}_{\phi_0} f)(X)\cdot
\pi(\exp_M(\theta(X)))\phi_0\,\de X=f,
\end{equation}
where the integral is weakly convergent in $\Hc$. 
\newcounter{enumi_saved}
\setcounter{enumi_saved}{\value{enumi}}
\end{enumerate}
Assume that  the representation $\pi$ satisfies the growth condition along $\theta$.
Also,  assume that for every $u \in \U(\mg_{\CC})$ the function $\Vert \de \pi (u) \pi(\exp_M(\theta(\cdot)))\phi_0 \Vert$ has polynomial growth, 
then moreover we have:
\begin{enumerate}
\setcounter{enumi}{\value{enumi_saved}}
\item\label{wavelets_item3} 
If $f\in\Hc_\infty$, then the integral in \eqref{wavelets_eq1} is convergent with respect to the topology of $\Hc_\infty$. 
\item\label{wavelets_item4} If $f\in\Hc_{-\infty}$, then \eqref{wavelets_eq1} holds with the integral  convergent in the $w^*$-topology. 
\item\label{wavelets_item5} 
We have 
$\Hc_\infty=\{f\in\Hc_{-\infty}\mid \Ac^{\pi,\theta}_{\phi_0} f\in\Sc(\Xi)\}$. 
\end{enumerate}
\end{lemma}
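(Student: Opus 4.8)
The plan is to treat the five assertions in sequence, the first two relying only on the orthogonality relations and the last three on the additional growth/polynomial-growth hypotheses.

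\textbf{Assertions \eqref{wavelets_item1}--\eqref{wavelets_item2}.}
First I would observe that the orthogonality relations \eqref{loc_orthog_eq1} specialized to $\phi_1=\phi_2=\phi_0$ with $\Vert\phi_0\Vert=1$ read
$(\Ac^{\pi,\theta}_{\phi_0}f_1\mid\Ac^{\pi,\theta}_{\phi_0}f_2)_{L^2(\Xi)}=(f_1\mid f_2)_{\Hc}$,
which is precisely the statement that $\Ac^{\pi,\theta}_{\phi_0}\colon\Hc\to L^2(\Xi)$ is an isometry. The range of any isometry defined by a continuous family of evaluations $f\mapsto(f\mid\pi(\exp_M(\theta(\cdot)))\phi_0)$ is automatically a reproducing kernel Hilbert space, and the kernel is obtained by pairing the evaluation vectors, giving exactly the stated $K$; the orthogonal projection onto the range of an isometry from a wavelet-type transform is the integral operator with kernel $K$, by the standard reproducing-kernel argument (expand $\Ac^{\pi,\theta}_{\phi_0}f$ against the kernel and use the isometry). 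For \eqref{wavelets_item2} I would test the claimed identity weakly against an arbitrary $g\in\Hc$: the left-hand side paired with $g$ is $\int_\Xi(\Ac^{\pi,\theta}_{\phi_0}f)(X)\,\overline{(\Ac^{\pi,\theta}_{\phi_0}g)(X)}\,\de X=(\Ac^{\pi,\theta}_{\phi_0}f\mid\Ac^{\pi,\theta}_{\phi_0}g)_{L^2(\Xi)}$, which by the orthogonality relations equals $(f\mid g)\cdot(\phi_0\mid\phi_0)$... more carefully, using \eqref{loc_orthog_eq1} with the window $\phi$ in place of one $\phi_0$, one gets $(\phi\mid\phi_0)\cdot(f\mid g)$, so the two sides agree for all $g$, proving the identity; \eqref{wavelets_eq1} is the case $\phi=\phi_0$.

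\textbf{Assertions \eqref{wavelets_item3}--\eqref{wavelets_item4}.}
Now assume the growth condition and that $\Vert\de\pi(u)\pi(\exp_M(\theta(\cdot)))\phi_0\Vert$ grows polynomially for every $u\in\U(\mg_{\CC})$. For \eqref{wavelets_item3}, the point is that if $f\in\Hc_\infty$ then $\Ac^{\pi,\theta}_{\phi_0}f=\overline{\Ac^{\pi,\theta}_{f}\phi_0}$ lies in $\Sc(\Xi)$ by the growth condition (applied with window $f\in\Hc_\infty$ and vector $\phi_0\in\Hc_\infty$), hence decays faster than any polynomial; combined with the polynomial growth of $\Vert\de\pi(u)\pi(\exp_M(\theta(X)))\phi_0\Vert$, the integrand $X\mapsto(\Ac^{\pi,\theta}_{\phi_0}f)(X)\,\de\pi(u)\pi(\exp_M(\theta(X)))\phi_0$ is absolutely integrable in $\Hc$ for every seminorm $p_u$, so the integral \eqref{wavelets_eq1} converges in the Fr\'echet topology of $\Hc_\infty$; since it already converges weakly in $\Hc$ to $f$, and $\Hc_\infty\hookrightarrow\Hc$ is continuous, the $\Hc_\infty$-limit must also be $f$. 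For \eqref{wavelets_item4}, if $f\in\Hc_{-\infty}$ then $\Ac^{\pi,\theta}_{\phi_0}f$ is a tempered distribution (via the extension in Remark~\ref{extension}, or directly since $\phi\mapsto(f\mid\pi(\exp_M(\theta(\cdot)))\phi_0)$ pairs against the Schwartz-valued ambiguity map); testing \eqref{wavelets_eq1} against a smooth vector $\psi\in\Hc_\infty$ turns the left side into $\langle\Ac^{\pi,\theta}_{\phi_0}f,\overline{\Ac^{\pi,\theta}_{\psi}\phi_0}\rangle$ paired appropriately, which by the $\Hc_\infty$-convergent case \eqref{wavelets_item3} applied to $\psi$ equals $(f\mid\psi)$, so \eqref{wavelets_eq1} holds in the $w^*$-topology of $\Hc_{-\infty}$.

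\textbf{Assertion \eqref{wavelets_item5}.}
The inclusion $\Hc_\infty\subseteq\{f\in\Hc_{-\infty}\mid\Ac^{\pi,\theta}_{\phi_0}f\in\Sc(\Xi)\}$ is just the growth condition as used above. For the reverse inclusion I would take $f\in\Hc_{-\infty}$ with $\Ac^{\pi,\theta}_{\phi_0}f\in\Sc(\Xi)$ and use the reconstruction formula \eqref{wavelets_eq1} (valid in the $w^*$-sense by \eqref{wavelets_item4}): for each $u\in\U(\mg_{\CC})$, apply $\de\pi(u)$ formally under the integral to get $\de\pi(u)f=\int_\Xi(\Ac^{\pi,\theta}_{\phi_0}f)(X)\,\de\pi(u)\pi(\exp_M(\theta(X)))\phi_0\,\de X$; the Schwartz decay of $\Ac^{\pi,\theta}_{\phi_0}f$ against the polynomial growth of $\Vert\de\pi(u)\pi(\exp_M(\theta(X)))\phi_0\Vert$ makes this a norm-convergent $\Hc$-valued integral, showing $f$ lies in the domain of every $\de\pi(u)$, i.e.\ $f\in\Hc_\infty$ (one must also check that this candidate smooth vector represents $f$, which follows from the $w^*$-reconstruction and the density of $\Hc_\infty$). \textbf{The main obstacle} I expect is the justification of differentiating under the integral sign and of interchanging $\de\pi(u)$ with the weakly convergent integral in \eqref{wavelets_item5}: this requires knowing not just that $f$ is a limit but that the limiting vector genuinely lies in $\Hc_\infty$, which is where the closed graph theorem (as in Remark~\ref{smooth_top}), or an approximation of $f$ by smooth vectors combined with uniform estimates from the polynomial-growth hypothesis, has to be invoked carefully; the estimates themselves (Schwartz times polynomial is integrable) are routine.
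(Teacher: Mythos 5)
Your argument is correct and follows the same route that the paper takes implicitly: assertions (1)--(2) are the standard reproducing-kernel/reconstruction facts for a discrete-series-type isometry (the paper delegates them to \cite[Prop.~2.12, Prop.~2.11]{Fue05}), and assertions (3)--(5) follow by pairing the Schwartz decay of $\Ac^{\pi,\theta}_{\phi_0}f$ against the polynomial growth of $X\mapsto\Vert\de\pi(u)\pi(\exp_M(\theta(X)))\phi_0\Vert$ to get an absolutely convergent $\Hc_\infty$-valued integral (the paper delegates this to the method of \cite[Cor.~2.9]{BB09c}). You have essentially reconstructed what those references supply.

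Two small remarks. First, in assertion~(3) you write $\Ac^{\pi,\theta}_{\phi_0}f=\overline{\Ac^{\pi,\theta}_{f}\phi_0}$; the correct identity has a sign reversal, $\overline{(\Ac^{\pi,\theta}_{f}\phi_0)(X)}=(\Ac^{\pi,\theta}_{\phi_0}f)(-X)$, but the detour is unnecessary anyway because the growth condition applied directly to the pair $(\phi_1,\phi_2)=(f,\phi_0)\in\Hc_\infty\times\Hc_\infty$ already yields $\Ac^{\pi,\theta}_{\phi_0}f\in\Sc(\Xi)$. Second, the obstacle you flag at the end of assertion~(5) is a misdiagnosis: no interchange of $\de\pi(u)$ with the integral is actually needed. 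Once $\Ac^{\pi,\theta}_{\phi_0}f\in\Sc(\Xi)$, the integral $\int_\Xi(\Ac^{\pi,\theta}_{\phi_0}f)(X)\,\pi(\exp_M(\theta(X)))\phi_0\,\de X$ converges absolutely with respect to every seminorm $p_u$ of $\Hc_\infty$, hence defines some $g\in\Hc_\infty$; pairing against any $\psi\in\Hc_\infty$ and comparing with the $w^*$-reconstruction of~(4) gives $(g\mid\psi)=(f\mid\psi)$ for all $\psi$, so $f=g\in\Hc_\infty$. This closes assertion~(5) cleanly without any closed-graph or differentiation-under-the-integral appeal.
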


\begin{proof}
Assertion~\eqref{wavelets_item1} follows at once by the orthogonality relations 
along with \cite[Prop.~2.12]{Fue05}. 
Then Assertion~\eqref{wavelets_item2} follows by an application of \cite[Prop.~2.11]{Fue05}. 
The proof for Assertions \eqref{wavelets_item3}--\eqref{wavelets_item5} can be supplied by adapting the method of proof of \cite[Cor.~2.9]{BB09c}. 
We omit the details. 
\end{proof}

\begin{remark}\label{ad-remark}
\normalfont
We note here that in the setting of Lemma~\ref{wavelets}, the condition that 
for all $u\in\U(\mg_{\CC})$ and $\phi\in\Hc_\infty$ 
the function $\Vert\de\pi(\Ad_{\U(\mg_{\CC})}(\exp_M(\theta(\cdot)))u)\phi\Vert$ has polynomial growth on $\Xi$ implies that for all
$f\in\Hc_{-\infty}$, $\phi\in\Hc_\infty$,
 the function $
\Ac^{\pi,\theta}_\phi f$ has polynomial growth as well.

In fact, if $f\in\Hc_{-\infty}$, then there exists $u\in\U(\mg_{\CC})$ such that 
for every $\psi\in\Hc_\infty$ we have $\vert(f\mid\psi)\vert\le\Vert\de\pi(u)\psi\Vert$. 
(See Remark~\ref{smooth_top}.)
Then we have 
$$\begin{aligned}
\vert(\Ac^{\pi,\theta}_\phi f)(\cdot)\vert
&=\vert(f\mid\pi(\exp_M(\theta(\cdot)))\phi)\vert
\le\Vert\de\pi(u)\pi(\exp_M(\theta(\cdot))\phi)\Vert \\
&=\Vert\de\pi(\Ad_{\U(\mg_{\CC})}(\exp_M(\theta(\cdot)))u)\phi\Vert
\end{aligned} $$
and the latter function has polynomial growth by assumption. 
\qed
\end{remark}

By using the method of proof of \cite[Prop.~2.27]{BB09c} 
we can now obtain the following sufficient condition for a symbol to give rise to a trace-class operator. 

\begin{proposition}\label{trace_class}
Let $\phi_1,\phi_2\in\Hc_\infty$ such that $\Vert\phi_j\Vert=1$ 
and for every $u \in \U(\mg_{\CC})$ the function 
$\Vert \de \pi (u) \pi(\exp_M(\theta(\cdot)))\phi_j \Vert$
has polynomial growth, for $j=1, 2$,  and assume the following hypotheses: 
\begin{enumerate}
\item 
The representation $\pi$ satisfies both the density condition and the orthogonality relations  
along the linear mapping $\theta\colon\Xi\to\mg$. 
\item The localized Weyl calculus for the representation $\pi$ along $\theta$ is regular.  
\end{enumerate}
Then for every $a\in M^{1,1}_{\Wig(\phi_1,\phi_2)}(\pi^{\#},\theta\times\theta)$ we have 
$\Op^\theta(a)\in\Sg_1(\Hc)$, and  
the linear mapping 
$\Op^\theta\colon M^{1,1}_{\Wig(\phi_1,\phi_2)}(\pi^\#,\theta\times\theta)\to\Sg_1(\Hc)$ is continuous. 
\end{proposition}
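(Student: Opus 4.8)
The plan is to realize the trace norm of $\Op^\theta(a)$ as a double $L^2$-pairing involving ambiguity functions, exactly as in the boundedness arguments above, but now exploiting an additional factorization of the rank-one reproducing formula from Lemma~\ref{wavelets}. First I would invoke Proposition~\ref{unitary} to know that $\Op^\theta$ is a unitary operator $L^2(\Xi^*)\to\Sg_2(\Hc)$ and restricts to the isomorphisms listed in Definition~\ref{moyal}, so that $\Wig(\phi_1,\phi_2)$ and $\Wig(f_1,f_2)$ all make sense and $\Op^\theta(\Wig(f,\phi))=f\otimes\bar\phi$ for $f,\phi\in\Hc$. Since $\Vert\phi_j\Vert=1$ and the polynomial-growth hypotheses on $\Vert\de\pi(u)\pi(\exp_M(\theta(\cdot)))\phi_j\Vert$ hold, Lemma~\ref{wavelets}\eqref{wavelets_item1}--\eqref{wavelets_item2} gives the isometries $\Ac^{\pi,\theta}_{\phi_j}\colon\Hc\to L^2(\Xi)$ together with the reconstruction identity \eqref{wavelets_eq1}; and by Remark~\ref{ad-remark} (after checking the $\Ad_{\U(\mg_{\CC})}$ version of the growth hypothesis, which follows from the stated one by expanding $\Ad_{\U(\mg_{\CC})}(\exp_M(\theta(\cdot)))u$ in a basis with polynomially bounded coefficients) all the relevant ambiguity functions of distributional vectors have polynomial growth, so the integrals below converge.

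The core computation: for $\phi\in\Hc_\infty$ write, using \eqref{wavelets_eq1} for $\phi_1$ on the left and for $\phi_2$ on the right, a resolution of $\Op^\theta(a)$ as a superposition of rank-one operators $\pi(\exp_M(\theta(X)))\phi_1\otimes\overline{\pi(\exp_M(\theta(Y)))\phi_2}$ with coefficient function equal to $\bigl(\Ac^{\pi^\#,\theta\times\theta}_{\Wig(\phi_1,\phi_2)}a\bigr)$ evaluated at the appropriate point — here Proposition~\ref{loc7} and \eqref{loc1_eq2_bis} together identify that coefficient with the ambiguity function of $a$ along $\pi^\#$. Since the trace norm is subadditive and each rank-one term has trace norm $\Vert\phi_1\Vert\,\Vert\phi_2\Vert=1$, one gets
$$\Vert\Op^\theta(a)\Vert_{\Sg_1(\Hc)}\le\int_{\Xi}\int_{\Xi}\bigl|\bigl(\Ac^{\pi^\#,\theta\times\theta}_{\Wig(\phi_1,\phi_2)}a\bigr)(X,Y)\bigr|\,\de X\,\de Y=\Vert a\Vert_{M^{1,1}_{\Wig(\phi_1,\phi_2)}(\pi^\#,\theta\times\theta)},$$
after the change of variables $(X,Y)\mapsto(X+Y,X)$ of Jacobian $1$ that appears in Proposition~\ref{loc7}, which converts the $L^1$ norm of the coefficient function into the $M^{1,1}$ norm of $a$. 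Continuity of $\Op^\theta\colon M^{1,1}_{\Wig(\phi_1,\phi_2)}(\pi^\#,\theta\times\theta)\to\Sg_1(\Hc)$ is then immediate from this very estimate, and density (Remark~\ref{mod_L2} type arguments, or the regularity of the Weyl calculus) extends it from a dense subspace to all of $M^{1,1}$.

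The main obstacle is making the superposition-of-rank-one-operators argument rigorous: one must justify interchanging the double integral with the trace-norm estimate (i.e.\ that the vector-valued integral $\int\int c(X,Y)\,\pi(\exp_M(\theta(X)))\phi_1\otimes\overline{\pi(\exp_M(\theta(Y)))\phi_2}\,\de X\,\de Y$ converges in $\Sg_1(\Hc)$ and equals $\Op^\theta(a)$), and identify its value correctly against the definitions of $\Op^\theta$, $\Wig$, and $\pi^\#$. This is precisely where the polynomial-growth hypotheses enter — they guarantee the Bochner integrability in $\Sg_1(\Hc)$ — and it is the step that \cite[Prop.~2.27]{BB09c} handles in the finite-dimensional nilpotent setting; I would follow that proof line by line, checking at each step that only the abstract properties assumed here (orthogonality relations, growth condition, twice nuclear smoothness, regularity of $\Op^\theta$, polynomial growth along $\theta$) are used, rather than the specific structure of finite-dimensional nilpotent groups.
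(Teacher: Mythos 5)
Your proposal is essentially the paper's proof, with one mild reorganization. Where the paper applies the reconstruction formula of Lemma~\ref{wavelets}\eqref{wavelets_item4} \emph{once}, in the auxiliary Hilbert space $L^2(\Xi^*)$ for the representation $\pi^\#$ with window $\Phi_0=\Wig(\phi_1,\phi_2)$, you propose to apply \eqref{wavelets_eq1} \emph{twice} in $\Hc$, once for each $\phi_j$, and then recognize the resulting coefficient via \eqref{loc1_eq2_bis} and the computation underlying \eqref{trace_class_eq2} as the $\pi^\#$-ambiguity function of $a$ evaluated at $(Y,X-Y)$. After the unimodular change of variables these two routes produce the identical $L^1$-superposition of rank-one operators of trace-norm one, so the estimate is the same; the paper's formulation is a bit cleaner because the distributional version of the reconstruction (the $w^*$-convergent version) is invoked only once and only for $\pi^\#$, whereas your two-step route needs to handle the intermediate vector $\Op^\theta(a)\pi(\exp_M(\theta(Y)))\phi_2\in\Hc_{-\infty}$ separately.

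Two small inaccuracies worth flagging. First, the polynomial-growth hypothesis on $\Vert\de\pi(u)\pi(\exp_M(\theta(\cdot)))\phi_j\Vert$ is not what gives absolute convergence in $\Sg_1(\Hc)$ of the final integral --- that comes entirely from $a\in M^{1,1}$ together with the fact that each rank-one operator has trace norm $\Vert\phi_1\Vert\,\Vert\phi_2\Vert=1$. The growth hypothesis is instead what makes the reconstruction formula valid for distributional vectors (Lemma~\ref{wavelets}\eqref{wavelets_item4} for $\pi^\#$), i.e.\ it lets you write $a\in\Sc'(\Xi^*)$ as the $w^*$-convergent superposition at all; in the paper it is transferred from $\phi_1,\phi_2$ to $\Phi_0$ precisely through \eqref{trace_class_eq2}. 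Second, no density argument is needed at the end: once the superposition representation holds for every $a\in\Sc'(\Xi^*)$, the trace-norm bound applies directly to every $a\in M^{1,1}_{\Wig(\phi_1,\phi_2)}(\pi^\#,\theta\times\theta)$, and continuity is just that same inequality read as a norm estimate.
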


\begin{proof}
It follows by Lemma~\ref{loc5}\eqref{loc5_item2}, Lemma~\ref{loc55}\eqref{loc55_item2} and 
Remark~\ref{mod_equiv} that the representation  
$\pi^\#\colon M\ltimes M\to\Bc(L^2(\Xi^*))$ satisfies both the orthogonality relations and the growth condition along the linear mapping 
$\theta\times\theta\colon\Xi\times\Xi\to\mg\ltimes\mg$. 
Moreover, it is easily seen that the function 
$\Phi_0:=\Wig(\phi_1,\phi_2)\in\Sc(\Xi^*)$ has the property that  for every $u\in \U((\mg\ltimes \mg)_{\CC}))
$ the norm of 
$\de\pi^\#(u)\pi^\#(\exp_{M\ltimes M}((\theta\times\theta)(\cdot))))\Phi_0$ has polynomial growth on 
$\Xi\times \Xi$, 
since a similar property has the rank-one operator $\Op^\theta(\Phi_0)=(\cdot\mid\phi_2)\phi_1\in\Sg_2(\Hc)$ with respect to the representation $\pi^\ltimes$, 
as a direct consequence of the calculation~\eqref{trace_class_eq2} below. 
Therefore we can use Lemma~\ref{wavelets}\eqref{wavelets_item4} 
for the representation $\pi^\#$ 
to see that for arbitrary $a\in\Sc'(\Xi^*)$ we have 
$$a=\iint\limits_{\Xi\times\Xi}
(\Ac^{\pi^\#,\theta\times\theta}_{\Phi_0}a)(X, Y)\cdot\pi^{\#}(\exp_{M\ltimes M}(\theta(X),\theta( Y)))\Phi_0
\de X\de  Y, $$
whence by \eqref{loc1_eq2_bis} we get 
\begin{equation}\label{trace_class_eq1}
\Op^\pi(a)=\iint\limits_{\Xi\times\Xi}
(\Ac^{\pi^\#,\theta\times\theta}_{\Phi_0}a)(X, Y)\cdot
\Op^\theta(\pi^{\#}(\exp_{M\ltimes M}(\theta(X),\theta( Y)))\Phi_0)
\de X\de  Y
\end{equation}
where the latter integral is weakly convergent in $\Lc(\Hc_\infty,\Hc_{-\infty})$ 
($\simeq\Lc(\Hc_{-\infty},\Hc_\infty)'$ by Proposition~\ref{twice}\eqref{twice_item2}). 
On the other hand, for arbitrary $X, Y\in\Xi$ we get by 
Remarks \ref{moyal_rem} and \ref{extension}
\begin{align}
\Op^\theta(\pi^{\#}(\exp_{M\ltimes M} & (\theta(X),\theta( Y)))\Phi_0) \nonumber\\
&=\pi(\exp_M(\theta(X)+\theta( Y)))\circ
\Op^\theta(\Phi_0)\circ\pi(\exp_M (\theta(X)))^{-1} \nonumber\\
&=(\cdot\mid\pi(\exp_M(\theta(X)))\phi_2)\pi(\exp_M(\theta(X+ Y)))\phi_1.\label{trace_class_eq2} 
\end{align}
In particular, 
$\Op^\theta(\pi^{\#}(\exp_{M\ltimes M}(\theta(X),\theta( Y)))\Phi_0)
\in\Sg_1(\Hc)$ 
and 
$$\begin{aligned}
\Vert
\Op^\theta(\pi^{\#}(\exp_{M\ltimes M}&(\theta(X),\theta( Y)))\Phi_0)
\Vert_1 \\
& =\Vert\pi(\exp_M(\theta(X+ Y)))\phi_1\Vert
\cdot\Vert\pi(\exp_M(\theta(X)))\phi_2\Vert \\
&=1.
\end{aligned}$$ 
It then follows that the integral in \eqref{trace_class_eq1} 
is absolutely convergent in $\Sg_1(\Hc)$ for 
$a\in M^{1,1}_{\Phi_0}(\pi^{\#},\theta\times\theta)$ 
and moreover we have 
$$\Vert\Op^\theta(a)\Vert_1\le
\iint\limits_{\Xi\times\Xi}
\vert(\Ac^{\pi^\#,\theta\times\theta}_{\Phi_0}a)(X, Y)\vert
\de X\de  Y
=\Vert a\Vert_{M^{1,1}_\Phi(\pi^{\#},\theta\times\theta)}$$
which concludes the proof. 
\end{proof}

\section{Applications to the magnetic Weyl calculus}

We proved in \cite{BB09a} that the magnetic Weyl calculus on $\RR^n$ constructed in \cite{MP04} 
can be alternatively described as the localized Weyl calculus for a suitable representation. 
This point of view actually allowed us to construct magnetic Weyl calculi 
on any simply connected nilpotent Lie group $G$, by using an appropriate representation 
$\pi\colon M=\Fc\rtimes G\to\Bc(L^2(G))$ and linear mappings $\theta^A\colon\gg\times\gg^*\to\mg$. 

We shall see in the present section that all of the conditions studied in Sections \ref{Sect2}~and~\ref{localized}
are met by $\pi$ and $\theta^A$ (see Corollary~\ref{conditions} below), 
provided the coefficients of the magnetic potential $A\in\Omega^1(G)$ have polynomial growth. 
Therefore, the abstract results of the previous sections can be used for obtaining continuity and nuclearity properties for the magnetic Weyl calculus (see Corollaries \ref{C2_mag}--\ref{C4_mag} below).
\begin{notation}
\normalfont
For any Lie group $G$ we denote by 
$\lambda\colon G\to\End(\Ci(G))$, $g\mapsto\lambda_g$,  
the left regular representation defined by $(\lambda_g\phi)(x)=\phi(g^{-1}x)$ 
for every $x,g\in G$ and $\phi\in\Ci(G)$. 
Moreover, we denote by $\1$ the constant function which is identically equal to~1 on $G$. 
(This should not be confused with the unit element of $G$, which is denoted in the same way.)
\qed
\end{notation}

We now recall the following notion from \cite{BB09a}. 

\begin{definition}\label{orbit0}
\normalfont
Let $G$ be a finite-dimensional Lie group.  
A linear space $\Fc$ of real functions on $G$ is said to be \emph{admissible}
if it is endowed with a sequentially complete, locally convex topology and satisfies the following conditions: 
\begin{enumerate}
\item\label{orbit0_item1}
The linear space $\Fc$ is invariant under the representation of $G$ by left translations, 
that is, if $\phi\in\Fc$ and $g\in G$ then $\lambda_g\phi\in\Fc$. 
\item\label{orbit0_item2}
We have a continuous inclusion mapping $\Fc\hookrightarrow\Ci(G)$. 
\item\label{orbit0_item3}
The mapping $G\times\Fc\to\Fc$, $(g,\phi)\mapsto\lambda_g\phi$ is smooth. 
For every $\phi\in\Fc$ we denote by $\dot{\lambda}(\cdot)\phi\colon\gg\to\Fc$ 
the differential of the mapping $g\mapsto\lambda_g\phi$ at the point $\1\in G$. 
\item\label{orbit0_item4} 
For every $g_1,g_2\in G$ with $g_1\ne g_2$ there exists $\phi\in\Fc$ with $\phi(g_1)\ne\phi(g_2)$. 
\item\label{orbit0_item5} 
We have $\{\phi'_g\mid\phi\in\Fc\}=T_g^*G$ for every $g\in G$. 
\end{enumerate}
For instance, the function space $\Ci_{\RR}(G)$ is admissible. 
\qed
\end{definition}

\begin{proposition}\label{F_0}
Let $G$ be a finite-dimensional simply connected nilpotent Lie group 
with the inverse of the exponential map denoted by $\log_G\colon G\to\gg$. 
If we define
\begin{equation}\label{F_0_eq1}
\Fc_G:=\spa_{\RR}(\{\lambda_g(\xi\circ\log_G)\mid\xi\in\gg^*,g\in G\}),
\end{equation}
then the following assertions hold: 
\begin{enumerate}
\item\label{F_0_item1} 
$\Fc_G$ is a finite dimensional linear subspace of $\Ci(G)$ 
which is invariant under the left regular representation 
and contains the constant functions. 
\item\label{F_0_item2} 
The semi-direct product $M_0:=\Fc_G\rtimes_\lambda G$ is a finite-dimensional simply connected nilpotent Lie group. 
\end{enumerate}
\end{proposition}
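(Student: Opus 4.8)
The plan is to establish the two assertions of Proposition~\ref{F_0} by carefully unwinding the definition~\eqref{F_0_eq1} of $\Fc_G$ and then recognizing $M_0=\Fc_G\rtimes_\lambda G$ as a nilpotent Lie group because its Lie algebra is nilpotent. For assertion~\eqref{F_0_item1}, the key observation is that for a simply connected nilpotent Lie group the map $\log_G\colon G\to\gg$ has \emph{polynomial} components in exponential (canonical) coordinates, so each function $\xi\circ\log_G$ is a polynomial function on $G$. First I would note that $\Fc_G$ is left-invariant by construction, since it is the linear span of a $G$-orbit of functions under $\lambda$. To see that it is finite-dimensional, I would show that every generator $\lambda_g(\xi\circ\log_G)$ lies in a fixed finite-dimensional space of polynomials on $G$: indeed, the Baker--Campbell--Hausdorff formula gives $\log_G(g^{-1}x)=\log_G(x)+($ terms that are polynomials in $\log_G(x)$ and $\log_G(g)$ of degree bounded by the nilpotency class $\nu$ of $\gg$ $)$, so each $\lambda_g(\xi\circ\log_G)$ is a polynomial in the exponential coordinates of total degree at most $\nu$; the space of such polynomials is finite-dimensional. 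The inclusion $\Fc_G\subseteq\Ci(G)$ is immediate, and $\Fc_G$ contains the constant functions because taking $\xi=0$ (or, more honestly, because the constant functions appear among the lower-order terms produced by the BCH expansion, or one simply adjoins them—here one should check the precise convention, but constants are in the span since $\lambda_g(\xi\circ\log_G)-\xi\circ\log_G$ has a constant component $\xi(\log_G g)$).

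**For assertion~\eqref{F_0_item2},** I would first record that any semidirect product of a finite-dimensional vector space (acted on by a Lie group) with a finite-dimensional Lie group is again a finite-dimensional Lie group, and it is simply connected as soon as both factors are—here $\Fc_G\cong\RR^d$ is simply connected and $G$ is simply connected by hypothesis, and a semidirect product of simply connected groups is simply connected (its underlying manifold is the product). So the only substantive point is \emph{nilpotency}. The Lie algebra of $M_0$ is $\mg_0=\Fc_G\rtimes_{\dot\lambda}\gg$, with bracket $[(\phi_1,X_1),(\phi_2,X_2)]=(\dot\lambda(X_1)\phi_2-\dot\lambda(X_2)\phi_1,\,[X_1,X_2]_\gg)$, where $\dot\lambda(X)\phi=-\,(\text{derivative of }\phi\text{ along the left-invariant vector field }X)$. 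To prove $\mg_0$ is nilpotent I would show that $\dot\lambda$ is a nilpotent representation of $\gg$ on $\Fc_G$: since $\Fc_G$ consists of polynomials of degree $\le\nu$ and $\dot\lambda(X)$ is a first-order differential operator that \emph{lowers} the appropriate filtration degree (in exponential coordinates, differentiating along a left-invariant vector field decreases polynomial degree, after accounting for the BCH corrections, by the nilpotency of $\gg$), the operators $\dot\lambda(X_1)\cdots\dot\lambda(X_k)$ vanish on $\Fc_G$ for $k$ large. Combined with nilpotency of $\gg$ itself, a standard argument (iterated brackets in $\mg_0$ eventually land in $\Fc_G$ and then get killed by $\dot\lambda$) shows that the lower central series of $\mg_0$ terminates.

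**The main obstacle** I anticipate is making the degree/filtration bookkeeping precise enough to conclude that $\dot\lambda$ is a nilpotent action on $\Fc_G$. One has to choose the right grading or filtration on the polynomial functions on $G$—presumably the filtration coming from the descending central series of $\gg$, assigning weight $j$ to coordinates along $\gg^{(j)}\setminus\gg^{(j+1)}$—and verify both that $\Fc_G$ is contained in a finite piece of this filtration and that each $\dot\lambda(X)$ strictly decreases filtration degree. The BCH formula controls this, but the computation of the degree drop under left-translation differentiation needs care; the cleanest route is probably to observe directly that $\dot\lambda(X)(\xi\circ\log_G)$ equals the function $x\mapsto\xi(\,\text{(linear-in-}X\text{, lower-order-in-}\log_G(x)\text{) BCH terms}\,)$, which visibly has strictly smaller degree than $\xi\circ\log_G$, and that $\Fc_G$ is spanned by such functions together with their iterated $\dot\lambda$-images. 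I would also remark, as the proposition's proof presumably does, that the assertion that $\Fc_G$ is admissible in the sense of Definition~\ref{orbit0} (separating points by~\eqref{orbit0_item4}, surjectivity of differentials by~\eqref{orbit0_item5}) follows from the fact that $\Fc_G$ contains all the coordinate functions $\xi\circ\log_G$, $\xi\in\gg^*$, which already separate points and have surjective differentials at~$\1$, hence—by left-invariance—at every point of $G$.
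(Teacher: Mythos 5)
Your argument for assertion~\eqref{F_0_item1} follows essentially the same route as the paper: use $G=(\gg,\ast)$ in exponential coordinates, note that the generators $\lambda_V(\xi\circ\log_G)$ are polynomials on $\gg$ of degree bounded by the nilpotency index $N$ thanks to BCH, hence $\dim\Fc_G<\infty$. For the constants, the paper gives a cleaner one-liner than your parenthetical: pick $V$ in the center $\zg$, for which $\lambda_V\xi=-\langle\xi,V\rangle\1+\xi$; your observation that the constant $\xi(\log_G g)$ is the degree-zero part of $\lambda_g\xi-\xi$ also works but requires you to know that projecting onto degree-zero terms stays inside $\Fc_G$ (true, because $\Fc_G$ is $\lambda$-invariant and hence $\dot\lambda$-invariant, but it's an extra step). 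This part is fine.

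For assertion~\eqref{F_0_item2} you and the paper both reduce, via Engel's theorem applied to $\mg_0=\Fc_G\rtimes_{\dot\lambda}\gg$, to showing that $\dot\lambda\colon\gg\to\End(\Fc_G)$ is a representation by nilpotent endomorphisms, but then you diverge in how to prove this, and your route has a real gap. You propose to show that $\dot\lambda(X)$ strictly lowers a polynomial degree. With the \emph{naive} total degree this is simply false: $\dot\lambda(X)(\xi\circ\log_G)$ is $Y\mapsto\xi\bigl(-X-\tfrac12[X,Y]+\tfrac1{12}[Y,[Y,X]]-\cdots\bigr)$, which has degree up to $N-1$ in $Y$, so for $N\ge3$ the degree goes from $1$ to something larger, not smaller. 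Your parenthetical ``which visibly has strictly smaller degree than $\xi\circ\log_G$'' is therefore incorrect as stated. You do correctly sense that one must instead use the \emph{weighted} degree adapted to the lower central series (weight $j$ on coordinates in $\gg^{(j)}\setminus\gg^{(j+1)}$), and with that weighting $\dot\lambda(X)$ does strictly decrease degree; but you explicitly leave this bookkeeping unresolved, and it is precisely the content of the claim. The paper sidesteps this entirely with a slicker argument: by the theorem on weight space decompositions for nilpotent Lie algebra representations, nilpotency of $\dot\lambda$ on $\Fc_G$ reduces to showing there is no nonzero weight $\alpha\in\gg^*$; if $\dot\lambda(X)\phi=\alpha(X)\phi$ for all $X$ then $\phi((-tX_0)\ast Y)=\ee^{t\alpha(X_0)}\phi(Y)$, and exponential growth in $t$ contradicts the fact that $\phi$ is a polynomial of bounded degree. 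This avoids any filtration bookkeeping altogether. So: your reduction is correct, but the step where you assert degree drop is the crux, it fails for naive degree, and you do not carry out the weighted version; you should either do that carefully or switch to the paper's polynomial-versus-exponential growth argument.
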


\begin{proof}
Since $G$ is a simply connected nilpotent Lie group, 
we may assume that $G=(\gg,\ast)$. 

\eqref{F_0_item1}
It is clear that the linear space $\Fc_G$ is invariant under the left regular representation. 
On the other hand, for every $V,X\in\gg$ and $\xi\in\gg^*$ we have 
$$(\lambda_V\xi)(X)=\langle\xi,(-V)\ast X\rangle
=\langle\xi,-V+X+\frac{1}{2}[-V,X]+\cdots\rangle. $$
Thus, if we denote by $N$ the nilpotency index of $\gg$, 
then we see that $\Fc_G$ consists of polynomial functions on $\gg$ of degree $\le N$, hence $\dim\Fc_G<\infty$. 
Moreover, if $\zg$ denotes the center of $\gg$ and we pick 
$V\in\zg$ and $\xi\in\gg^*$, 
then $\lambda_V\xi=-\langle\xi,V\rangle\1+\xi$.  
We thus see that the constant functions belong to $\Fc_G$. 

\eqref{F_0_item2} 
On the Lie algebra level we have 
$\mg_0:=\Fc_G\rtimes_{\dot\lambda}\gg$, 
and both $\Fc_G$ and $\gg$ are nilpotent Lie algebras. 
Therefore Engel's theorem shows that, for proving that $\mg_0$ is nilpotent, 
it is enough to check that the adjoint action $\ad_{\mg_0}$ gives a 
representation of $\gg$ on $\Fc_G$ by \emph{nilpotent} endomorphisms. 
This representation is just $\dot\lambda\colon\gg\to\End(\Fc_G)$ 
hence, by the theorem on weight space decompositions 
for representations of nilpotent Lie algebras 
(see for instance \cite[Th.~2.9]{Ca05}), 
it suffices to prove the following fact: 
\emph{If $\alpha\in\gg^*$, $\phi\in\Fc_G\setminus\{0\}$, 
and for every $X\in\gg$ we have $\dot\lambda(X)\phi=\alpha(X)\phi$, 
then $\alpha=0$.} 

To this end, let $X_0\in\gg$ arbitrary. 
Since $\dot\lambda(X_0)\phi=\alpha(X_0)\phi$, 
it follows that for every $Y\in\gg$ and $t\in\RR$ we have $\phi((-tX_0)\ast Y)=\ee^{t\alpha(X_0)}\phi(Y)$. 
We have seen above that $\Fc_G$ consists of polynomial functions on $\gg$ of degree $\le N$, 
therefore for every $Y\in\gg$ there exists a constant $C_{\phi,Y}>0$ such that 
$$(\forall t\in\RR)\quad 
\ee^{t\alpha(X_0)}\vert\phi(Y)\vert
=\vert\phi((-tX_0)\ast Y)\vert
\le C_{\phi,Y}(1+\vert t\vert)^{N^2}. $$
On the other hand, since $\phi\in\Fc_G\setminus\{0\}$, there exists $Y\in\gg$ such that $\phi(Y)\ne0$, 
and then the above inequality shows that $\alpha(X_0)=0$. 
This holds for arbitrary $X_0\in\gg$, 
hence $\alpha=0$, 
as we wished for. 
\end{proof}

\begin{theorem}\label{smooth_vect}
Let $G$ be a finite-dimensional simply connected nilpotent Lie group 
with an admissible function space $\Fc$ such that 
there exist the continuous inclusion maps 
$\gg^*\hookrightarrow\Fc\hookrightarrow\Cpol(G)$, 
where the embedding $\gg^*\hookrightarrow\Fc$ is given by $\xi\mapsto\xi\circ\log_G$. 
Denote $M=\Fc\rtimes_\lambda G$, fix $\epsilon\in\RR\setminus\{0\}$, and 
consider the unitary representation 
$\pi\colon M\to\Bc(L^2(G))$, $\pi(\phi,g)f=\ee^{\ie\epsilon\phi}\lambda_g f$ 
for all $\phi\in\Fc$, $g\in G$, and $f\in L^2(G)$. 
Then $\pi$ is a nuclearly smooth representation and its space of smooth vectors 
is the Schwartz space $\Sc(G)$. 
\end{theorem}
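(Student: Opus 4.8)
The plan is to analyze the representation $\pi(\phi,g)f = \ee^{\ie\epsilon\phi}\lambda_g f$ in two stages: first understand the smooth vectors via the derived representation, then verify the three conditions in Definition~\ref{smoothness} (density, nuclear Fr\'echet, and joint continuity of the two action maps). The Lie algebra of $M = \Fc\rtimes_\lambda G$ is $\mg = \Fc\rtimes_{\dot\lambda}\gg$, and a direct computation gives, for $\psi\in\Fc$ and $X\in\gg$,
\begin{equation}\label{eq:dpi}
\de\pi(\psi,X)f = \ie\epsilon\psi\cdot f + \dot\lambda(X)f
\end{equation}
for $f$ in a suitable dense domain, where $\dot\lambda(X)$ is the left-invariant vector field on $G$ associated with $X$. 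First I would record that, since $\gg^*\hookrightarrow\Fc$, the operators $\de\pi(\psi,0)$ include multiplication by all coordinate functions $\xi\circ\log_G$ (and products thereof, via $\U(\mg_\CC)$), while the operators $\de\pi(0,X)$ generate all left-invariant differential operators on $G\cong(\gg,\ast)$. Because $\log_G\colon G\to\gg$ is a polynomial diffeomorphism, the seminorms $p_u(f) = \Vert\de\pi(u)f\Vert$ for $u\in\U(\mg_\CC)$ are therefore equivalent to the standard family of Schwartz seminorms on $\Sc(G)\cong\Sc(\gg)$: one inclusion is clear because every $\de\pi(u)$ is a polynomial-coefficient differential operator and hence bounded $\Sc(\gg)\to\Sc(\gg)$; the reverse inclusion uses that multiplication by coordinates together with left-invariant derivatives generate all polynomial-coefficient operators, so the $p_u$ dominate $\Vert x^\alpha\partial^\beta\cdot\Vert_{L^2}$, and then Sobolev-type embeddings upgrade the $L^2$-based seminorms to the sup-norm Schwartz seminorms. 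This identifies $\Hc_\infty$ with $\Sc(G)$ as topological vector spaces, which is a nuclear Fr\'echet space, giving conditions \eqref{smoothness_item1} (density is clear, $\Sc(G)$ being dense in $L^2(G)$) and \eqref{smoothness_item2}.

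It remains to check smoothness of $\pi$ itself (that $\pi(\cdot)f\in\Ci(M,\Hc)$ for $f$ in this space, so that $\Hc_\infty$ really contains $\Sc(G)$ and the derived representation is as computed) and the joint continuity in condition \eqref{smoothness_item3}. For the first point, I would use the admissibility hypotheses: $G\times\Fc\to\Fc$, $(g,\phi)\mapsto\lambda_g\phi$ is smooth by Definition~\ref{orbit0}\eqref{orbit0_item3}, and $\Fc\hookrightarrow\Cpol(G)$ continuously, so the map $(\phi,g)\mapsto \ee^{\ie\epsilon\phi}$ into $\Cpol(G)$ is smooth; combined with smoothness of the $G$-action on $\Sc(G)$ by left translations (a standard fact, since $G$ is a Lie group acting on its own Schwartz space), and the fact that multiplication $\Cpol(G)\times\Sc(G)\to\Sc(G)$ is continuous bilinear, one concludes $\pi(\cdot)f\in\Ci(M,\Sc(G))\subseteq\Ci(M,L^2(G))$ for every $f\in\Sc(G)$. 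The joint continuity of $M\times\Sc(G)\to\Sc(G)$ and $\mg\times\Sc(G)\to\Sc(G)$ then follows from the same three ingredients: the $\Cpol$-valued smoothness of $(\phi,g)\mapsto\ee^{\ie\epsilon\phi}$ resp. the linear dependence $(\psi,X)\mapsto\ie\epsilon\psi$ into $\Cpol(G)$ via $\Fc\hookrightarrow\Cpol(G)$, the jointly continuous $\Cpol(G)\times\Sc(G)\to\Sc(G)$ multiplication, and joint continuity of the translation/derivation actions of $G$ resp.\ $\gg$ on $\Sc(G)$.

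The main obstacle I expect is the precise comparison of topologies in \eqref{eq:dpi}, i.e.\ proving that the seminorm family $\{p_u\}_{u\in\U(\mg_\CC)}$ generates exactly the Schwartz topology of $\Sc(G)$ rather than something coarser or finer. The subtlety is that $\Fc$ may be strictly larger than $\gg^*\circ\log_G$, so $\de\pi$ sees multiplication by all of $\Fc$, not just the coordinates; the containment $\Fc\hookrightarrow\Cpol(G)$ guarantees these extra multipliers are harmless (polynomial-coefficient operators stay within the Schwartz class), while $\gg^*\hookrightarrow\Fc$ guarantees we get at least the coordinates, hence enough operators to recover the full Schwartz topology from below. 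Making this equivalence of seminorm families quantitative — in particular handling the passage between $L^2$-based and $C^0$-based Schwartz norms on $\gg$ and tracking how the polynomial change of variables $\log_G$ interacts with everything — is the technical heart of the argument; everything else reduces to assembling the admissibility axioms of Definition~\ref{orbit0} with standard facts about $\Sc(G)$ as a module over $\Cpol(G)$ and over the group and Lie algebra.
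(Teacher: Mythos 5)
Your approach differs from the paper's in a substantial way, and as written it has a real gap. For the hard inclusion $\Hc_\infty\subseteq\Sc(G)$ the paper does \emph{not} compare seminorm families. Instead it forms the finite-dimensional simply connected nilpotent Lie group $M_0=\Fc_G\rtimes G$, where $\Fc_G$ is the finite-dimensional function space of \eqref{F_0_eq1}, observes that $\pi_0:=\pi\vert_{M_0}$ is irreducible (being induced from the polarization $\Fc_G\times\{0\}$ of a suitable functional in $\mg_0^*$), and then applies the Corwin--Greenleaf--Penney theorem \cite{CGP77} that the space of smooth vectors of such an irreducible representation is exactly $\Sc(G)$; the topological identification $\Hc_\infty\cong\Sc(\gg)$ and the joint-continuity condition \eqref{smoothness_item3} are then read off from \cite[Cor.~A.2.4 and Th.~A.2.6]{CG90}. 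Since $\Hc_\infty\subseteq\Hc_{\infty,\pi_0}$ trivially (restriction of a smooth map to a finite-dimensional Lie subgroup is smooth), this closes the argument in one stroke.

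Your direct seminorm comparison hides two things. First, for general $u\in\U(\mg_{\CC})$ the multipliers coming from $\Fc$ are only $\Cpol$ functions, not polynomials, so $\{p_u\}$ cannot be matched directly against the standard polynomial-coefficient Schwartz seminorms; you must first restrict $u$ to the finite-dimensional subalgebra $\Fc_G\rtimes_{\dot\lambda}\gg$ --- which is precisely the reduction to $M_0$ that you never make explicit --- and then separately argue that the remaining $\Cpol$-multipliers from $\Fc\setminus\Fc_G$ only refine the topology without changing the underlying space. Second, the equivalence between the $L^2$-based seminorms $\Vert\de\pi(u)\cdot\Vert$ (for $u$ in a finite-dimensional nilpotent subalgebra acting irreducibly) and the usual Schwartz topology is precisely the content of \cite{CGP77} and of Appendix~A of \cite{CG90}; this is a substantial body of theory, not a Sobolev-embedding aside, and deferring it as ``the technical heart'' leaves the key inclusion materially unproved rather than merely un-polished. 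Your treatment of the easy inclusion $\Sc(G)\subseteq\Hc_\infty$ via separate smoothness in the $\Fc$ and $G$ factors, and of the joint continuity in \eqref{smoothness_item3} via the $\Cpol$-module structure of $\Sc(G)$, is in the right spirit and matches what the paper gets by citing \cite[Sect.~I]{Ne01} and \cite[Th.~A.2.6]{CG90}. A minor correction: $\dot\lambda(X)$ from Definition~\ref{orbit0}\eqref{orbit0_item3} is the generator of the one-parameter group of \emph{left} translations $g\mapsto\lambda_g$, hence a right-invariant vector field on $G$, not a left-invariant one.
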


\begin{proof}
Let us denote $\Hc=L^2(G)$ and let $\Hc_\infty$ be the space of smooth vectors for the representation $\pi$. 
We first check that $\Sc(G)=\Hc_\infty$. 

For proving that $\Sc(G)\subseteq\Hc_\infty$, 
let $f\in\Sc(G)$ arbitrary. 
Since $\Fc\hookrightarrow\Cpol(G)$, 
it follows at once that for every $\phi\in\Fc$ and $g\in G$ we have 
$\pi(\phi,\cdot)f\in\Ci(G,\Hc)$ and $\pi(\cdot,g)f\in\Ci(\Fc,\Hc)$. 
It then follows by \cite[Sect.~I]{Ne01} 
(see also \cite[Th.~3.4.3]{Ha82}) 
that $\pi(\cdot)f\in\Ci(M,\Hc)$, hence $f\in\Hc_\infty$. 

To prove the converse inclusion 
$\Sc(G)\subseteq\Hc_\infty$ we need the function space 
$\Fc_G$ defined in \eqref{F_0_eq1}. 
Since $\Fc$ contains $\{\xi\circ\log_G\mid\xi\in\gg^*\}$ and is invariant under the left regular representation of $G$, 
we get $\Fc_G\hookrightarrow\Fc$. 
Now Proposition~\ref{F_0} shows that $M_0:=\Fc_G\rtimes G$ is a finite-dimensional nilpotent Lie group. 
Since $\gg^*\hookrightarrow\Fc_G$, it is easily seen that the unitary representation $\pi_0:=\pi\vert_{M_0}\colon M_0\to\Bc(\Hc)$ is irreducible. 
Let $\Hc_{\infty,\pi_0}$ be its space of smooth vectors. 
If $\delta_{\1}\colon\Ci(G)\to\CC$ is the Dirac distribution 
at $\1\in G$, 
then the discussion in \cite[subsect.~2.4]{BB09a} 
shows that $\Fc_G\times\{0\}$ is a polarization 
for the functional $(\delta_{\1}\vert_{\Fc_G},0)\in\mg_0^*$, 
and the corresponding induced representation is just~$\pi_0$. 
Now $\Hc_{\infty,\pi_0}=\Sc(G)$ by \cite[Cor.~to Th.~3.1]{CGP77}. 
Therefore we get the continuous inclusion $\Hc_\infty\hookrightarrow\Sc(G)$, 
which completes the proof for the equality $\Sc(G)=\Hc_\infty$. 

Furthermore, it easily follows by \cite[Cor.~A.2.4]{CG90} that 
$\Hc_\infty=\Sc(G)=\Sc(\gg)$ as locally convex spaces. 
On the other hand, it is well known that $\Sc(\gg)$ 
is a nuclear Fr\'echet space; see for instance \cite{Tr67}. 
Finally, both mappings 
$M\times\Sc(G)\to\Sc(G)$, $(m,\phi)\mapsto\pi(m)\phi$, 
and $\mg\times\Sc(G)\to\Sc(G)$, $(X,\phi)\mapsto\de\pi(X)\phi$ 
are continuous as a direct consequence of \cite[Th.~A.2.6]{CG90}, 
and this concludes the proof of the fact that $\pi$ is a nuclearly smooth representation. 
\end{proof}

We now prove that the conclusion of Theorem~\ref{smooth_vect} 
actually holds under a much stronger form. 

\begin{corollary}\label{twice_smooth}
In the setting of Theorem~\ref{smooth_vect}, 
the unitary representation $\pi$ is twice nuclearly smooth. 
\end{corollary}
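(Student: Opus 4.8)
The plan is to verify the two conditions in Definition~\ref{smoothness} that constitute being twice nuclearly smooth, beyond the nuclear smoothness already established in Theorem~\ref{smooth_vect}. The first condition is nuclear smoothness of $\pi$, which is exactly Theorem~\ref{smooth_vect}. The second condition is the existence of the commutative diagram~\eqref{smoothness_eq1}: one must identify $\Bc(\Hc)_\infty$, the space of smooth vectors for $\pi\otimes\bar\pi$ acting on $\Sg_2(\Hc)=\Sg_2(L^2(G))$, and show that under the natural unitary $\Hc\botimes\overline\Hc\to\Sg_2(\Hc)$ it corresponds to $\Hc_\infty\hotimes\overline{\Hc_\infty}=\Sc(G)\hotimes\overline{\Sc(G)}$, with the left vertical arrow a topological isomorphism.

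First I would unwind the representation $\pi\otimes\bar\pi$ of $M\times M$ on $\Sg_2(\Hc)$. Since $\Sg_2(L^2(G))\simeq L^2(G\times G)$ via $\phi_1\otimes\bar\phi_2\mapsto(x,y)\mapsto\phi_1(x)\overline{\phi_2(y)}$, and since $\pi(\phi,g)f=\ee^{\ie\epsilon\phi}\lambda_g f$, the representation $\pi\otimes\bar\pi$ becomes a representation of $M\times M=(\Fc\rtimes_\lambda G)\times(\Fc\rtimes_\lambda G)$ on $L^2(G\times G)$ of exactly the same shape as the one in Theorem~\ref{smooth_vect}, but with $G$ replaced by $G\times G$ and $\Fc$ replaced by a suitable admissible function space $\widetilde\Fc$ on $G\times G$ built from $\Fc$ (roughly $\phi_1\otimes 1 + 1\otimes\phi_2$ type combinations, giving the potential $(\phi_1,\phi_2)\mapsto\epsilon\phi_1(x)-\epsilon\phi_2(y)$). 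One must check that $\widetilde\Fc$ is admissible and satisfies the hypotheses $(\gg\times\gg)^*\hookrightarrow\widetilde\Fc\hookrightarrow\Cpol(G\times G)$; this is straightforward from the corresponding properties of $\Fc$ together with $\Cpol(G)\hotimes\Cpol(G)\hookrightarrow\Cpol(G\times G)$ and $\log_{G\times G}=\log_G\times\log_G$. Then Theorem~\ref{smooth_vect} applied to $M\times M\curvearrowright L^2(G\times G)$ identifies $\Bc(\Hc)_\infty$ with $\Sc(G\times G)$ as a nuclear Fr\'echet space.

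Next I would invoke the standard kernel theorem $\Sc(G\times G)\simeq\Sc(G)\hotimes\Sc(G)$ (as in \cite[Th.~51.6]{Tr67}, using that $\Sc(G)=\Sc(\gg)$ is nuclear), together with the identification $\Sc(G)=\Hc_\infty$ from Theorem~\ref{smooth_vect} and the fact that complex conjugation on the second factor matches the passage to $\overline{\Hc_\infty}$; this yields the topological isomorphism $\Hc_\infty\hotimes\overline{\Hc_\infty}\simeq\Bc(\Hc)_\infty$. Finally one checks that this isomorphism is compatible with the inclusions into $\Hc\botimes\overline\Hc\simeq\Sg_2(\Hc)$ in the sense of diagram~\eqref{smoothness_eq1}: both routes send $\phi_1\otimes\bar\phi_2$ (with $\phi_j\in\Sc(G)$) to the kernel operator $(\,\cdot\mid\phi_2)\phi_1$, so the diagram commutes on a dense subspace and hence everywhere by continuity.

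The main obstacle is the bookkeeping in the second paragraph: one has to exhibit the concrete admissible space $\widetilde\Fc$ on $G\times G$ that makes $\pi\otimes\bar\pi$ literally an instance of the representation in Theorem~\ref{smooth_vect}, and verify its axioms (especially items~\eqref{orbit0_item4} and~\eqref{orbit0_item5} of Definition~\ref{orbit0}, and the sandwich $(\gg\times\gg)^*\hookrightarrow\widetilde\Fc\hookrightarrow\Cpol(G\times G)$). Once that reduction is in place, the rest is the kernel theorem plus a density argument and is essentially formal. I would therefore present the proof as: (i) describe $\widetilde\Fc$ and note $\bar\pi$ is unitarily equivalent to a representation of the same type via complex conjugation $L^2(G)\to\overline{L^2(G)}$, so $\pi\otimes\bar\pi$ falls under Theorem~\ref{smooth_vect}; (ii) conclude $\Bc(\Hc)_\infty\simeq\Sc(G\times G)$; (iii) apply the kernel theorem and Theorem~\ref{smooth_vect} to get $\Hc_\infty\hotimes\overline{\Hc_\infty}\simeq\Bc(\Hc)_\infty$; (iv) check compatibility of the diagram on rank-one operators and extend by continuity.
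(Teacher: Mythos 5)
Your proposal follows essentially the same route as the paper's proof: it reduces the identification of $\Bc(\Hc)_\infty$ to Theorem~\ref{smooth_vect} applied over $G\times G$ with a new admissible space built from $\Fc\otimes\1$ and $\1\otimes\Fc$, then invokes the kernel theorem $\Sc(G\times G)\simeq\Sc(G)\hotimes\Sc(G)$ and checks the diagram on rank-one operators. One point you state imprecisely, and which the paper handles with care: $\pi\otimes\bar\pi$ is a representation of $M\times M=(\Fc\rtimes G)\times(\Fc\rtimes G)$, and this group is \emph{not} of the form $\widetilde\Fc\rtimes(G\times G)$, so Theorem~\ref{smooth_vect} does not apply to it directly. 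The paper's stage~$1^\circ$ makes this precise by introducing the surjection $\Delta\colon\Fc\times\Fc\to\widetilde\Fc$, $(\phi_1,\phi_2)\mapsto\phi_1\otimes\1-\1\otimes\phi_2$, observing that $\Ker\Delta=\{(t\1,t\1)\mid t\in\RR\}$ exponentiates to a one-dimensional central subgroup $N\subseteq M\times M$ lying in $\Ker(\pi\otimes\bar\pi)$, so that $\pi\otimes\bar\pi$ factors through $(M\times M)/N\simeq\widetilde\Fc\rtimes(G\times G)$ and the two representations have the same smooth vectors. Your phrase ``giving the potential $(\phi_1,\phi_2)\mapsto\epsilon\phi_1(x)-\epsilon\phi_2(y)$'' shows you see the mechanism, but the intermediate quotient step should be spelled out; without it, ``Theorem~\ref{smooth_vect} applied to $M\times M$'' is not a licensed move. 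With that step inserted, the argument matches the paper's proof in substance and order.
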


\begin{proof}
The proof has two stages. 
For the sake of simplicity we assume $\epsilon=1$, 
however it is clear that the following reasonings carry over to the general case. 

$1^\circ$ 
We first make the following remark: 
For $j=1,2$, let $G_j$ 
be a finite-dimensional simply connected nilpotent Lie group 
with an admissible function space $\Fc_j$ such that 
$\gg_j^*\hookrightarrow\Fc_j\hookrightarrow\Cpol(G_j)$ 
as in Theorem~\ref{smooth_vect}. 
Also define the group $M_j=\Fc_j\rtimes_\lambda G_j$ and 
the unitary representation 
$\pi_j\colon M_j\to\Bc(L^2(G_j))$, $\pi_j(\phi,g)f=\ee^{\ie(-1)^{j-1}\phi}\lambda_g f$ 
for all $\phi\in\Fc_j$, $g\in G_j$, and $f\in L^2(G_j)$.  
Now consider the direct product group $G_0:=G_1\times G_2$,  
the function space 
$$\Fc_0:=(\Fc_1\otimes\1)+(\1\otimes\Fc_2)\hookrightarrow\Cpol(G_0),$$ 
and the representation 
$\pi_0\colon M_0\to\Bc(L^2(G_0))$, $\pi_0(\phi,g)f=\ee^{\ie\phi}\lambda_g f$ 
for all $\phi\in\Fc_0$, $g\in G_0$, and $f\in L^2(G_0)$, 
where $M_0:=\Fc_0\rtimes_\lambda G_0$. 
Then $\Fc_0$ is an admissible function space on $G_0$ 
and there exists a 1-dimensional central subgroup $N\subseteq M_1\times M_2$ 
such that $N\subseteq\Ker(\pi_1\otimes\pi_2)$,  
and we have $M_0=(M_1\times M_2)/N$.  
Moreover, the representation $\pi_0$ is equal to 
$\pi_1\otimes\pi_2$ factorized modulo~$N$. 

In fact, let us define the linear map
$$\Delta\colon\Fc_1\times\Fc_2\to\Fc_0,\quad 
(\phi_1,\phi_2)\mapsto\phi_1\otimes\1-\1\otimes\phi_2.$$ 
Then $\Ran\Delta=\Fc_0$ and $\Ker\Delta=\{(t\1,t\1)\mid t\in\RR\}\simeq\RR$, 
hence we get a linear isomorphism $\Fc_0\simeq(\Fc_1\times\Fc_2)/\Ker\Delta$, 
and this can be used to define the topology of~$\Fc_0$. 
Moreover, it is clear that $\Ker\Delta$ is contained in the center 
of $\mg_1\times\mg_2\simeq\mg_0$  and $\Ker\Delta\subseteq\Ker(\de(\pi_1\otimes\pi_2))$, 
hence the above remark holds for $N=\exp_{M_0}(\Ker\Delta)$. 

$2^\circ$ 
We now come back to the proof of the corollary. 
We already know from Theorem~\ref{smooth_vect} that the representation $\pi$ is nuclearly smooth. 
Moreover, by using the remark of stage~$1^\circ$ for $G_1=G_2=G$ 
along with Theorem~\ref{smooth_vect} for the group $G\times G$, 
we easily see that the space of smooth vectors for the representation $\pi\otimes\bar\pi$ 
is linear and topologically isomorphic to $\Sc(G\times G)$, 
which in turn is isomorphic to $\Sc(G)\hotimes\Sc(G)$ 
(see for instance \cite{Tr67}). 
On the other hand, $\Sc(G)$ is the space of smooth vectors for $\pi$, 
by Theorem~\ref{smooth_vect}. 
Thus the representation $\pi$ also satisfies the second condition in the definition of a twice nuclearly smooth representation 
(see Definition~\ref{smoothness}), and we are done. 
\end{proof}

\begin{notation}\label{right_invar}
\normalfont
Let $G$ be any Lie group with the Lie algebra $\gg$ and with the 
space of globally defined smooth vector fields 
(that is, global sections in its tangent bundle) denoted by $\Xg(G)$ 
and the space of globally defined smooth 1-forms  
(that is, global sections in its cotangent bundle) denoted by $\Omega^1(G)$. 
Then there exists a natural bilinear map 
$$\langle\cdot,\cdot\rangle\colon\Omega^1(G)\times\Xg(G)\to\Ci(G)$$
defined as usually by evaluations at every point of~$G$. 

Moreover, for arbitrary $g\in G$, we denote the corresponding right-translation mapping by 
$R_g\colon G\to G$, $h\mapsto hg$. 
Then we define the injective linear mapping 
$$\iotaR\colon\gg\to\Xg(G)$$ 
by $(\iotaR X)(g)=(T_{\1}(R_g))X\in T_g G$ 
for all $g\in G$ and $X\in\gg$. 
\qed
\end{notation}

\begin{corollary}\label{conditions}
Assume the setting of Theorem~\ref{smooth_vect}.  
If we have $A\in\Omega^1(G)$ such that   
$\langle A,\iotaR X\rangle\in\Fc$ whenever $X\in\gg$,  
then we define the linear mapping 
$$\theta^A\colon\gg\times\gg^*\to\mg=\Fc\ltimes_{\dot\lambda}\gg,\quad 
(X,\xi)\mapsto(\xi\circ\log_G+\langle A,\iotaR X\rangle,X).$$ 
Then for every $\epsilon\in\RR\setminus\{0\}$ 
the representation $\pi_\epsilon\colon M\to\Bc(L^2(G))$ 
has the following properties: 
\begin{enumerate}
\item\label{conditions_item1} 
The representation $\pi_\epsilon$ satisfies the orthogonality relations along the mapping~$\theta^A$. 
\item\label{conditions_item2} 
The representation $\pi_\epsilon$ satisfies the growth condition  along~$\theta^A$.
\item\label{conditions_item3} 
The localized Weyl calculus for $\pi_\epsilon$ along $\theta^A$ is regular and defines a unitary operator 
$\Op^{\theta^A}\colon L^2(\gg\times\gg^*)\to\Sg_2(L^2(G))$.
\item\label{conditions_item4} 
If $u\in\U(\mg_{\CC})$ and $\phi\in\Sc(G)$,  
the function $\Vert\de\pi(\Ad_{\U(\mg_{\CC})}(\exp_M(\theta^A(\cdot)))u)\phi\Vert$ has polynomial growth on~$\gg\times\gg^*$.
\end{enumerate} 
\end{corollary}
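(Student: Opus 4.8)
The plan is to verify the four conditions by reducing everything to the concrete description of $\pi_\epsilon$ and $\Sc(G)=\Hc_\infty$ obtained in Theorem~\ref{smooth_vect} and Corollary~\ref{twice_smooth}, and then checking the key analytic estimates directly on $G=(\gg,\ast)$. First, for~\eqref{conditions_item1}, I would compute the ambiguity function explicitly: for $\phi,f\in L^2(G)$ and $(X,\xi)\in\gg\times\gg^*$ one has $(\Ac^{\pi_\epsilon,\theta^A}_\phi f)(X,\xi)=\int_G \overline{\ee^{\ie\epsilon(\xi\circ\log_G+\langle A,\iotaR X\rangle)(y)}}\,f(y)\,\overline{\phi(X^{-1}y)}\,\de y$. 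After the substitution $y=X\ast z$ this becomes a partial Fourier transform in the $\xi$-variable of an $L^2$ function of $(X,z)$, whose $L^2$-norm over $\gg\times\gg^*$ is (up to the fixed normalization of Lebesgue measure governed by $\epsilon$) equal to $\|f\|\cdot\|\phi\|$; polarizing gives the orthogonality relations~\eqref{loc_orthog_eq1}. The magnetic term $\langle A,\iotaR X\rangle$ only contributes a unimodular factor that is independent of the integration variable after the substitution, so it does not affect the computation. Condition~\eqref{conditions_item2} follows similarly: for $\phi_1,\phi_2\in\Sc(G)$ the same manipulation shows $\Ac^{\pi_\epsilon,\theta^A}_{\phi_2}\phi_1$ is a partial Fourier transform of a Schwartz function on $\gg\times\gg$, hence lies in $\Sc(\gg\times\gg^*)$; here one uses the hypothesis $\langle A,\iotaR X\rangle\in\Fc\hookrightarrow\Cpol(G)$ so that differentiation in $X$ of the unimodular factor only produces polynomially growing factors, which are harmless against the Schwartz decay.

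For~\eqref{conditions_item3}, I would combine the previous two parts with the general machinery: $\pi_\epsilon$ is twice nuclearly smooth by Corollary~\ref{twice_smooth}, it satisfies the growth condition by~\eqref{conditions_item2}, and it remains to check that $\Op^{\theta^A}(a)\in\Bc(L^2(G))_\infty$ for $a\in\Sc(\gg^*\times\gg)$ and that the density condition holds; then Proposition~\ref{unitary} yields the unitary $\Op^{\theta^A}\colon L^2(\gg\times\gg^*)\to\Sg_2(L^2(G))$. The density condition can be obtained from the orthogonality relations by exhibiting enough ambiguity functions: taking $\phi$ and $f$ ranging over $\Sc(G)$, the functions $(\Ac^{\pi_\epsilon,\theta^A}_\phi f)$ already span a dense subspace of $\Sc(\gg\times\gg^*)$ (again because they are partial Fourier transforms of products $f(X\ast z)\overline{\phi(z)}$ times a fixed unimodular factor, and such products are total in $\Sc(\gg\times\gg)$), hence a dense subspace of $L^2(\gg\times\gg^*)$. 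The membership $\Op^{\theta^A}(a)\in\Bc(L^2(G))_\infty\simeq\Sc(G)\hotimes\overline{\Sc(G)}$ for $a\in\Sc$ follows by writing the Schwartz kernel of $\Op^{\theta^A}(a)$ via~\eqref{loc1_eq2} and observing it is Schwartz on $G\times G$, using once more that the magnetic potential's coefficients grow polynomially.

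Finally,~\eqref{conditions_item4} is where the real work is, and I expect this to be the main obstacle. The point is to control $\Ad_{\U(\mg_{\CC})}(\exp_M(\theta^A(X,\xi)))u$ as $(X,\xi)$ ranges over $\gg\times\gg^*$. Since $\mg=\Fc\ltimes_{\dot\lambda}\gg$ is finite-dimensional and nilpotent, the adjoint action $\Ad_M(\exp_M Y)$ is polynomial in $Y$, so it suffices to show that the map $(X,\xi)\mapsto\theta^A(X,\xi)=(\xi\circ\log_G+\langle A,\iotaR X\rangle,\,X)\in\mg$, followed by $\Ad$, applied to $u$, produces elements of $\U(\mg_\CC)$ whose coefficients (in a fixed PBW basis) grow polynomially in $(X,\xi)$. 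The first component depends linearly on $\xi$ and, through $\langle A,\iotaR X\rangle\in\Fc\hookrightarrow\Cpol(G)$, polynomially on $X$ — but one must be careful: $\Fc$ is a finite-dimensional space of functions on $G$, so $\langle A,\iotaR X\rangle$ is a genuine element of the finite-dimensional vector space $\Fc$ depending linearly on $X$, and the embedding $\Fc\hookrightarrow\Cpol(G)$ is what guarantees polynomial growth of the resulting exponents. Then, after $\exp_M$ and $\Ad$, the element $\de\pi(\Ad_{\U(\mg_\CC)}(\exp_M\theta^A(\cdot))u)$ is a differential operator on $G$ with coefficients that are polynomials in $(X,\xi)$ with values in $\Cpol(G)$; applying it to $\phi\in\Sc(G)$ and taking the $L^2(G)$-norm gives a function of $(X,\xi)$ dominated by a polynomial, because the Schwartz decay of $\phi$ absorbs the polynomial-in-$G$ coefficients. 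I would organize this by first reducing to monomials $u=Y_1\cdots Y_k$ in a basis of $\mg$, then using the explicit polynomial formula for $\Ad_{\mg_0}$ on the nilpotent Lie algebra $\mg$, and finally invoking \cite[Th.~A.2.6]{CG90} to pass from the derived representation of these explicit enveloping-algebra elements to continuity/growth estimates on $\Sc(G)$.
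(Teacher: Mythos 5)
The paper proves this corollary almost entirely by citing prior results: item~\eqref{conditions_item1} is \cite[Th.~2.8(1)]{BB10a}, item~\eqref{conditions_item3} is \cite[Th.~4.4(4)]{BB09a}, and item~\eqref{conditions_item4} is \cite[Lemma~2.5]{BB10a}. Only item~\eqref{conditions_item2} is proved in detail, and the proof uses the explicit formula for the ambiguity function from \cite[Prop.~2.9(1)]{BB10a}. Your proposal instead tries to verify everything from scratch, which is a legitimate goal, but it starts from an \emph{incorrect formula} for the ambiguity function, and this is a genuine gap that propagates through most of the argument.

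Specifically, you write
\[
(\Ac^{\pi_\epsilon,\theta^A}_\phi f)(X,\xi)
=\int_G \overline{\ee^{\ie\epsilon(\xi\circ\log_G+\langle A,\iotaR X\rangle)(y)}}\,f(y)\,\overline{\phi(X^{-1}y)}\,\de y,
\]
i.e., you evaluate the function $\psi:=\xi\circ\log_G+\langle A,\iotaR X\rangle\in\Fc$ pointwise at $y$. But $\pi(\exp_M(\theta^A(X,\xi)))\phi=\ee^{\ie\epsilon\widetilde\psi}\lambda_{\exp_G X}\phi$, where the exponential of $M=\Fc\rtimes_\lambda G$ produces the \emph{path average} $\widetilde\psi=\int_0^1\lambda_{\exp_G(sX)}\psi\,\de s$, so that $\widetilde\psi(y)=\int_0^1\psi\bigl((-sX)\ast y\bigr)\,\de s$. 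This is precisely why the paper (following \cite{BB10a}) writes the formula in terms of $\Psi_X(Y)=\int_0^1 Y\ast(sX)\,\de s$ and the magnetic phase $\tau_A(X,Y)=\exp\bigl(\ie\int_0^1\langle A,\iotaR X\rangle((-sX)\ast Y)\,\de s\bigr)$. The path averaging is not a cosmetic detail: it is what produces the midpoint/circulation structure of the magnetic Weyl calculus (and ultimately its gauge covariance). Your further claim that, after the substitution $y=X\ast z$, the magnetic factor is \emph{independent of the integration variable} is therefore false — $\tau_A(X,\cdot)$ does depend on the second argument. For the orthogonality relations this particular claim is harmless (only unimodularity of $\tau_A$ is used after Plancherel in $\xi$), but for the growth condition and for the Schwartz-kernel argument in item~\eqref{conditions_item3} one genuinely needs the polynomial diffeomorphisms $\Sigma_1,\Sigma_2$ (with $\Sigma_2^{-1}(Y,X)=(\Psi_X^{-1}(-Y),X)$) and the fact that $\tau_A\in\Cpol(\gg\times\gg)$, together with \cite[Lemma~A.2.1(a)]{CG90}. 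Your sketch of item~\eqref{conditions_item4} via reduction to PBW monomials and the polynomiality of $\Ad$ on a nilpotent Lie algebra is reasonable in outline and is essentially what the cited \cite[Lemma~2.5]{BB10a} establishes, but again must be fed the correct $\exp_M$. In short: your high-level strategy (partial Fourier transform in $\xi$, Plancherel, polynomial-diffeo stability of Schwartz space, nilpotent $\Ad$) is the right one and matches the spirit of the references, but the formula you build it on omits the averaging along $s\in[0,1]$, and without that correction the argument does not stand.
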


\begin{proof} 
Throughout the proof we assume $\epsilon=1$ 
and we denote $\pi_1=\pi$ for the sake of simplicity. 
The case of an arbitrary $\epsilon\in\RR\setminus\{0\}$ can be handled by a similar method.
Since $G$ is simply connected, we may assume 
$G=(\gg,\ast)$. 
Then the space of smooth vectors for $\pi_\epsilon$ is equal to $\Sc(\gg)$ by Theorem~\ref{smooth_vect}.

\eqref{conditions_item1} 
The assertion follows by \cite[Th.~2.8(1)]{BB10a}. 
 
\eqref{conditions_item2}
To check the growth condition \eqref{wigner_def_eq0}
we shall denote for every $X\in\gg$, 
$$\Psi_X\colon\gg\to\gg, \quad \Psi_X(Y)=\int\limits_0^1Y\ast(sX)\de s$$
and also 
$$\tau_A(X,Y)=\exp\Bigl(\ie\int\limits_0^1
\langle A,\iotaR X\rangle((-sX)\ast Y)\de s\Bigr)$$
for $X,Y\in\gg$. 
It then follows by \cite[Prop.~2.9(1)]{BB10a} that for every $f,\phi\in\Sc(\gg)$ we have 
$$(\Ac^{\pi,\theta^A}_\phi f)(X,\xi)=\hskip-2pt\int\limits_{\gg} 
 \ee^{\ie\hake{\xi,Y}}\overline{\tau_A(X,-\Psi_X^{-1}(Y))} 
f(-\Psi_X^{-1}(Y))\overline{\phi((-X)\ast (-\Psi_X^{-1}(Y)))}\,\de Y. 
$$
Therefore the function $\Ac^{\pi,\theta^A}_\phi f\colon\gg\times\gg^*\to\CC$ 
is a partial inverse Fourier transform of the function defined on 
$\gg\times\gg$ by 
$$(X,Y)\mapsto\overline{\tau_A(X,-\Psi_X^{-1}(Y))}f(-\Psi_X^{-1}(Y))\overline{\phi((-X)\ast (-\Psi_X^{-1}(Y)))}\colon\gg\to\CC.$$
On the other hand, it was noted in the proof of \cite[Th.~4.4(4)]{BB09a} 
that each of the mappings 
$\Sigma_1,\Sigma_2\colon\gg\times\gg\to\gg\times\gg$ are defined by 
$$
\Sigma_1(Y,Z)=(-Y,Y\ast(-Z))
\quad\text{and}\quad 
\Sigma_2(V,W)=(-\Psi_W(V),W).
$$ 
is a polynomial diffeomorphisms whose inverse is a polynomial. 
Since 
$$\Sigma_2^{-1}(Y,X)=(\Psi_X^{-1}(-Y),X)$$ 
and 
$\tau_A\in\Cpol(\gg\times\gg)$, it then easily follows by \cite[Lemma~A.2.1(a)]{CG90} 
that we have a well-defined continuous sesquilinear mapping 
$$\Sc(\gg)\times\Sc(\gg)\to\Sc(\gg\times\gg^*),\quad 
(f,\phi)\mapsto\Ac^{\pi,\theta^A}_\phi f. $$
Thus the representation $\pi$ satisfies the growth condition along the mapping~$\theta^A$.

\eqref{conditions_item3}
Use the above Assertion~\eqref{conditions_item3} along with~\cite[Th.~4.4(4)]{BB09a}. 

\eqref{conditions_item4}
The assertion follows as a direct consequence of \cite[Lemma~2.5]{BB10a}.
\end{proof}


In the next corollaries we denote by $\pi$ the representation 
$\pi_\epsilon$ in Theorem~\ref{smooth_vect} for $\epsilon=1$. 
Recall that we work with a finite-dimensional simply connected nilpotent Lie group $G$ with an admissible function space $\Fc$ such that 
there exist the continuous inclusion maps 
$\gg^*\hookrightarrow\Fc\hookrightarrow\Cpol(G)$, 
where the embedding $\gg^*\hookrightarrow\Fc$ is given by $\xi\mapsto\xi\circ\log_G$. 
Moreover $M=\Fc\rtimes_\lambda G$, and the aforementioned unitary representation 
$\pi\colon M\to\Bc(L^2(G))$ is defined by 
$\pi(\phi,g)f=\ee^{\ie\phi}\lambda_g f$ 
for all $\phi\in\Fc$, $g\in G$, and $f\in L^2(G)$. 

If we have $A\in\Omega^1(G)$ such that   
$\langle A,\iotaR X\rangle\in\Fc$ whenever $X\in\gg$,  
and we define the linear mapping 
$$\theta^A\colon\gg\times\gg^*\to\mg=\Fc\ltimes_{\dot\lambda}\gg,\quad 
(X,\xi)\mapsto(\xi\circ\log_G+\langle A,\iotaR X\rangle,X)$$
as in Corollary~\ref{conditions}, then one can consider 
the modulation spaces of symbols for the localized Weyl calculus 
for the representation $\pi$ along the linear mapping $\theta^A$. 
These are just the modulation spaces for the representation 
$\pi^\#\colon M\ltimes M\to\Bc(L^2(\gg\times\gg^*))$ 
with respect to the linear mapping $(\theta^A,\theta^A)\colon (\gg\times\gg^*)\times(\gg\times\gg^*) 
\to \mg\ltimes\mg$. 
It follows by Remark~\ref{moyal_rem} that for arbitrary $\Phi\in\Sc(\gg\times\gg^*)$ and $F\in\Sc'(\gg\times\gg^*)$ the corresponding ambiguity function 
$\Ac^{\pi^\#,\theta^A\times\theta^A}_\Phi F\colon(\gg\times\gg^*)\times(\gg\times\gg^*)\to\CC$
is given by the formula 
$$\begin{aligned}
(\Ac^{\pi^\#,\theta^A\times\theta^A}_\Phi F) & ((X_1,\xi_1),(X_2,\xi_2)) \\
&=(\pi^\#(\exp_{M\ltimes M}(\theta^A(X_1,\xi_1),\theta^A(X_2,\xi_2))F
\mid\Phi)_{L^2(\gg\times\gg^*)} \\
&=
\iint\limits_{\gg\times\gg^*}
(\ee^{\ie\langle\cdot,(X_1+X_2,\xi_1+\xi_2)\rangle}\#^{\theta^A} F\#^{\theta^A} \ee^{-\ie\langle\cdot,(X_1,\xi_1)\rangle})\overline{\Phi(\cdot)} 
\end{aligned}
$$
where $\#^{\theta^A}$ stands for the Moyal product on $\gg\times\gg^*$ 
defined by means of the magnetic potential $A$. 
For $r,s\in[1,\infty]$ and the window function $\Phi\in\Sc(\gg\times\gg^*)$ 
we have the modulation space of symbols 
$$M^{r,s}_\Phi(\pi^\#,\theta^A\times\theta^A)
=\{F\in\Sc'(\gg\times\gg^*)\mid \Ac^{\pi^\#,\theta^A\times\theta^A}_\Phi F
\in L^{r,s}((\gg\times\gg^*)\times(\gg\times\gg^*))\}.$$ 

\begin{corollary}\label{C2_mag}
In the above setting, 
pick $\phi_1,\phi_2\in\Sc(G)\setminus\{0\}$.   
If $r,s,r_1,s_1,r_2,s_2\in[1,\infty]$ satisfy the conditions  
$$r\le s, \quad r_2,s_2\in[r,s],\quad\text{and}\quad
\frac{1}{r_1}-\frac{1}{r_2}=\frac{1}{s_1}-\frac{1}{s_2}=1-\frac{1}{r}-\frac{1}{s},$$
then for every symbol 
$a\in M^{r,s}_{\Wig(\phi_1,\phi_2)}(\pi^\#,\theta^A\times\theta^A)$ we have 
a bounded linear operator 
$$\Op^{\theta^A}(a)\colon M^{r_1,s_1}_{\phi_1}(\pi,\theta^A)\to M^{r_2,s_2}_{\phi_2}(\pi,\theta^A).$$ 
Moreover, the linear mapping 
$$\Op^{\theta^A}\colon  M^{r,s}_{\Wig(\phi_1,\phi_2)}(\pi^\#,\theta^A\times\theta^A)
\to\Bc(M^{r_1,s_1}_{\phi_1}(\pi,\theta^A),M^{r_2,s_2}_{\phi_2}(\pi,\theta^A))$$ 
is continuous. 
\end{corollary}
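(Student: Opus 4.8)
The plan is to deduce Corollary~\ref{C2_mag} directly from the abstract Corollary~\ref{C2} by verifying that the pair $(\pi,\theta^A)$ satisfies the two hypotheses of that corollary. This verification is exactly the content of Corollary~\ref{conditions}: assertion~\eqref{conditions_item1} gives the orthogonality relations along $\theta^A$, assertion~\eqref{conditions_item2} gives the growth condition, and assertion~\eqref{conditions_item3} states that the localized Weyl calculus for $\pi$ along $\theta^A$ is regular. It remains only to check the density condition along $\theta^A$, which follows from Proposition~\ref{unitary}: by Corollary~\ref{conditions}\eqref{conditions_item3} the operator $\Op^{\theta^A}$ is a unitary operator $L^2(\gg\times\gg^*)\to\Sg_2(L^2(G))$, so the equivalence \eqref{unitary_item1}$\Leftrightarrow$\eqref{unitary_item2} in Proposition~\ref{unitary} yields the density condition for free.

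Once these hypotheses are in place, I would simply invoke Corollary~\ref{C2} with the group $M=\Fc\rtimes_\lambda G$, the Hilbert space $\Hc=L^2(G)$, the finite-dimensional vector spaces $\Xi=\gg\times\gg^*$ and $\Xi^*=\gg^*\times\gg$ (with the Lebesgue measures normalized so the Fourier transform is unitary, as required in Setting~\ref{loc0}), and the linear mapping $\theta=\theta^A$. The window vectors $\phi_1,\phi_2\in\Sc(G)\setminus\{0\}$ are admissible since $\Sc(G)=\Hc_\infty$ by Theorem~\ref{smooth_vect}. The numerical hypotheses $r\le s$, $r_2,s_2\in[r,s]$, and $\frac{1}{r_1}-\frac{1}{r_2}=\frac{1}{s_1}-\frac{1}{s_2}=1-\frac{1}{r}-\frac{1}{s}$ are identical to those in Corollary~\ref{C2}, so the conclusion transfers verbatim: $\Op^{\theta^A}(a)$ is a bounded linear operator from $M^{r_1,s_1}_{\phi_1}(\pi,\theta^A)$ to $M^{r_2,s_2}_{\phi_2}(\pi,\theta^A)$ for every symbol $a\in M^{r,s}_{\Wig(\phi_1,\phi_2)}(\pi^\#,\theta^A\times\theta^A)$, and the assignment $a\mapsto\Op^{\theta^A}(a)$ is continuous.

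There is essentially no obstacle here: the corollary is a pure specialization of the abstract machinery already built in Sections~\ref{Sect2} and~\ref{localized}, and all the work has been front-loaded into Corollary~\ref{conditions}. The only point requiring a sentence of care is the bookkeeping identification of $\pi^\#$ and $\theta^A\times\theta^A$: one must note that the modulation spaces $M^{r,s}_{\Wig(\phi_1,\phi_2)}(\pi^\#,\theta^A\times\theta^A)$ appearing in the statement are precisely the modulation spaces of the representation $\pi^\#\colon M\ltimes M\to\Bc(L^2(\gg\times\gg^*))$ with respect to the linear mapping $\theta^A\times\theta^A\colon(\gg\times\gg^*)\times(\gg\times\gg^*)\to\mg\ltimes\mg$, as already spelled out in the discussion preceding the corollary, so that the hypothesis of Corollary~\ref{C2} on the symbol class matches the hypothesis here. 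With that identification recorded, the proof is complete in a few lines.

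\begin{proof}
By Corollary~\ref{conditions}, the representation $\pi$ satisfies both the orthogonality relations \eqref{conditions_item1} and the growth condition \eqref{conditions_item2} along $\theta^A$, and by \eqref{conditions_item3} the localized Weyl calculus for $\pi$ along $\theta^A$ is regular and defines a unitary operator $\Op^{\theta^A}\colon L^2(\gg\times\gg^*)\to\Sg_2(L^2(G))$. In view of the latter fact, the implication \eqref{unitary_item2}$\Rightarrow$\eqref{unitary_item1} of Proposition~\ref{unitary} shows that $\pi$ also satisfies the density condition along $\theta^A$. Thus both hypotheses of Corollary~\ref{C2} are satisfied for the representation $\pi\colon M\to\Bc(L^2(G))$, the linear mapping $\theta^A\colon\gg\times\gg^*\to\mg$, and the window vectors $\phi_1,\phi_2\in\Sc(G)\setminus\{0\}=\Hc_\infty\setminus\{0\}$ (see Theorem~\ref{smooth_vect}). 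Applying Corollary~\ref{C2} with the given exponents $r,s,r_1,s_1,r_2,s_2$, which by assumption satisfy $r\le s$, $r_2,s_2\in[r,s]$, and $\frac{1}{r_1}-\frac{1}{r_2}=\frac{1}{s_1}-\frac{1}{s_2}=1-\frac{1}{r}-\frac{1}{s}$, we conclude that for every symbol $a\in M^{r,s}_{\Wig(\phi_1,\phi_2)}(\pi^\#,\theta^A\times\theta^A)$ the operator $\Op^{\theta^A}(a)$ maps $M^{r_1,s_1}_{\phi_1}(\pi,\theta^A)$ boundedly into $M^{r_2,s_2}_{\phi_2}(\pi,\theta^A)$, and the linear mapping $a\mapsto\Op^{\theta^A}(a)$ is continuous from $M^{r,s}_{\Wig(\phi_1,\phi_2)}(\pi^\#,\theta^A\times\theta^A)$ into $\Bc(M^{r_1,s_1}_{\phi_1}(\pi,\theta^A),M^{r_2,s_2}_{\phi_2}(\pi,\theta^A))$.
\end{proof}
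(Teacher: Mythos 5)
Your proof is correct and takes essentially the same route as the paper: verify the hypotheses of Corollary~\ref{C2} for $(\pi,\theta^A)$ by appealing to Theorem~\ref{smooth_vect} and Corollary~\ref{conditions}, then invoke Corollary~\ref{C2}. You are slightly more careful than the paper in one spot: Corollary~\ref{conditions} only asserts regularity and unitarity of $\Op^{\theta^A}$, not the density condition itself, and you correctly fill this small gap by running the equivalence \eqref{unitary_item2}$\Rightarrow$\eqref{unitary_item1} of Proposition~\ref{unitary}, a step the paper's three-line proof leaves implicit.
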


\begin{proof} 
It follows by Theorem~\ref{smooth_vect} that the space of smooth vectors for the representation $\pi$ is the Schwartz space $\Sc(G)$. 
Moreover, Corollary~\ref{conditions} shows that we can apply 
Corollary~\ref{C2} for the representation $\pi$. 
Now the conclusion follows by using the latter corollary. 
\end{proof}

\begin{corollary}\label{C3_mag}
Assume the setting of Corollary~\ref{conditions},  
let $\phi_1,\phi_2\in\Sc(G)\setminus\{0\}$, 
and $r,s\in[1,\infty]$ such that $\frac{1}{r}+\frac{1}{s}=1$.  
Then for every $a\in M^{r,s}_{\Wig(\phi_1,\phi_2)}(\pi^{\#},\theta^A\times\theta^A)$ we have 
$\Op^{\theta^A}(a)\in\Bc(L^2(G))$.   
Moreover, $\Op^{\theta^A}\colon M^{r,s}_{\Wig(\phi_1,\phi_2)}(\pi^\#,\theta^A\times\theta^A)\to\Bc(L^2(G))$ 
is a continuous linear mapping. 
\end{corollary}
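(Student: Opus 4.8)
The plan is to obtain Corollary~\ref{C3_mag} as a specialization of Corollary~\ref{C2_mag} (equivalently, of Corollary~\ref{C2}), choosing $r_1=s_1=r_2=s_2=2$ so that the two modulation spaces between which $\Op^{\theta^A}(a)$ acts both collapse to $L^2(G)$, and then using the mixed-norm continuity estimate already proved in Corollary~\ref{C2_mag}.

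First I would record that the standing hypotheses needed are in force here: we are in the setting of Corollary~\ref{conditions}, so $\pi$ satisfies the density condition and the orthogonality relations along $\theta^A$ and the localized Weyl calculus for $\pi$ along $\theta^A$ is regular, while by Theorem~\ref{smooth_vect} the space of smooth vectors for $\pi$ is $\Sc(G)$, so the windows satisfy $\phi_1,\phi_2\in\Hc_\infty\setminus\{0\}$ as required. Next I would invoke Remark~\ref{mod_L2} — applicable precisely because $\pi$ satisfies the orthogonality relations along $\theta^A$ — to identify $M^{2,2}_{\phi_j}(\pi,\theta^A)=\Hc=L^2(G)$ for $j=1,2$, for any direct sum decomposition of $\gg\times\gg^*$.

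The core step is then a bookkeeping check of the index constraints in Corollary~\ref{C2_mag}. Assuming first $r\le s$ (hence, since $\frac1r+\frac1s=1$, automatically $r\le 2\le s$), with $r_1=s_1=r_2=s_2=2$ one has $\frac1{r_1}-\frac1{r_2}=\frac1{s_1}-\frac1{s_2}=0=1-\frac1r-\frac1s$ and $r_2=s_2=2\in[r,s]$, so all the conditions of Corollary~\ref{C2_mag} are met. That corollary then yields a bounded operator $\Op^{\theta^A}(a)\colon L^2(G)\to L^2(G)$ depending continuously on $a\in M^{r,s}_{\Wig(\phi_1,\phi_2)}(\pi^\#,\theta^A\times\theta^A)$, which is exactly the assertion. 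For the complementary case $r>s$ (where $\frac1r+\frac1s=1$ forces $s<2<r$) I would pass to adjoints: one has $\Op^{\theta^A}(a)^*=\Op^{\theta^A}(\bar a)$, since $\pi(\exp_M(\theta^A(\cdot)))$ is unitary and $\theta^A$ is linear, and the symmetry relation $\overline{(\Ac^{\pi,\theta^A}_\phi f)(X)}=(\Ac^{\pi,\theta^A}_f\phi)(-X)$ recorded in \eqref{wigner_cont_th_Delta} — transported to $\pi^\#$ via Proposition~\ref{loc7} — identifies $\bar a$, up to a norm-preserving change of variables, with an element of the modulation space having the two mixed-norm exponents and the two windows interchanged; since $L^2(G)$-boundedness is stable under taking adjoints, the already-treated case $r\le s$ then applies.

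I expect this last step — the precise interaction of complex conjugation of the symbol with $\Op^{\theta^A}$ and with the ambiguity function of the doubled representation $\pi^\#$ — to be the only genuinely non-automatic point; everything else is a direct citation of Corollary~\ref{conditions}, Theorem~\ref{smooth_vect}, Remark~\ref{mod_L2}, Proposition~\ref{loc7} and Corollary~\ref{C2_mag}. If one is content to state the result only for $r\le s$, this last step is unnecessary and the proof reduces to the one-line specialization of Corollary~\ref{C2_mag} described above.
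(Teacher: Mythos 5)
The core of your argument---specialize Corollary~\ref{C2_mag} with $r_1=s_1=r_2=s_2=2$ and use Remark~\ref{mod_L2} to identify $M^{2,2}_{\phi_j}(\pi,\theta^A)=L^2(G)$---is exactly the paper's proof. You are also right that Corollary~\ref{C2_mag} carries the hypothesis $r\le s$, which, together with $\tfrac1r+\tfrac1s=1$, forces $r\le2\le s$, so the paper's one-line argument silently stays in that regime even though the statement as written allows $r>s$. That is a real gap in the paper's exposition, and you were right to flag it.

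Your proposed repair for $r>s$ via adjoints, however, does not go through. The identity $\Op^{\theta^A}(a)^*=\Op^{\theta^A}(\bar a)$ is correct, but conjugating the symbol does \emph{not} interchange the two mixed-norm exponents of the symbol modulation space. Using $\overline{\pi^\#(n_1,n_2)\Phi}=\pi^\#(n_1n_2,\,n_2^{-1})\overline{\Phi}$ together with the formula for $\exp_{M\ltimes M}$ from Lemma~\ref{loc3}\eqref{loc3_item2}, one computes
\begin{equation*}
\bigl(\Ac^{\pi^\#,\theta^A\times\theta^A}_{\Phi}\bar a\bigr)(X,Y)
=\overline{\bigl(\Ac^{\pi^\#,\theta^A\times\theta^A}_{\overline{\Phi}}a\bigr)(X+Y,-Y)},
\end{equation*}
and the substitution $(X,Y)\mapsto(X+Y,-Y)$ is, for each fixed $Y$, a translation in the first variable followed by a reflection in the second, so it leaves $\Vert\cdot\Vert_{L^{r,s}}$ invariant with the \emph{same} pair $(r,s)$. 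Hence $\Vert\bar a\Vert_{M^{r,s}_{\Phi}(\pi^\#,\theta^A\times\theta^A)}=\Vert a\Vert_{M^{r,s}_{\overline{\Phi}}(\pi^\#,\theta^A\times\theta^A)}$: conjugation changes the window but not the exponents. The symmetry \eqref{wigner_cont_th_Delta} likewise exchanges symbol and window without touching $(r,s)$. So neither manipulation drops you back into a space with exponents in increasing order, and the case $r>s$ is not reduced to the case already handled. The honest conclusion is the one you offer as a fallback: state (and prove) the corollary under the hypothesis $r\le s$, i.e.\ $1\le r\le2\le s\le\infty$ with $\tfrac1r+\tfrac1s=1$, for which the one-line specialization of Corollary~\ref{C2_mag} is a complete proof and matches the paper.
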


\begin{proof}
This is the special case of Corollary~\ref{C2_mag} with with 
$r_1=s_1=r_2=s_2=2$, since  
Remark~\ref{mod_L2} shows that $M^{2,2}_{\phi_j}(\pi,\theta^A)=L^2(G)$ 
for $j=1,2$. 
\end{proof}

\begin{corollary}\label{C4_mag}
Assume the setting of Corollary~\ref{conditions} and  
let $\phi_1,\phi_2\in\Sc(G)\setminus\{0\}$.  
Then for every $a\in M^{1,1}_{\Wig(\phi_1,\phi_2)}(\pi^{\#},\theta^A\times\theta^A)$ we have 
$\Op^\theta(a)\in\Sg_1(L^2(G))$, and  
the linear mapping 
$\Op^{\theta^A}\colon M^{1,1}_{\Wig(\phi_1,\phi_2)}(\pi^\#,\theta^A\times\theta^A)\to\Sg_1(L^2(G))$ is continuous. 
\end{corollary}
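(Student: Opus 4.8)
The plan is to obtain Corollary~\ref{C4_mag} as the specialization of the abstract trace-class result, Proposition~\ref{trace_class}, to the magnetic representation $\pi$ and the linear mapping $\theta^A$. Thus the whole task reduces to checking that all the hypotheses of Proposition~\ref{trace_class} are satisfied in the present situation, and this is precisely what Corollary~\ref{conditions} (together with Theorem~\ref{smooth_vect}) was designed to provide.

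First I would reduce to normalized window vectors. Given arbitrary $\phi_1,\phi_2\in\Sc(G)\setminus\{0\}$, replacing $\phi_j$ by $\phi_j/\Vert\phi_j\Vert$ rescales $\Wig(\phi_1,\phi_2)$ by the positive scalar $1/(\Vert\phi_1\Vert\,\Vert\phi_2\Vert)$ and rescales the mixed norm of every ambiguity function $\Ac^{\pi^\#,\theta^A\times\theta^A}_{\Wig(\phi_1,\phi_2)}a$ by the same factor. Hence the space $M^{1,1}_{\Wig(\phi_1,\phi_2)}(\pi^\#,\theta^A\times\theta^A)$ is unchanged as a set and its norm is only multiplied by a fixed positive constant, so it suffices to treat the case $\Vert\phi_j\Vert=1$; the continuity constant is then adjusted at the end by this scalar factor.

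Next I would collect the required properties from earlier results. By Theorem~\ref{smooth_vect} the space of smooth vectors of $\pi$ is $\Sc(G)$, so indeed $\phi_1,\phi_2\in\Hc_\infty$. Corollary~\ref{conditions}\eqref{conditions_item1} gives the orthogonality relations of $\pi$ along $\theta^A$, and Corollary~\ref{conditions}\eqref{conditions_item3} gives that the localized Weyl calculus for $\pi$ along $\theta^A$ is regular and that $\Op^{\theta^A}$ is a unitary operator $L^2(\gg\times\gg^*)\to\Sg_2(L^2(G))$; by Proposition~\ref{unitary} the latter is equivalent to the density condition. Hence $\pi$ satisfies both the density condition and the orthogonality relations along $\theta^A$ (hypothesis (1) of Proposition~\ref{trace_class}) and its localized Weyl calculus along $\theta^A$ is regular (hypothesis (2)). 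It remains to verify the polynomial-growth requirement on the window vectors, namely that for every $u\in\U(\mg_{\CC})$ the function $\Vert\de\pi(u)\pi(\exp_M(\theta^A(\cdot)))\phi_j\Vert$ has polynomial growth on $\gg\times\gg^*$. Using the intertwining identity $\Vert\de\pi(u)\pi(g)\phi\Vert=\Vert\de\pi(\Ad_{\U(\mg_{\CC})}(g)u)\phi\Vert$ already exploited in Remark~\ref{ad-remark} (legitimate since $\pi(g)$ is unitary), this is exactly the content of Corollary~\ref{conditions}\eqref{conditions_item4} applied to $\phi=\phi_j\in\Sc(G)$; the fact that $\theta^A$ is linear makes the sign changes $X\mapsto-X$ coming from passing to inverses harmless, since polynomial growth on $\gg\times\gg^*$ is invariant under $(X,\xi)\mapsto(-X,-\xi)$.

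With all hypotheses in place, I would simply invoke Proposition~\ref{trace_class} to conclude that $\Op^{\theta^A}(a)\in\Sg_1(L^2(G))$ for every $a\in M^{1,1}_{\Wig(\phi_1,\phi_2)}(\pi^\#,\theta^A\times\theta^A)$ and that $\Op^{\theta^A}\colon M^{1,1}_{\Wig(\phi_1,\phi_2)}(\pi^\#,\theta^A\times\theta^A)\to\Sg_1(L^2(G))$ is continuous, then undo the normalization. There is no substantial obstacle beyond bookkeeping; the only mildly delicate point is matching the polynomial-growth hypothesis of Proposition~\ref{trace_class}, phrased via $\de\pi(u)\pi(\exp_M(\theta^A(\cdot)))\phi_j$, with the $\Ad$-twisted form appearing in Corollary~\ref{conditions}\eqref{conditions_item4}, which the identity from Remark~\ref{ad-remark} handles cleanly.
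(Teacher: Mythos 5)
Your proof is correct and follows the same route as the paper's own (one-line) proof: verify the hypotheses of Proposition~\ref{trace_class} via Theorem~\ref{smooth_vect} and Corollary~\ref{conditions}, then invoke that proposition. You supply more detail than the paper does, in particular the harmless normalization of the windows and the reconciliation of the polynomial-growth hypothesis in Proposition~\ref{trace_class} with the $\Ad$-twisted version stated in Corollary~\ref{conditions}\eqref{conditions_item4}, both of which are legitimate small gaps in the paper's terse proof that you fill in correctly.
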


\begin{proof}
Recall from Theorem~\ref{smooth_vect} that the space of smooth vectors for the representation $\pi$ is the Schwartz space $\Sc(G)$. 
Moreover, Corollary~\ref{conditions} shows that we can  
use Proposition~\ref{trace_class}, and the conclusion follows. 
\end{proof}

\begin{remark}\label{comparison}
\normalfont
In the special case when $G$ is the abelian group $(\RR^n,+)$ and we have the magnetic potential $A\in\Omega^1(\RR^n)$, 
the magnetic Weyl calculus 
$$\Op^{\theta^A}\colon\Sc'(\RR^n\times(\RR^n)^*)\to\Lc(\Sc(\RR^n),\Sc'(\RR^n))$$
is just the one constructed in \cite{MP04}. 
In this setting, we note the following: 
\begin{enumerate}
\item 
In the case when the coefficients of the magnetic field $B:=\de A\in\Omega^2(\RR^n)$ belong to the Fr\'echet space $\text{BC}^\infty(\RR^n)$ 
of smooth functions on $\RR^n$ which are bounded along with all of their partial derivatives, 
one established in \cite{IMP07} some 
sufficient conditions on a symbol $a\in\Sc'(\RR^n\times(\RR^n)^*)$ 
that ensure that the magnetic pseudo-differential operator $\Op^{\theta^A}(a)$ 
is bounded on $L^2(\RR^n)$. 
In this connection, we note that the previous Corollary~\ref{C3_mag} 
provides another type of sufficient conditions for $L^2$-boundedness 
when the coefficients of the magnetic field $B$ belong to the larger LF-space $\Cpol(\RR^n)$
of smooth functions on $\RR^n$ that grow polynomially together with 
their partial derivatives of arbitrary order. 
This follows since for every closed 2-form $B\in\Omega^2(\RR^n)$ 
whose coefficients belong to $\Cpol(\RR^n)$, 
one can construct in the usual way a 1-form $A\in\Omega^1(\RR^n)$ 
whose coefficients belong to $\Cpol(\RR^n)$ again such that $\de A=B$.
\item 
It follows by the comments preceding Corollary~\ref{C2_mag} that the modulation spaces of symbols $M^{r,s}_\Phi(\pi^\#,\theta^A\times\theta^A)$ 
can be alternatively described in terms of the modulation mapping 
which was introduced in \cite{MP09} in the case of the abelian group $G=(\RR^n,+)$ 
by using the magnetic Moyal product $\#^A$. 
It had been already noted in \cite{MP04} that the magnetic Moyal product 
on $(\RR^n,+)$ actually depends only on the magnetic field $B=\de A$. 
This assertion holds true for the two-step nilpotent Lie groups, as an easy consequence of the formula established in Th.~4.7 in \cite{BB09a}. 
\end{enumerate}
\qed
\end{remark}

\noindent\textbf{Acknowledgment.} 
The second-named author acknowledges partial financial support from the Project MTM2007-61446, DGI-FEDER, of the MCYT, Spain.

\end{document}